\newtheorem{theorem}{Theorem}[section]
\newtheorem{lemma}[theorem]{Lemma}
\newtheorem{cor}[theorem]{Corollary}
\newtheorem{prop}[theorem]{Proposition}
\theoremstyle{remark}
\newtheorem{remark}{Remark}
\theoremstyle{example}
\theoremstyle{definition}
\title{Curves homogeneous under analytic transformations}
\author{Giuseppe Della Sala}
\email{giuseppe.dellasala@univie.ac.at}\address{Fakult\"at f\"ur Mathematik \\ Universit\"at Wien}\address{Oskar-Morgenstern-Platz 1 \\ 1090 Wien \\ Austria}
\subjclass[2010]{32V40, 32M25}
\keywords{Real-analytic diffeomorphism, equivalence locus, homogeneity}
\thanks{The author was supported by the START Prize Y377 of the Austrian Federal Ministry of Science and Research bmwf. The author was also partially supported by the Austrian Science Fund FWF grant P24878 N25.}
\begin{document}
\begin{abstract} We call a subset $K$ of $\mathbb C$ \emph{biholomorphically homogeneous} if for any two points $p,q\in K$ there exists a neighborhood $U$ of $p$ and a biholomorphism $\psi:U\to \psi(U)\subset \mathbb C$ such that $\psi(p)=q$ and $\psi(K\cap U)= K\cap \psi(U)$. We show that a biholomorphically homogeneous smooth curve $\gamma\subset \mathbb C$ is necessarily real-analytic.  
Furthermore we show that the same holds for the homogeneity with respect of a wider class of groups $G$ of real-analytic transformations of the plane. 
\end{abstract}

\maketitle

\section{Introduction}

Let $K$ be a subset of $\mathbb C$. We say that $K$ is \emph{locally biholomorphically homogeneous} if for any two points $p,q\in K$ there exist a neighborhood $U$ of $p$ in $\mathbb C$ and a biholomorphism $\psi:U\to \psi(U)\subset \mathbb C$, 
such that $\psi(U\cap K) = K\cap \psi(U)$ and $\psi(p) = q$.  One of the main results of the paper is the following:

\begin{theorem}\label{holhomo}
Let $\gamma\subset \mathbb C$ be a real $1$-dimensional curve of class $C^\infty$. Then $\gamma$ is locally biholomorphically homogeneous if and only if it is real-analytic.
\end{theorem}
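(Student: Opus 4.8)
\emph{Proof idea.} The \emph{if} direction is the easy one: a regular real-analytic arc admits a real-analytic regular parametrization $t\mapsto \zeta(t)$ which extends to a holomorphic map $\zeta$ on a neighborhood of $0$ in $\mathbb C$ with $\zeta'(0)\neq 0$; thus $\zeta^{-1}$ is a local biholomorphism straightening $\gamma$ to a segment of $\mathbb R$, and the translations of $\mathbb R$ pulled back by $\zeta$ realize the homogeneity. So I will concentrate on the \emph{only if} direction. The first step is a reduction. Let $A\subseteq\gamma$ be the set of points near which $\gamma$ is real-analytic. This set is open in $\gamma$ by definition, and it is \emph{invariant} under the homogeneity: if $p\in A$ and $\psi$ is a local biholomorphism with $\psi(p)=q$ and $\psi(\gamma)=\gamma$ near $q$, then near $q$ the curve $\gamma=\psi(\gamma)$ is the image of a real-analytic set under a biholomorphism, hence real-analytic, so $q\in A$. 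Since the homogeneity relates any two points of $\gamma$, the locus $A$ is either empty or all of $\gamma$. Thus it suffices to prove that $\gamma$ is real-analytic at a \emph{single} point $p_0$.

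The mechanism I would use to produce analyticity at $p_0$ is a holomorphic vector field tangent to $\gamma$. Suppose one can find a family $(f_t)_{t\in(-\delta,\delta)}$ of biholomorphisms on a fixed neighborhood $U$ of $p_0$, with $f_0=\mathrm{id}$, $f_t(\gamma\cap U)\subseteq\gamma$, and $t\mapsto f_t(z)$ differentiable at $t=0$, such that the tangent vector $X(z):=\frac{d}{dt}\big|_{t=0}f_t(z)$ does not vanish at $p_0$. Then $X$ is holomorphic in $z$, and differentiating the constraint $f_t(w)\in\gamma$ at $t=0$ for $w\in\gamma$ shows that $X(w)$ is tangent to $\gamma$ at each $w\in\gamma\cap U$. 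Consequently the real vector field $\mathrm{Re}\,X$ on $\mathbb R^2\cong\mathbb C$ is \emph{real-analytic} (being the real part of a holomorphic field), nonvanishing near $p_0$, and everywhere tangent to $\gamma$; hence $\gamma$ coincides near $p_0$ with the integral curve of $\mathrm{Re}\,X$ through $p_0$, which is real-analytic. This yields analyticity at $p_0$ and hence, by the reduction above, everywhere on $\gamma$.

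The main obstacle is therefore the construction of such a differentiable family out of the bare, pointwise homogeneity, which a priori provides the maps $\psi_q$ (with $\psi_q(p_0)=q$) without any control of their dependence on $q$. Writing $\gamma$ locally as a graph $\{x+ih(x)\}$ with $h(0)=h'(0)=0$, the $\gamma$-preservation condition for a holomorphic $\psi$ reads
\[
\mathrm{Im}\,\psi\bigl(x+ih(x)\bigr)=h\bigl(\mathrm{Re}\,\psi(x+ih(x))\bigr).
\]
The plan is to first normalize: composing with the stabilizer of $p_0$ one checks that a $\gamma$-preserving biholomorphism fixing $p_0$ acts on the tangent line by a real scaling, so one can pin down the $1$-jet of $\psi_q$ (say $\psi_q(p_0)=q$ and $\psi_q'(p_0)>0$ matching arclength) and thereby make the choice of $\psi_q$ essentially rigid. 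One then extracts regularity in $q$ by a normal-families argument — the normalized maps $\psi_q$ form a normal family converging to $\mathrm{id}$ as $q\to p_0$ — and bootstraps continuity to differentiability using the holomorphicity of the $\psi_q$ together with the defining relation above and the implicit function theorem. Differentiating the resulting path at $p_0$ then produces the vector field $X$ with $X(p_0)\neq0$.

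I expect this regularity upgrade — turning topological homogeneity into an infinitesimal, vector-field statement while keeping the stabilizer under control — to be the genuinely hard part of the argument. An alternative route, should the family prove awkward to construct directly, would be to bypass the vector field entirely and instead show, via the transformation rule above, that the homogeneity pins down the high-order jets of $\gamma$ at $p_0$ in terms of finitely many invariants, forcing analytic Cauchy-type bounds $|h^{(n)}(0)|\le C^{n}n!$ and hence real-analyticity of $h$.
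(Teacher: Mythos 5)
Your blueprint --- reduce to analyticity at a single point, then exhibit $\gamma$ as an integral curve of a non-vanishing holomorphic vector field tangent to it --- is exactly the strategy of the paper, and both your \emph{if} direction and the invariance-of-the-analytic-locus reduction are correct. The genuine gap is the normalization step: you claim that pinning down the $1$-jet ($\psi_q(p_0)=q$, $\psi_q'(p_0)>0$) makes the choice of $\psi_q$ ``essentially rigid.'' It is true that a $\gamma$-preserving isotropy at $p_0$ has real derivative, and sign normalization disposes of involutions; but it does \emph{not} exclude parabolic isotropies $\phi(z)=z+az^{k+1}+\cdots$ with $\phi'(p_0)=1$, and two admissible candidates for $\psi_q$ may differ by precisely such a germ, which no finite-jet normalization can remove. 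Ruling these out is the heart of the whole proof. The paper's Lemma \ref{leaufatou} shows, via the Leau--Fatou flower theorem and a delicate estimate on the derivative of the Fatou coordinate along an orbit, that a non-involutive holomorphic isotropy forces $\gamma\setminus\{p_0\}$ to be real-analytic (the hyperbolic case $0<\lambda<1$ being handled by linearization); hence a nowhere-analytic curve has isotropy group of order at most $2$ (Lemma \ref{onlyone}). Without this dynamical input your ``rigidity,'' and with it the uniqueness and continuity of $q\mapsto\psi_q$ on which everything downstream depends, is unproven --- and it is not provable by jet normalization alone.

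Two further steps in your plan are asserted where substantive work is required. First, your normal-families argument presupposes that the $\psi_q$ are defined on a \emph{common} neighborhood with uniform bounds, whereas homogeneity only provides domains shrinking with $q$; the paper needs a Baire-category argument (Lemma \ref{Baire}) to select a uniformly bounded family on a fixed ball near a suitable base point. Second, the upgrade from ``normal family converging to $\mathrm{id}$'' to a $t$-differentiable path $f_t$ with non-vanishing $\frac{d}{dt}\big|_{t=0}f_t$ is exactly where your implicit-function-theorem bootstrap has no traction: the graph relation $\mathrm{Im}\,\psi(x+ih(x))=h(\mathrm{Re}\,\psi(x+ih(x)))$ involves the merely smooth (indeed possibly nowhere-analytic) $h$, so it yields no differentiability in the parameter. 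The paper circumvents this entirely: once the isotropy is reduced to at most one involution, the selected maps $\{\varphi_q\}$ are shown to satisfy Cartan's axioms for a locally closed local group of holomorphic transformations, and Cartan's Th\'eor\`emes 3 and 10 from \cite{Ca} (Corollary \ref{puttog} here) produce the infinitesimal generator as a limit $m_i(S_{k_i}-Id)\to\psi$ --- no differentiable family is ever constructed. Your alternative route via Cauchy-type jet bounds $|h^{(n)}(0)|\le C^n n!$ is offered without any mechanism and does not rescue the argument. In short: the skeleton matches the paper, but the three load-bearing steps --- isotropy control via dynamics, the uniform domain, and the passage to an infinitesimal transformation --- are precisely the ones missing.
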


The corresponding statement has been earlier proven in the setting of smooth diffeomorphisms, namely, the following result has been obtained. Recall that a set $K\subset \mathbb R^n$ is \emph{locally closed} if for every point $p\in K$ there exists a neighborhood $N$ of $p$ in $\mathbb R^n$ such that $N\cap K$ is a closed subset of $N$.
\begin{theorem}\label{Wilkin}
For any $r\in \mathbb N$, a locally closed subset of $\mathbb R^n$ is $C^r$-homogeneous if and only if it is a submanifold of class $C^r$.
\end{theorem}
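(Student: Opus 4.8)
The plan is to prove both implications of the equivalence, reading $C^r$-homogeneity as the evident analogue of the biholomorphic notion: $K$ is $C^r$-homogeneous if for any $p,q\in K$ there are a neighborhood $U$ of $p$ and a $C^r$-diffeomorphism $\psi:U\to\psi(U)\subset\mathbb R^n$ with $\psi(p)=q$ and $\psi(K\cap U)=K\cap\psi(U)$. The direction ``submanifold $\Rightarrow$ homogeneous'' is routine and I would dispatch it first: if $K$ is a $C^r$-submanifold of dimension $k$, then given $p,q\in K$ I pick $C^r$ straightening charts $\Phi_p,\Phi_q$ of $\mathbb R^n$ carrying $K$ locally onto $\mathbb R^k\times\{0\}$ and sending $p,q$ to the origin; the composite $\Phi_q^{-1}\circ\Phi_p$ is then the desired local equivalence.

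For the substantial direction, assume $K$ is locally closed and $C^r$-homogeneous. Working near a fixed point, local closedness lets me assume $K$ closed in a ball. Since the defining maps are in particular homeomorphisms and homogeneity links every pair of points, the local covering dimension of $K$ is independent of the point; call it $k$. I then introduce the tangent cone
\[
T_pK=\Big\{v\in\mathbb R^n:\ \exists\,x_j\in K,\ x_j\to p,\ t_j\downarrow 0,\ \tfrac{x_j-p}{t_j}\to v\Big\},
\]
a closed cone with vertex $0$. Because each local equivalence $\psi$ is $C^1$ and preserves $K$, its differential $D\psi_p$ carries $T_pK$ linearly isomorphically onto $T_{\psi(p)}K$; hence all tangent cones of $K$ are linearly equivalent, and in particular share a single homeomorphism type.

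The heart of the argument is to produce one point $p_0$ at which $K$ is genuinely smooth --- equivalently, at which $T_{p_0}K$ is a $k$-dimensional linear subspace --- and then to propagate this by homogeneity. I would secure such a point through a blow-up regularity analysis: one shows that $K$ carries locally finite $k$-dimensional Hausdorff measure and is $k$-rectifiable, so that at $\mathcal H^k$-almost every point the rescalings $(K-p)/t$ converge to a genuine tangent $k$-plane, forcing $T_pK$ to be linear there. The scale-invariance of the cone together with the linear equivalence of all cones then upgrades this to: $T_pK$ is a $k$-plane for \emph{every} $p$. I expect this to be the main obstacle. Extracting finite Hausdorff measure and rectifiability from the soft hypotheses of homogeneity and local closedness, and ruling out that the common tangent cone is lower-dimensional or a non-flat cone, are precisely the delicate points; this is exactly where the difficulty present for merely topological homogeneity (the Bing--Borsuk phenomenon, which permits homogeneous fractals) is defeated by exploiting the $C^1$ data encoded in the differentials $D\psi_p$.

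Once every tangent cone is a $k$-plane and these planes vary continuously in $p$ --- continuity again following from homogeneity --- $K$ is locally a $C^1$ graph over its tangent plane, hence a $C^1$-submanifold. To reach class $C^r$ I would bootstrap: fix a chart straightening $K$ near a single point, transport it by the homogeneity diffeomorphisms, which are $C^r$, and verify that the resulting local parametrizations patch into a $C^r$ atlas, the gain of derivatives coming from the $C^r$ regularity of the equivalences combined with the $C^1$ manifold structure already in hand. This shows $K$ is a $C^r$-submanifold and completes the equivalence.
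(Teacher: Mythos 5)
Your bootstrap from $C^1$ to $C^r$ contains the decisive gap. Once you know only that $K$ is a $C^1$ submanifold, a straightening chart near your chosen point is merely $C^1$; transporting it by the homogeneity equivalences, which are $C^r$, again produces only $C^1$ parametrizations, since the composition of a $C^1$ map with a $C^r$ map is no better than $C^1$. There is no automatic \lq\lq gain of derivatives\rq\rq\ in the patching: what homogeneity genuinely gives is that \emph{if} $K$ is of class $C^r$ near a single point, then it is $C^r$ everywhere (a $C^r$ diffeomorphism of $\mathbb R^n$ carries $C^r$ submanifolds to $C^r$ submanifolds), but your argument never produces that one point of higher regularity to propagate. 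This is exactly what the paper's proof supplies, by a mechanism absent from your proposal: it prolongs $M$ to $\mathcal M=\{(p,T_pM)\}\subset\mathbb R^n\times Gr(m,\mathbb R^n)$, shows that a $C^{k_0+1}$ equivalence of $M$ induces, via its differential acting rationally on the Grassmannian chart, a $C^{k_0}$ equivalence of $\mathcal M$, so that $\mathcal M$ is locally closed and $C^{k_0}$-homogeneous; the $C^1$ theorem of \cite{RSS}, \cite{Sk} plus induction then make $\mathcal M$ a $C^{k_0}$ submanifold, and a constant-rank/implicit-function argument (Lemma \ref{Ckplus1}) converts this into a nonempty open subset of $M$ where $f$ is $C^{k_0+1}$ --- and only at that stage is homogeneity invoked to spread the regularity everywhere.

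The base case is also not established as written. The paper does not reprove the $C^1$ statement: it cites \cite{RSS} and \cite{Sk} and confines itself to the reduction of $C^r$ to $C^1$. You instead sketch a proof via tangent cones, locally finite $\mathcal H^k$-measure and $k$-rectifiability, but the crucial assertions (finiteness of the measure, rectifiability, flatness and continuity of the tangent cones) are stated without argument, and you yourself flag them as the main obstacles; moreover, rectifiability yields approximate tangent planes $\mathcal H^k$-a.e., which does not by itself control the full tangent cone $T_pK$ you work with. If you follow the paper and simply cite the $C^1$ case, the entire remaining content of the theorem is the inductive step from $C^1$ to $C^r$ --- precisely the step your proposal does not carry out.
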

The $C^1$ version of this result has been proved in \cite{RSS}, \cite{Sk}. In \cite{Wi}, this has been extended to the $C^r$ setting for any $r\in \mathbb N$. In section \ref{lcs}, we propose an alternative way of deriving Theorem \ref{Wilkin} from the case $r=1$ .

Although very natural, the real-analytic version of the homogeneity problem appears to be still open in its full generality, see the remarks at the end of \cite{Sk}. Combining Theorem \ref{Wilkin} with Theorem \ref{holhomo}, one gets that any locally closed subset of $\mathbb C$ which is homogeneous under holomorphic diffeomorphisms is either discrete, an open set or a real-analytic curve (see Th. \ref{disopcur}). 

\

\subsection*{Informal outline of the proof}

\

\

In order to obtain Theorem \ref{holhomo}, our general idea is the following: if we could show that $\gamma$ is an integral curve of a non-vanishing holomorphic vector field, then it would immediately follow that it is real-analytic. Suppose that $0\in \gamma$, and for any $p\in \gamma$ let $\psi_p:U\to \psi_p(U)$ be a local biholomorphism preserving $\gamma$ such that $\psi_p(0)=p$. We try to construct a vector field by a suitable limit process involving the maps $\psi_p$, using very classical methods developed by Cartan \cite{Ca}.

To be able to use these methods, first of all we need to show that the $\psi_p$ (when appropriately selected) are all defined on a common domain and are bounded uniformly with respect to $p$; this is accomplished in Lemma \ref{Baire} by means of a Baire categorical argument. Even in this situation the behavior of the $\psi_p$ is not yet enough under control to apply the results in \cite{Ca}. We can, however, apply them if we know additionally that the family $\{\psi_p\}$ is a group, that is for any $p_1,p_2\in \gamma$ the composition $\psi_{p_1}\circ\psi_{p_2}$ coincides with $\psi_{p_3}$ where $p_3=\psi_1(\psi_2(0))$. (It turns out that the existence of an inverse is easier to ensure).

In general there is no guarantee that the group property is satisfied. We try to overcome this obstacle in the following way: even though $\psi_{p_1}\circ\psi_{p_2}$ and $\psi_{p_3}$ need not coincide, they must always differ by an \emph{isotropy} of $\gamma$, that is $\psi_{p_1}\circ\psi_{p_2}=  \psi_{p_3}\circ\phi$ for some local biholomorphism $\phi$ fixing $0$ and preserving $\gamma$. If we can show that a non-analytic curve $\gamma$ does not admit any non-trivial isotropy (if $\gamma$ is real-analytic we are already done), then the group property holds and the methods in \cite{Ca} can be used.

Unfortunately it is not quite true that a non-analytic curve cannot have any holomorphic isotropy. We can show that a weaker property (see condition (ICP) introduced before Lemma \ref{onlyone}) holds instead: unless $\gamma$ is real-analytic outside of $0$, the only isotropies it can admit are involutions. The proof of  this property uses the Leau-Fatou flower theorem describing the dynamics of one-dimensional biholomorphism germs, and is given in Lemma \ref{leaufatou}. As it turns out, property (ICP) is nevertheless good enough: as shown in Proposition \ref{Ghom} it can exploited to construct a non-vanishing vector field and thus to conclude the proof of Theorem \ref{holhomo}.

\

\subsection*{Plan of the proof}

\

\

 Theorem \ref{holhomo} will be proved as a consequence of the following results: to state them we first need to give some definitions.

Denote by ${\rm Diff}^\omega(\mathbb R^2,0)$ the group of germs at $0$ of real-analytic diffeomorphisms defined in a neighborhood of $0\in \mathbb R^2$, equipped with the direct limit topology, that is, given $\{h_j\}_{j\in \mathbb N}\subset {\rm Diff}^\omega(\mathbb R^2,0)$ and $h\in {\rm Diff}^\omega(\mathbb R^2,0)$ we have $h_j\to h$ if and only if there is a neighborhood $U$ of $0\in \mathbb R^2$ and analytic diffeomorphisms $\tilde h_j:U\to \tilde h_j(U)\subset \mathbb R^2$, $\tilde h:U\to \tilde h(U)\subset \mathbb R^2$ such that the germ induced by $\tilde h_j$ (resp. $\tilde h$) at $0$ is  $h_j$ (resp. $h$) and $h_j\to h$ uniformly on $U$. Let $\mathcal G$ be a subgroup. We say that $\mathcal G$ is \emph{closed} if it is a closed subset of ${\rm Diff}^\omega(\mathbb R^2,0)$. For any $q\in \mathbb R^2$, denote by $\tau_q:\mathbb R^2\to\mathbb R^2$ the translation $\tau_q(x) = x + q$. We will say that a subgroup $\mathcal G\leq {\rm Diff}^\omega(\mathbb R^2,0)$ is \emph{uniform} if the following condition is satisfied: for any domain $U\subset \mathbb R^n$, $0\in U$, any $\psi \in {\rm Diff}^\omega(U)$ such that the germ of $\psi$ at $0$ belongs to $\mathcal G$, and for all $q\in U$ the germ at $0$ of the map $ \tau_{-\psi(q)} \circ \psi \circ \tau_{q}$ also belongs to $\mathcal G$. In other words, we require the representatives of the germs of $\mathcal G$ to induce a germ of $\mathcal G$ at any point of their domain (after suitable translations). 
 It is immediate to check that the subgroup ${\rm Hol}(\mathbb C,0)$ of holomorphic germs is both closed and uniform. We say that $\mathcal G$ satisfies \emph{property (ICP)} if for any non-involutive $g\in \mathcal G$ and any curve $\lambda\subset \mathbb R^2$ such that $0\in \lambda$ and $g(\lambda)\subset \lambda$ we have that $\lambda\setminus \{0\}$ is real-analytic (that is, for any $p\in \lambda$, $p\neq 0$ there is a neighborhood $U$ of $p$ such that $U\cap \lambda$ is real-analytic). Finally, we say that a curve $\gamma$ is \emph{$\mathcal G$-homogeneous} if for any two points $p,q\in \gamma$ there exist a neighborhood $U$ of $p$ in $\mathbb \mathbb R^2$ and a real-analytic diffeomorphism $\psi:U\to \psi(U)\subset \mathbb R^2$ such that $\psi(U\cap \gamma) = \gamma\cap \psi(U)$, $\psi(p) = q$ and the germ of $\psi$ at $p$ belongs to $\mathcal G$ (cf. the definition in section \ref{locequivpro}).

The most general result we obtain can now be formulated as follows: 

\begin{prop}\label{Ghom} Let $\mathcal G\leq {\rm Diff}^\omega(\mathbb R^2,0)$ be a closed uniform subgroup satisfying (ICP). Then every $\mathcal G$-homogeneous smooth curve $\gamma\subset \mathbb R^2$ is real-analytic.
\end{prop}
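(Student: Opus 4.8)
The plan is to realize $\gamma$, near an arbitrary point, as an orbit of a one-parameter group of real-analytic diffeomorphisms, and then to recover it as the integral curve of a non-vanishing real-analytic vector field, whose existence forces $\gamma$ to be real-analytic. Since real-analyticity is a local property and $\mathcal G$-homogeneity lets us transport the local picture along $\gamma$, I would fix a point, which after a translation we take to be $0\in\gamma$, and aim to prove analyticity of $\gamma$ in a neighborhood of $0$. For each $p$ in a small arc of $\gamma$ around $0$, $\mathcal G$-homogeneity furnishes a germ $\psi_p\in\mathcal G$ with $\psi_p(0)=p$ and $\psi_p(\gamma)=\gamma$; these are the building blocks of the group.

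First I would settle the degenerate case coming from (ICP). The ambiguity in the choice of $\psi_p$ is exactly right composition by an \emph{isotropy}, i.e. a germ $\phi\in\mathcal G$ with $\phi(0)=0$ and $\phi(\gamma)=\gamma$; likewise $\psi_{p_1}\circ\psi_{p_2}$ and $\psi_{p_3}$ (with $p_3=\psi_{p_1}(p_2)$) differ by such an isotropy. Applying property (ICP) to a non-involutive isotropy, together with the Leau--Fatou analysis of Lemmas \ref{onlyone} and \ref{leaufatou}, yields the dichotomy: either $\gamma\setminus\{0\}$ is real-analytic, or every isotropy of $\gamma$ is an involution. In the first case I would finish immediately: choosing any $q\in\gamma$, $q\neq 0$, the diffeomorphism $\psi_q^{-1}$ maps a neighborhood of $q$, on which $\gamma$ is real-analytic, to a neighborhood of $0$, and since it is itself real-analytic it carries the real-analytic germ of $\gamma$ at $q$ to a real-analytic germ at $0$; thus $\gamma$ is real-analytic everywhere and we are done.

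It remains to treat the case in which every isotropy is an involution, which is where the substance lies. Here I would first invoke Lemma \ref{Baire} to select, for $p$ ranging over a neighborhood $V$ of $0$ in $\gamma$, representatives $\psi_p$ (and their inverses) defined on one common domain $U$ and uniformly bounded on $U$; this produces a normal family and is what makes a limiting/compactness argument possible. The involutivity of all isotropies is then used to promote the family $\{\psi_p\}$ to a genuine local group: the obstruction to $\psi_{p_1}\circ\psi_{p_2}=\psi_{p_3}$ is a single involutive isotropy, and a consistent choice of the $\psi_p$ along the arc (resolving this $\mathbb Z/2$ ambiguity, after passing to squares or to one side of $0$ if necessary) closes the family under composition and inversion. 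With the group and boundedness properties in hand, the classical results of Cartan \cite{Ca} apply: the resulting closed, locally compact group of germs is a Lie group acting on $\gamma$, and I would extract a one-parameter subgroup $t\mapsto\Psi_t$ whose orbit $t\mapsto\Psi_t(0)$ sweeps out $\gamma$ near $0$.

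Finally, the infinitesimal generator $X=\frac{d}{dt}\big|_{t=0}\Psi_t$ is a real-analytic vector field; it is non-vanishing at $0$ because the orbit of $0$ moves along $\gamma$, so that $\gamma$ coincides near $0$ with the integral curve of $X$ through $0$ and is therefore real-analytic. This proves Proposition \ref{Ghom}, and, applied to $\mathcal G={\rm Hol}(\mathbb C,0)$, Theorem \ref{holhomo}. The main obstacle I anticipate is the middle step of the involution case: turning the involutive isotropy into an honest group law requires a genuinely consistent analytic selection $p\mapsto\psi_p$ over the arc, and one must check that the $\mathbb Z/2$ ambiguity can be globally resolved (rather than only pointwise) without destroying the uniform bounds of Lemma \ref{Baire}; verifying the precise hypotheses under which Cartan's construction produces a non-degenerate one-parameter subgroup is the technical heart of the argument.
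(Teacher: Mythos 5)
Your overall architecture coincides with the paper's: Lemma \ref{Baire} to put the transporting maps $\phi_q$ on a common domain with uniform bounds, (ICP) together with Lemma \ref{onlyone} to reduce the isotropy of a nowhere analytic $\gamma$ to at most one involution $g$, assembly of a local group in Cartan's sense, Corollary \ref{puttog} to produce an infinitesimal transformation, and the flow-line argument to conclude analyticity. However, the step you yourself flag as the technical heart --- resolving the $\mathbb Z/2$ ambiguity so that the family closes under composition --- is left genuinely open, and the repairs you sketch would not work: replacing the $\psi_p$ by their squares destroys transitivity (the squared map no longer sends $0$ to $p$), restricting to one side of $0$ does not remove the two-fold ambiguity at any individual point, and no \emph{analytic} selection is available or needed --- the selection in the paper is merely continuous, which is all that Cartan's axioms require. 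The actual mechanism is a metric threshold: form $H=\{\phi_q\}\cup\{\phi_q\circ g\}$, fix $\epsilon_0<\delta_{\overline B_{1/2}(0)}(g)$ (sup-distance to $Id$), and define $\varphi_q$ to be the element of $H$ over $q$ with $\delta_{\overline B_{1/2}(0)}(\varphi_q)\leq \epsilon_0$. That exactly one of $\phi_q,\ \phi_q\circ g$ qualifies for $q$ near $0$ is proved by a Montel compactness argument: any convergent subsequence in $H$ converges to an element of $H$ (closedness of $\mathcal G$ plus (ICP) and Lemma \ref{onlyone} force the limit to be $\phi_{q_0}$ or $\phi_{q_0}\circ g$), so a failure along $q_i\to 0$ would put maps converging to $g$ inside $H_{\epsilon_0}$, a contradiction. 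The same threshold is what makes the group law \emph{exact}: for $\varphi_{q_1},\varphi_{q_2}$ close to $Id$, the composite $\Phi=\varphi_{q_2}\circ\varphi_{q_1}$ differs from $\varphi_{q_3}$ by an isotropy, hence equals $\varphi_{q_3}$ or $\varphi_{q_3}\circ g$, and the second is excluded because $\delta_{\overline B_{1/2}(0)}(\Phi)<\epsilon_0<\delta_{\overline B_{1/2}(0)}(\varphi_{q_3}\circ g)$; the inverse axiom is verified by the same device. Without some such quantitative selection your ``consistent choice along the arc'' remains an assertion, not a construction.

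Two further points. First, your concluding non-vanishing claim is circular as stated: Cartan yields a non-identically-vanishing field, and you must still rule out its vanishing \emph{at} $0$; ``the orbit of $0$ moves along $\gamma$'' presupposes exactly this. The paper's fix again uses the uniqueness of the selection: if ${\rm Re}\Psi$ vanished at $0$ then $\psi_t(0)=0$ for all small $t$, whence $\psi_t=\varphi_0=Id$ by uniqueness of the element of $G$ fixing $0$, contradicting non-triviality. Second, Cartan's property [P] (Th\'eor\`eme 10) is specific to groups of \emph{holomorphic} transformations, so the local group must be built from the holomorphic extensions furnished by Lemma \ref{Baire} (Montel's theorem supplying the local compactness), not from the real-analytic germs directly; your proposal invokes Lemma \ref{Baire} so this is implicitly in place, but it should be made explicit, since for a general uniformly bounded real-analytic family the compactness and property [P] would both fail, and there is likewise no need for the full Lie-group structure you invoke --- the infinitesimal transformation of Corollary \ref{puttog} suffices.
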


The second result we need is the following

\begin{lemma} \label{leaufatou}
The group ${\rm Hol}(\mathbb C,0)$ satisfies (ICP). Indeed it satisfies the following stronger property: let $f\in {\rm Hol}(\mathbb C,0)$, $f$ not an involution, and let $\gamma$ be a curve of class $C^1$, defined in a small neighborhood of $0$ in $\mathbb C$ and with $0\in \gamma$. Suppose that $\gamma$ is invariant under $f$. Then $\gamma\setminus\{0\}$ is real-analytic.
\end{lemma}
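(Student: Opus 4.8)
The plan is to organize the proof according to the multiplier $\lambda=f'(0)$, to put $f$ in a suitable normal form in each regime by a holomorphic change of coordinate, and to transport the real-analyticity question through these (holomorphic) normalizing maps. Throughout I use that $\gamma$ is an embedded $C^1$ arc with $\gamma(0)=0$ and $\gamma'(0)\neq 0$.

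The first step is to show that $\lambda$ must be real. Since $f(\gamma)\subseteq\gamma$ and $\gamma$ is an embedding, the relation $f(\gamma(t))=\gamma(\sigma(t))$ defines a $C^1$ germ $\sigma=\gamma^{-1}\circ f\circ\gamma$ of the real line with $\sigma(0)=0$. Differentiating at $t=0$ gives $f'(0)\,\gamma'(0)=\sigma'(0)\,\gamma'(0)$, so $\lambda=\sigma'(0)\in\mathbb R$ because $\gamma'(0)\neq 0$. In particular the irrationally neutral (Siegel and Cremer) case and the non-real hyperbolic case admit no invariant $C^1$ curve through $0$, so only $|\lambda|\neq 1$ with $\lambda$ real, and $\lambda=\pm1$, need to be treated. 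For $|\lambda|\neq 1$ I would invoke the Koenigs linearization: a holomorphic germ $\phi$ conjugates $f$ to $w\mapsto\lambda w$, and after replacing $f$ by $f^2$ and $f$ by $f^{-1}$ if necessary I may assume the multiplier is $0<\mu<1$. Then $\Gamma=\phi(\gamma)$ satisfies $\mu\Gamma\subseteq\Gamma$, and for each $p\in\Gamma$ the orbit $\mu^n p$ tends to $0$ along $\Gamma$ while keeping the constant argument $\arg p$. Since for a $C^1$ arc the direction of points tending to $0$ must converge to the tangent direction, every point of $\Gamma$ carries one of two opposite arguments, so $\Gamma$ lies on a straight line through $0$; pulling back by $\phi^{-1}$ shows $\gamma$ is real-analytic. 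The case $\lambda=-1$ reduces to the parabolic case: either $f$ is an involution (excluded) or $f^2\neq\mathrm{id}$ is a parabolic germ with $(f^2)'(0)=1$ and $f^2(\gamma)\subseteq\gamma$, to which the next step applies.

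The heart of the matter is the parabolic case $\lambda=1$, $f\neq\mathrm{id}$; the exclusion of involutions is exactly what guarantees nontriviality here, the statement being false for $f=\mathrm{id}$, under which every curve is invariant. I would apply the Leau--Fatou flower theorem. On each attracting petal a Fatou coordinate $\Phi$ is a biholomorphism conjugating $f$ to the translation $w\mapsto w+1$, so $\Gamma=\Phi(\gamma)$ is a $C^1$ curve, asymptotic to $\mathrm{Re}\,w\to+\infty$ and invariant under unit translation; outside a fundamental strip it is a graph $\mathrm{Im}\,w=\varphi(\mathrm{Re}\,w)$ with $\varphi$ one-periodic and $C^1$. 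The decisive observation is that $\Gamma$ arises from a curve that must be $C^1$ at $0$: transporting $\varphi$ back through $\Phi^{-1}$ and computing the tangent of $\gamma$ as $t\to 0$ shows that the deviation term contributes an oscillating summand to the derivative that fails to converge unless $\varphi$ is constant, forcing $\Gamma$ to be a horizontal line and hence $\gamma$ to be real-analytic on the petal. The repelling petals are treated identically through $f^{-1}$.

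Finally, since $f'(0)=1\neq 0$ the germ $f$ is a local biholomorphism on a punctured neighborhood of $0$, so for an arbitrary $p\in\gamma\setminus\{0\}$ a suitable forward or backward iterate of $f$ carries a neighborhood of $p$ in $\gamma$ biholomorphically into a petal where analyticity has already been established, and real-analyticity propagates back to $p$; together with the cases above this yields that $\gamma\setminus\{0\}$ is real-analytic. I expect the main obstacle to be precisely this parabolic rigidity, namely making rigorous, with the genuine Fatou-coordinate asymptotics (including their logarithmic corrections and the matching across the $2k$ petals supplied by the flower theorem), that the one-periodic deviation $\varphi$ is incompatible with $C^1$-smoothness of $\gamma$ at $0$ unless it is constant.
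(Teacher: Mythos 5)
Your skeleton coincides with the paper's proof: reality of $\lambda=f'(0)$ forced by the invariant tangent line; passage to $f^{\circ 2}$ (legitimate precisely because $f\notin\mathcal R$ gives $f^{\circ 2}\neq Id$) to dispose of negative multipliers and the case $\lambda=-1$; Koenigs linearization for $0<|\lambda|<1$, where your secant-direction argument (orbit points $\mu^n p$ keep constant argument, hence must lie on the tangent line) is a sound variant of the paper's tangent-direction argument ($\arg T_{p_j}(\widetilde\gamma)$ is constant along the orbit, contradicting $C^1$ at $0$ unless $\widetilde\gamma$ is the axis); and the Leau--Fatou flower with Fatou coordinates in the parabolic case, the analyticity of $\gamma\setminus\{0\}$ being obtained petal by petal (your final iteration step is harmless but unnecessary, since the petals already cover a punctured neighborhood of $0$). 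However, the parabolic case --- which you correctly identify as the heart of the matter --- is not actually proved, and you concede as much in your closing sentence. A minor issue first: the invariance only gives $\Gamma+1\subseteq\Gamma$, which makes $\Gamma$ a union of integer translates of a compact arc but not a priori a graph $\mathrm{Im}\,w=\varphi(\mathrm{Re}\,w)$ with one-periodic $\varphi$; the paper sidesteps graphness by simply choosing, if $\Gamma$ is not a horizontal line, a point $p$ with $\arg T_p(\Gamma)=\alpha\neq 0$ and using $\arg T_{p+n}(\Gamma)=\alpha$ for all $n$.

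The genuine gap is the quantitative step that turns this translation-invariance upstairs into a contradiction with $C^1$-smoothness of $\gamma$ at $0$. Knowing that $\Psi^{-1}$ is conformal at each point is not enough: the rotation $\arg d(\Psi^{-1})(p+n)=\arg\xi_n$ could a priori oscillate, in which case the tangent directions at $q_n=\Psi^{-1}(p+n)\to 0$ carry no usable limit. One must prove that $\arg\xi_n$ converges (in the paper, to $\pi$), so that $\arg T_{q_n}(\gamma)\to\pi+\alpha\notin\{0,\pi\}$, contradicting that $T_0(\gamma)$ is the attracting direction. The paper achieves this by factoring $\Psi=\sigma\circ\psi$ with $\psi(z)=1/(kz^k)$, showing $\arg r_n\to 0$ for the orbit $r_n=\varphi^{\circ n}(r)$ (whence $\arg \partial_z\psi^{-1}(r_n)\to\pi$), and then establishing $\partial_z\sigma(r_n)\to 1$ through an inductive telescoping estimate on the derivatives of the iterates, namely $\bigl|\partial_z\varphi^{\circ(j+1)}(z)-\partial_z\varphi^{\circ j}(z)\bigr|\leq C_1\,|\varphi^{\circ j}(z)|^{-2}\,|\partial_z\varphi^{\circ j}(z)|$ combined with the lower bound $|\varphi^{\circ j}(z)|>\mathrm{Re}\,z+j/2$ from the flower theorem, together with a $|z|^{-3}$ bound on the derivative of the remainder $R$. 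These estimates occupy the bulk of the paper's proof and are exactly what your sketch replaces with the unproven assertion that the ``oscillating summand fails to converge unless $\varphi$ is constant.'' Without supplying them (or an equivalent result on $C^1$-asymptotics of Fatou coordinates along orbits in a petal), the parabolic rigidity --- and with it the lemma --- is not established.
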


The proof of this statement is relatively elementary (relying on the theory of complex one-dimensional dynamical systems) but we could not find an explicit reference in the literature. Clearly, combining this lemma with Proposition \ref{Ghom} we obtain Theorem \ref{holhomo}.

The proof of Proposition \ref{Ghom} is a consequence of two claims:
\begin{itemize}
\item[\bf Claim 1] Let $G$ be a locally closed group of holomorphic transformations, seen from the local point of view, such that the identity transformation $Id$ is not isolated. Then $G$ admits a non-trivial infinitesimal transformation.
\item[\bf Claim 2] If $\gamma$ satisfies the assumptions of Proposition \ref{Ghom}, then either $\gamma$ is real-analytic or there is a subset $\{\psi_p\}$ of the local diffeomorphisms leaving $\gamma$ invariant which form a locally closed group of holomorphic transformations, seen from the local point of view, in which $Id$ is not isolated.
\end{itemize}
The definition of locally closed group of holomorphic transformations seen from the local point of view is borrowed from the work of Cartan \cite{Ca} and is too long to be stated here: we refer to section \ref{Cartan} for the details. The  statement that $G$ admits a non-trivial infinitesimal transformation means that there exists a non-vanishing analytic vector field whose flow consists of elements of $G$. Putting together Claim 1 and Claim 2 we show that either $\gamma$ is real-analytic or it is an integral curve of a non-vanishing analytic vector field, and thus in any case real-analytic.

Claim 1 coincides with Corollary \ref{puttog}, and its proof is obtained in section \ref{Cartan} as direct consequence of the results in \cite{Ca}. The hard part in the proof of Proposition \ref{Ghom} is to establish Claim 2. To achieve this, the two main ingredients are the following:
\begin{itemize}
\item[\bf Claim 3] If $\gamma$ satisfies the assumptions of Proposition \ref{Ghom} there is a subset $\{\psi_p\}$ of local diffeomorphisms leaving $\gamma$ invariant which are defined on a common domain and are uniformly bounded.
\item[\bf Claim 4] Let $\mathcal G$ be a group satisfying (ICP), and let $\gamma$ be a nowhere analytic curve of class $C^\infty$, $0\in \gamma$. Then there exists at most one (involutive) non-trivial element $g\in \mathcal G$  such that $\gamma$ is $g$-invariant.
\end{itemize}

Claim 3 is a particular case of the more general Lemma \ref{Baire}, which also contains further technical details that are needed later in the proof. The reason we need this claim is that the uniformity of the domain is part of the definition of group of holomorphic transformations seen from the local point of view given in section \ref{Cartan}.

Claim 4 is a restatement of Lemma \ref{onlyone}, and it is through this claim that property (ICP) enters in the proof of Proposition \ref{Ghom}. The Lemma is used in the following way: if the homogeneous curve $\gamma$ is not real-analytic, then it is necessarily nowhere analytic. Because of the claim, its isotropy group at any point consists of at most two elements. This turns out to be enough to select a subset $\{\psi_p\}$ of automorphisms of $\gamma$ forming a locally closed group of holomorphic transformations, seen from the local point of view, thus establishing Claim 2.

\

\subsection*{Motivations and generalizations}

\

\

\noindent In the proof of Theorem \ref{holhomo} outlined above, the holomorphicity of the diffeomorphisms involved is used only in the study of the dynamics of isotropies. This analysis can be performed in other cases, too, allowing to prove corresponding results for the homogeneity with respect to a wider class of analytic diffeomorphisms of $\mathbb R^2$.  

Of course, we should expect the homogeneity with respect to the full group of real-analytic diffeomorphisms (rather than certain specific subgroups) to be sufficient to conclude that the curve is real-analytic.  Our method at the present stage needs some additional rigidity of the diffeomorphism groups in question; on the other hand, one off-shot is the consideration of some interesting dynamical questions.

Indeed, the verification of property (ICP) leads to the study of the analytic isotropy group of non-analytic curves, a problem that we find interesting in its own right and seems not to be completely understood. We are able to check that (ICP) holds for certain groups whose dynamics is relatively simple: the first instance is given by Lemma \ref{leaufatou} stated above.

Let us now consider the group ${\rm Shr}^\omega(\mathbb R^2,0)$ of \emph{analytic shears}, that is the germs $\psi$ of diffeomorphisms of the form $\psi(x,y)= (\phi(x), y + h(x))$ for certain $\phi, h$, where $\phi\in {\rm Diff}^\omega(\mathbb R,0)$ and $h$ is a germ of real-analytic function at $0$. Also for the case of shears we obtain 
\begin{lemma}\label{invshr}
The group ${\rm Shr}^\omega(\mathbb R^2,0)$ satisfies (ICP). More precisely we have the following: let $\phi\in {\rm Shr}^\omega(\mathbb R^2,0)$, $\phi$ not involutive, and let $\gamma\subset \mathbb R^2$ be a $\phi$-invariant germ of curve around $0$, which is of the form $\gamma = \{y = f(x)\}$ for some continuous function $f$. Then $\gamma\setminus \{0\}$ is real-analytic; moreover, $\gamma$ is uniquely determined.
\end{lemma}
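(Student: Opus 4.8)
The plan is to convert $\phi$-invariance into a scalar functional equation and to read off the regularity of $f$ from the dynamics of the first component of $\phi$. Writing $\phi(x,y)=(\sigma(x),\,y+h(x))$ with $\sigma\in{\rm Diff}^\omega(\mathbb R,0)$ and $h$ a real-analytic germ, one has $\sigma(0)=0$ and $h(0)=0$, and since $0\in\gamma$ with $f$ continuous also $f(0)=0$. Invariance of $\gamma=\{y=f(x)\}$ is then equivalent to
\begin{equation*}
f(\sigma(x))=f(x)+h(x).\tag{$\ast$}
\end{equation*}
First I would dispatch the degenerate configurations. If $\sigma={\rm id}$, then non-involutivity of $\phi$ forces $h\not\equiv 0$ (because $\phi^2=({\rm id},\,y+2h)$), whereas $(\ast)$ forces $h\equiv 0$; hence no invariant graph exists and the statement holds vacuously. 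If $\sigma$ were an analytic involution, applying $(\ast)$ twice together with $\sigma^2={\rm id}$ would give $h+h\circ\sigma\equiv 0$, which is exactly the condition making $\phi$ involutive --- again excluded. This is the one place where the non-involutivity hypothesis is genuinely used: it guarantees that $\sigma$ is neither trivial nor periodic, so that (after possibly replacing $\sigma$ by $\sigma^{-1}$, which only replaces $h$ by $-h\circ\sigma^{-1}$ in $(\ast)$) the point $0$ is an attracting endpoint of the real $\sigma$-dynamics on the side under consideration.

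I would then split according to the multiplier $\lambda=\sigma'(0)$. In the hyperbolic case $|\lambda|\neq 1$ I would analytically linearize $\sigma$ to $x\mapsto\lambda x$ and solve $(\ast)$ coefficientwise: with $h=\sum_{n\ge 1}h_nx^n$ the unique formal solution is $f=\sum_{n\ge 1}\frac{h_n}{\lambda^n-1}x^n$, and because $|\lambda^n-1|$ is bounded away from $0$ this series has at least the radius of convergence of $h$, so $f$ is real-analytic (even at $0$). Uniqueness is automatic here and in every case: two continuous solutions differ by a continuous $g$ with $g\circ\sigma=g$ and $g(0)=0$, and iterating $g(x)=g(\sigma^n(x))\to g(0)=0$ on the attracting side forces $g\equiv 0$; this is what yields the assertion that $\gamma$ is uniquely determined.

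The heart of the matter is the neutral case. When $\lambda=-1$ (and, by the above, $\sigma$ not an involution) I would pass to $\sigma^2$, tangent to the identity and non-trivial, with associated cocycle $h_2=h+h\circ\sigma$, reducing to a parabolic germ $\sigma(x)=x+ax^{k}+\cdots$, $a\neq 0$, $k\ge 2$. Complexifying $\sigma$ and $h$ and invoking the Leau--Fatou flower theorem, I would fix an attracting petal $P\subset\mathbb C$ containing the real attracting interval $I$, on which $\sigma^n\to 0$, together with a holomorphic Fatou coordinate $\alpha:P\to\{{\rm Re}\,\zeta>R\}$ with $\alpha\circ\sigma=\alpha+1$. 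In the coordinate $\zeta=\alpha(z)$ equation $(\ast)$ becomes the difference equation $\tilde f(\zeta+1)-\tilde f(\zeta)=\tilde h(\zeta)$, where $\tilde h=h\circ\alpha^{-1}$ is holomorphic; telescoping and using $\tilde f\to 0$ as ${\rm Re}\,\zeta\to+\infty$ gives the candidate solution $\tilde f(\zeta)=-\sum_{j\ge 0}\tilde h(\zeta+j)$, whose convergence along the real ray is precisely the existence of $\gamma$.

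The main obstacle is to upgrade this real, a priori only conditionally convergent, identity into genuine analyticity. My plan here is to observe that along the attracting real ray the orbit $z_j=\sigma^j(x)$ decreases monotonically to $0$, so the terms $\tilde h(\zeta+j)$ have eventually constant sign and the series converges absolutely; since $|z_j|\sim C\,(\zeta+j)^{-1/(k-1)}$ and $h$ vanishes to some order $m$ at $0$, absolute convergence forces $m/(k-1)>1$, i.e.\ $h$ vanishes to order $\ge k$. Once this is known, the bound $|\tilde h(\zeta+j)|\le C\,|\zeta+j|^{-m/(k-1)}$ holds throughout the half-plane and is summable uniformly on compact subsets, so $\tilde f=-\sum_j\tilde h(\cdot+j)$ is holomorphic on $\{{\rm Re}\,\zeta>R\}$ and agrees with $f$ along $I$; pulling back by $\alpha^{-1}$ shows $f$ is real-analytic on $I\setminus\{0\}$. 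Running the same argument for $\sigma^{-1}$ covers the opposite side, and since $(\ast)$ propagates analyticity of $f$ under $\sigma^{\pm 1}$ (both $\sigma$ and $h$ being analytic), $f$ is real-analytic on all of $\gamma\setminus\{0\}$. I expect the two delicate points to be the rigorous deduction of the forced vanishing order of $h$ from mere continuity of $f$, and the bookkeeping of attracting petals and sign conventions in the passage through $\sigma^2$ when $\lambda=-1$.
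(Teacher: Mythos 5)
Your proposal is correct and follows essentially the same route as the paper: reduce invariance to the cohomological equation $f\circ\sigma = f + h$, dispatch the degenerate bases ($\sigma$ trivial or involutive) by noting an invariant graph would force $\phi$ involutive, solve the hyperbolic case by analytic linearization and coefficientwise summation with uniformly bounded divisors $(\lambda^n-1)^{-1}$, and in the parabolic case use the Leau--Fatou petal and the orbit decay $|\sigma^{\circ j}(x)|\asymp j^{-1/(k-1)}$ so that convergence of the orbit sum (forced by continuity of $f$ at $0$ and constancy of signs) pins down the vanishing order of the shift, after which the sum is holomorphic on the petal and identifies $f$ there, with uniqueness by the same telescoping along attracting orbits. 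The only cosmetic differences are that you sum the cocycle in the Fatou coordinate on a half-plane, while the paper sums $g(h^{\circ j}(z))$ directly on small balls in the petal using only the cruder conjugation to $z\mapsto z+1+b/z+O(1/z^2)$ and the estimates ${\rm Re}\,\varphi^{\circ j}(z)>{\rm Re}\,z+j/2$, $|\varphi^{\circ j}(z)|=O(j)$, and that you treat $\lambda<0$, $|\lambda|\neq 1$ directly rather than first passing to $\phi^{\circ 2}$ --- both immaterial.
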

from which we get the following result:

\begin{theorem}\label{shearhomo}
Let $\gamma\subset \mathbb R^2$ be a smooth  curve. Then $\gamma$ is ${\rm Shr}^{\omega}(\mathbb R^2,0)$-homogeneous if and only if it is either of the form $\gamma = \{x = c\}$, $c\in \mathbb R$, or $\gamma=\{y=f(x)\}$ for a real-analytic function $f$.
\end{theorem}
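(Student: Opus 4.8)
The plan is to deduce Theorem~\ref{shearhomo} by specializing the general machinery already developed in the paper to the group $\mathcal G = {\rm Shr}^\omega(\mathbb R^2,0)$. The first thing I would verify is that this group is a legitimate instance to which Proposition~\ref{Ghom} applies, namely that ${\rm Shr}^\omega(\mathbb R^2,0)$ is a \emph{closed uniform} subgroup of ${\rm Diff}^\omega(\mathbb R^2,0)$ satisfying (ICP). The (ICP) property is exactly the content of Lemma~\ref{invshr}, so that part is free. Closedness should follow because a uniform limit of shears $(x,y)\mapsto(\phi_j(x), y+h_j(x))$ forces $\phi_j$ and $h_j$ to converge separately (evaluate at $y=0$ to extract $h_j$, then at a second value of $y$ to extract $\phi_j$), and the limiting map retains the shear form; uniformity is a direct check that conjugating a shear representative by a translation $\tau_q$ produces another shear germ at $0$, since the shear structure $(x,y)\mapsto(\phi(x),y+h(x))$ is manifestly preserved under the affine changes $x\mapsto x+q_1$, $y\mapsto y+q_2$.

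Once these structural facts are in place, the easy (``if'') direction requires producing, for each target curve, an explicit homogenizing family of shears. For a vertical line $\gamma=\{x=c\}$ I would use shears of the form $(x,y)\mapsto(x, y+t)$, which act transitively along any such line while preserving it; for a graph $\gamma=\{y=f(x)\}$ with $f$ real-analytic I would instead exploit that the shear $(x,y)\mapsto(\phi(x), y + h(x))$ maps the graph of $f$ to the graph of $g$ where $g(\phi(x)) = f(x)+h(x)$. Given two points $(a,f(a))$ and $(b,f(b))$ on the curve, I choose $\phi\in{\rm Diff}^\omega(\mathbb R,0)$ with $\phi(a)=b$ and then set $h(x)=f(\phi(x))-f(x)$, which is real-analytic precisely because $f$ is; the resulting shear is a genuine analytic automorphism of $\gamma$ carrying $p$ to $q$, so $\gamma$ is ${\rm Shr}^\omega$-homogeneous.

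The substantive (``only if'') direction is where Proposition~\ref{Ghom} does the heavy lifting. Applying it directly, any ${\rm Shr}^\omega(\mathbb R^2,0)$-homogeneous smooth curve $\gamma$ is automatically real-analytic. The remaining task is purely to classify which real-analytic curves can actually be ${\rm Shr}^\omega$-homogeneous, and here the rigid product structure of shears — the first coordinate transforms by $\phi(x)$ alone, independently of $y$ — is what pins down the two listed normal forms. The key observation is that the foliation by vertical lines $\{x=\mathrm{const}\}$ is preserved by every shear, so a shear can never move a point off its vertical line into a genuinely different fiber except through the reparametrization $\phi$ of the base. Consequently, if $\gamma$ is tangent to a vertical line at even one point it must, by homogeneity, be tangent to a vertical direction everywhere, forcing $\gamma=\{x=c\}$; otherwise $\gamma$ is everywhere transverse to the vertical foliation and hence locally a graph $y=f(x)$, which by the first half of the argument is real-analytic.

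The main obstacle I anticipate is not in invoking Proposition~\ref{Ghom}, but in the dichotomy argument of the last paragraph: one must rule out a curve that is vertical at some points and graphical at others, i.e.\ show that ``verticality'' is a homogeneity-invariant local property. Because a shear sending $p$ to $q$ maps the vertical line through $p$ to the vertical line through $q$ (as $x\mapsto\phi(x)$ sends fibers to fibers), the tangent line to $\gamma$ at $p$ is vertical if and only if the tangent line at $q$ is vertical; this should give the clean $0$--$1$ dichotomy needed. Care is required only at the base point $0$, where the germ must be compatible with the ${\rm Shr}^\omega(\mathbb R^2,0)$ normalization, and where the uniqueness clause in Lemma~\ref{invshr} can be used to exclude pathological graphs with nontrivial isotropy. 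With this invariance established the classification closes and Theorem~\ref{shearhomo} follows.
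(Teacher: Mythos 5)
Your proposal is correct and takes essentially the same route as the paper, which deduces Theorem \ref{shearhomo} from Lemma \ref{invshr} together with Proposition \ref{Ghom}, handling the vertical case exactly by your dichotomy: shears preserve the vertical foliation, so vertical tangency propagates by homogeneity, and a nowhere-vertical curve is locally a graph over $\mathbb R_x$. The one adjustment is of ordering: Lemma \ref{invshr} establishes (ICP) only for graph curves, so the dichotomy must precede --- not follow --- the appeal to Proposition \ref{Ghom} (a point the paper itself makes in the remark before the proof of Lemma \ref{invshr}), while your explicit checks of the ``if'' direction and of closedness/uniformity merely fill in steps the paper treats as routine.
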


Although Theorem \ref{shearhomo} is about a group that might not be considered very \lq\lq natural\rq\rq\, it can be applied to obtain the following corollary (which does not follow from the holomorphic version):

\begin{cor}\label{diffquot}
Let $I\subset \mathbb R$, $0\in I$, be an interval and let $f\in C^\infty(I)$. Suppose that, for all fixed $t\in I$, the function $g_t(x) = f(x+t) - f(x)$ is real-analytic on a neighborhood $I_t$ of $0$ (depending on $t$). Then $f\in C^\omega(I)$.
\end{cor}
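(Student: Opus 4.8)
The plan is to deduce this from Theorem~\ref{shearhomo} by encoding the hypothesis on the difference quotients $g_t(x)=f(x+t)-f(x)$ as a statement about shear-homogeneity of the graph $\gamma=\{y=f(x)\}$. The key observation is that for each fixed $t$, the map $\psi_t(x,y)=(x+t,\,y+g_t(x))$ is a shear, since $g_t$ is assumed real-analytic near $0$, and it sends the graph of $f$ to itself: indeed $\psi_t(x,f(x))=(x+t,\,f(x)+g_t(x))=(x+t,\,f(x+t))$, which again lies on $\gamma$. Thus the collection $\{\psi_t\}_{t\in I}$ consists of analytic shears preserving $\gamma$, and by construction $\psi_t$ carries the point $(0,f(0))$ to an arbitrary point $(t,f(t))$ of $\gamma$.

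First I would verify that this gives genuine ${\rm Shr}^\omega(\mathbb R^2,0)$-homogeneity of $\gamma$ near the origin. The germ of $\psi_t$ at the base point $(0,f(0))$ need not be centered at the origin, so I would apply a translation to recenter it and check that the resulting germ still lies in ${\rm Shr}^\omega(\mathbb R^2,0)$: conjugating a shear by translations again yields a shear, since translations act affinely on both coordinates and preserve the triangular form $(\phi(x),\,y+h(x))$. This reduces the homogeneity requirement of Theorem~\ref{shearhomo} (which demands germs based at the origin after translation) to the hypothesis that each $g_t$ is analytic, so that each recentered germ is analytic. Since the $g_t$ connect any two points of $\gamma$ lying over $I$, the graph $\gamma$ is ${\rm Shr}^\omega(\mathbb R^2,0)$-homogeneous as a smooth curve (note $f\in C^\infty(I)$ ensures $\gamma$ is a smooth curve, so the hypotheses of Theorem~\ref{shearhomo} apply).

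With homogeneity established, Theorem~\ref{shearhomo} forces $\gamma$ to be either a vertical line $\{x=c\}$ or the graph $\{y=h(x)\}$ of a real-analytic function $h$. Since $\gamma$ is by construction the graph of $f$, which passes every vertical line at most once, the vertical-line alternative is excluded; hence $\gamma=\{y=h(x)\}$ with $h$ real-analytic, and by uniqueness of the graph representation $f=h$ near $0$, so $f\in C^\omega$ in a neighborhood of $0$. To upgrade this to $f\in C^\omega(I)$ on the full interval, I would note that the hypothesis is invariant under translating the base point: for any fixed $x_0\in I$, the differences $f((x_0+x)+t)-f(x_0+x)$ are again analytic in $x$ for each small $t$, so the same argument applied to the translated function shows $f$ is analytic near $x_0$; running this over all $x_0\in I$ yields $f\in C^\omega(I)$.

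The main obstacle I expect is the bookkeeping around recentering: I must confirm that translating the shears $\psi_t$ to be based at the origin keeps them within ${\rm Shr}^\omega(\mathbb R^2,0)$ and that the definition of $\mathcal G$-homogeneity used in Theorem~\ref{shearhomo} is met with the base point of $\gamma$ taken anywhere, not just over $0$. A secondary subtlety is that the analyticity of $g_t$ is only assumed on a neighborhood $I_t$ depending on $t$, so the common domain on which all relevant shears are defined may shrink; but since homogeneity is a local, two-point condition, for any fixed pair of points it suffices to use a single $t$ and its neighborhood $I_t$, so no uniform choice of domain is needed.
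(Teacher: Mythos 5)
Your proposal is correct and is essentially the paper's own proof: the paper's argument is precisely the one-line observation that the hypothesis makes the graph $\{y=f(x)\}$ homogeneous under the shears $(x,y)\mapsto(x+t,\,y+g_t(x))$, after which Theorem~\ref{shearhomo} applies (the vertical-line alternative being impossible for a graph). Your extra care about recentering germs at the origin and about the $t$-dependent domains $I_t$ fills in details the paper leaves implicit, and is handled correctly since the paper's definition of $\mathcal G$-homogeneity only requires $E_{p_0}^{\mathcal G}=\gamma$ for a single base point.
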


We can also prove the analogous results for affine transformations:
\begin{lemma}\label{linicp}
The group $GL(2,\mathbb R)$ satisfies (ICP). More precisely, let $A\in GL(2,\mathbb R)$, $A$ not involutive, and let $\gamma$ be an invariant curve of class $C^\infty$ for $A$ passing through $0$. Then $\gamma$ is real-analytic (in fact real-algebraic).
\end{lemma}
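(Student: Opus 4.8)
The plan is to exploit the fact that $A$ is linear with fixed point $0$, so that its local dynamics near $0$ is completely transparent, and to translate $A$-invariance of $\gamma$ into a functional equation for a graphing function. First I would record two preliminary reductions. Since $\gamma$ is a $C^\infty$ curve through the fixed point $0$ and $A$ is linear, $A$ carries the tangent line $T_0\gamma$ to $T_0(A\gamma)$; as $A\gamma\subseteq\gamma$ are $1$-dimensional submanifold germs through $0$ they must coincide near $0$, so $A\gamma=\gamma$ as germs and $T_0\gamma$ is an $A$-invariant line, i.e.\ an eigenline of $A$. In particular, if $A$ has non-real eigenvalues there is no real eigenline, hence no such regular invariant curve exists and the statement holds vacuously (note that every $A$ with non-real eigenvalues is automatically non-involutive). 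The equality $A\gamma=\gamma$ also lets me replace $A$ by $A^{-1}$ at will. From now on I assume $A$ has real eigenvalues and, after a linear change of coordinates (which preserves real-analyticity and real-algebraicity), that $A$ is in real canonical form with $T_0\gamma$ equal to the $x$-axis; thus $\gamma=\{y=f(x)\}$ with $f\in C^\infty$, $f(0)=f'(0)=0$.

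In the diagonal case $A=\mathrm{diag}(\lambda,\mu)$ (including the scalar case $\mu=\lambda$), invariance reads $f(\lambda x)=\mu f(x)$. Replacing $A$ by $A^{-1}$ if necessary I may assume $|\lambda|\le 1$. If $|\lambda|=1$ then $\lambda=\pm1$: when $\lambda=1$ the equation gives $(\mu-1)f\equiv0$ and when $\lambda=-1$ iterating gives $(\mu^2-1)f\equiv0$, and since $A$ is not involutive one checks $\mu\neq1$, resp.\ $\mu^2\neq1$, forcing $f\equiv0$, so $\gamma$ is the $x$-axis. If $|\lambda|<1$, matching Taylor coefficients in $f(\lambda x)=\mu f(x)$ shows that the $n$-th coefficient $a_n$ vanishes unless $\lambda^n=\mu$, and since $|\lambda|<1$ there is at most one such integer $n$. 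Setting $\phi=f-a_nx^n$ (interpreted as $\phi=f$ if no such $n$ exists), the function $\phi$ satisfies the same equation and is flat at $0$; iterating $\phi(x)=\mu^{-k}\phi(\lambda^k x)$ and inserting the flat bound $|\phi(y)|\le C_N|y|^N$ gives $|\phi(x)|\le(|\lambda|^N/|\mu|)^kC_N|x|^N$, which tends to $0$ as $k\to\infty$ once $N$ is chosen with $|\lambda|^N<|\mu|$; hence $\phi\equiv0$ and $f=a_nx^n$. In every sub-case $f$ is a monomial (possibly $0$), so $\gamma$ is real-algebraic.

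In the non-diagonalizable case $A=\left(\begin{smallmatrix}\lambda&1\\0&\lambda\end{smallmatrix}\right)$, the unique eigenline is the $x$-axis and invariance reads $f(\lambda x+f(x))=\lambda f(x)$; using $A^k=\left(\begin{smallmatrix}\lambda^k&k\lambda^{k-1}\\0&\lambda^k\end{smallmatrix}\right)$ this upgrades to $f(\lambda^k x+k\lambda^{k-1}f(x))=\lambda^k f(x)$ for all $k\in\mathbb Z$. Again I reduce to $|\lambda|\le1$. If $|\lambda|<1$, Taylor matching at lowest order forces $\lambda^{m-1}=1$, which is impossible, so $f$ is flat; inserting the flat bound into the iterated identity (whose argument is $O(k|\lambda|^k)$ while the right-hand side carries a factor $\lambda^k$) yields $|f(x)|\le C_N' k^N|\lambda|^{k(N-1)}\to0$ for $N\ge2$, so $f\equiv0$. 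The case $\lambda=-1$ is reduced to the shear by passing to $A^2=\left(\begin{smallmatrix}1&-2\\0&1\end{smallmatrix}\right)$, a nontrivial shear leaving $\gamma$ invariant. This leaves the genuine shear $\lambda=1$, i.e.\ $f(x+f(x))=f(x)$, which is the heart of the matter.

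For the shear I would argue dynamically rather than by scaling, since the transverse multiplier is now $1$ and no decay is available. The map $\sigma(x)=x+f(x)$ is a $C^\infty$ germ with $\sigma(0)=0$, $\sigma'(0)=1$, hence strictly increasing on a small interval, and $f\circ\sigma=f$, so $f$ is constant along every $\sigma$-orbit. Fix $x_1>0$ small with $f(x_1)\neq0$; the monotone orbit $\{\sigma^{-k}(x_1)\}_{k\ge0}$ (if $f(x_1)>0$) or $\{\sigma^{k}(x_1)\}_{k\ge0}$ (if $f(x_1)<0$) is monotone, bounded, and stays in $(0,x_1)$, so it converges to a fixed point $z^\ast$ of $\sigma$, necessarily a zero of $f$. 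Continuity then gives $f(x_1)=f(z^\ast)=0$, a contradiction; the same argument applies on $(-\varepsilon,0)$, so $f\equiv0$ and $\gamma$ is the $x$-axis. Assembling the cases, $\gamma$ is always the graph of a monomial or a line, hence real-algebraic and in particular real-analytic. The main obstacle is precisely this shear case: here the naive self-similarity produces no contraction, and one must instead use that $f$ is an invariant of a parabolic one-dimensional germ whose orbits accumulate only at fixed points of $\sigma$.
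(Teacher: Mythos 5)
Your reductions (eigenline argument, disposal of non-real eigenvalues, the diagonal case via Taylor matching plus the flatness estimate, and the contracting Jordan case via the iterated identity $f(\lambda^k x+k\lambda^{k-1}f(x))=\lambda^k f(x)$) are all correct, and in places sharper than the paper, which instead differentiates the invariance equation $k$ times in the diagonal case and runs a multiplicative estimate on $f''$ along orbits of $\rho(x)=\lambda x+f(x)$ in the Jordan case. But your shear case --- which you correctly identify as the heart of the matter --- contains a genuine error. You claim that the backward orbit $\{\sigma^{-k}(x_1)\}$ (when $f(x_1)>0$) is monotone, bounded, and \emph{stays in} $(0,x_1)$, hence converges to a fixed point $z^\ast$ of $\sigma$ with $f(z^\ast)=f(x_1)$ by continuity. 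The very invariance $f\circ\sigma=f$ that you invoke rules this out: if $z=\sigma^{-1}(y)$ then $f(z)=f(\sigma(z))=f(y)$, so $z=y-f(z)=y-f(y)$, and by induction $\sigma^{-k}(x_1)=x_1-kf(x_1)$ \emph{exactly}. The backward orbit is an arithmetic progression with constant step $f(x_1)$; it exits $(0,x_1)$ and crosses $0$ after $\lceil x_1/f(x_1)\rceil$ steps, accumulating at no interior fixed point. No $z^\ast$ is produced and no contradiction arises: the orbit only comes within distance $f(x_1)$ of the origin, which is perfectly compatible with $f(x_1)\neq0$ and continuity of $f$. (The forward orbit when $f(x_1)<0$ fails identically.) The general principle you are appealing to --- monotone orbits of an increasing interval map converge to fixed points --- requires the orbit to remain in a compact invariant interval, which is precisely what the constant step size destroys. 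Since your Jordan case $\lambda=-1$ is routed through the shear via $A^2$, it inherits the same gap.

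The missing idea (and it is what the paper does) is to use the arithmetic-progression structure \emph{globally} rather than hoping for local convergence: invariance under all powers $A^j$ gives $f(x+jf(x))=f(x)$ for every $j\in\mathbb Z$, with all intermediate points legitimate since they lie between $x$ and $x+jf(x)$. If $f\not\equiv0$ near $0$, choose $x'_j\to 0$ with $f(x'_j)\neq0$; by continuity $f(x'_j)\to0$, so the level set $\{f=f(x'_j)\}$ contains an arithmetic progression of step $f(x'_j)$ and is therefore $|f(x'_j)|$-dense near the origin. For any fixed $x_0$ close to $0$, setting $x_j=x'_j+\bigl\lfloor (x_0-x'_j)/f(x'_j)\bigr\rfloor f(x'_j)$ gives $|x_0-x_j|\leq|f(x'_j)|$, hence $x_j\to x_0$, while $f(x_j)=f(x'_j)\to0$; continuity forces $f(x_0)=0$, so $f\equiv0$ near $0$, a contradiction. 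In other words, the contradiction comes not from a single orbit with fixed basepoint (whose step size is bounded away from zero) but from varying the basepoint toward $0$ so that the step sizes shrink and the level sets of $f$ become dense. With this replacement for your third paragraph, the rest of your proof stands.
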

The proof in this case can be achieved without referring to the theory of dynamical systems. As before we get as a corollary
\begin{theorem}\label{linhomo}
Let $\gamma\subset \mathbb R^2$ be a curve of class $C^\infty$, locally homogeneous under affine transformations. Then $\gamma$ is real-analytic.
\end{theorem}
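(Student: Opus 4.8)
The plan is to deduce Theorem \ref{linhomo} from Proposition \ref{Ghom} by taking $\mathcal G = GL(2,\mathbb R)$, regarded as a subgroup of ${\rm Diff}^\omega(\mathbb R^2,0)$ via the germs at $0$ of invertible linear maps. Since an affine map fixes $0$ only after translation, the first step is to check that affine homogeneity of $\gamma$ is exactly $GL(2,\mathbb R)$-homogeneity in the sense defined above. Given $p,q\in\gamma$, local homogeneity provides an affine diffeomorphism $\psi(x)=Ax+b$, $A\in GL(2,\mathbb R)$, with $\psi(p)=q$ and $\psi(\gamma)=\gamma$ near $p$; after shrinking a neighborhood $U$ of $p$ we may arrange that $\psi(U\cap\gamma)=\gamma\cap\psi(U)$. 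The relation $\psi(p)=q$ gives $Ap+b=q$, so the translated germ $\tau_{-q}\circ\psi\circ\tau_p$ sends $x\mapsto A(x+p)+b-q=Ax$; its germ at $0$ is therefore the linear germ $A\in\mathcal G$. Hence $\gamma$ is $\mathcal G$-homogeneous.

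The second step is to verify that $\mathcal G=GL(2,\mathbb R)$ meets the structural hypotheses of Proposition \ref{Ghom}. That it is a subgroup is clear, since linear germs are closed under composition and inversion and contain the identity. For closedness, note that if a sequence of linear germs converges in the direct-limit topology, then by definition suitable representatives converge uniformly on a fixed neighborhood of $0$; evaluating at a basis shows the corresponding matrices converge, so the limit germ is again linear, and being an element of ${\rm Diff}^\omega(\mathbb R^2,0)$ it is a diffeomorphism germ, whence invertible. For uniformity, observe that if $\psi\in{\rm Diff}^\omega(U)$ has linear germ at $0$ on a connected domain $U$, then by analytic continuation $\psi$ coincides with that linear map on all of $U$; consequently, for every $q\in U$ the conjugate $\tau_{-\psi(q)}\circ\psi\circ\tau_q$ is again the linear map $x\mapsto Ax$, whose germ at $0$ lies in $\mathcal G$.

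With these verifications in place, property (ICP) for $\mathcal G=GL(2,\mathbb R)$ is furnished directly by Lemma \ref{linicp}. Thus $\mathcal G$ is a closed, uniform subgroup of ${\rm Diff}^\omega(\mathbb R^2,0)$ satisfying (ICP), and $\gamma$ is $\mathcal G$-homogeneous; Proposition \ref{Ghom} then yields that $\gamma$ is real-analytic. I do not expect a genuine obstacle in this argument: the substance of the statement has already been isolated into Lemma \ref{linicp}, which supplies the algebraic input that a non-involutive linear automorphism forces an invariant curve to be real-algebraic, and into Proposition \ref{Ghom}. The only points requiring mild care are bookkeeping ones, namely correctly interpreting the germ-at-$p$ condition through translation to $0$, and shrinking the domain $U$ so that the image identity $\psi(U\cap\gamma)=\gamma\cap\psi(U)$ holds on the nose rather than merely as an inclusion.
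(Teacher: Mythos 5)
Your proposal is correct and follows exactly the paper's route: Theorem \ref{linhomo} is obtained there as an immediate consequence of Lemma \ref{linicp} (which supplies (ICP) for $GL(2,\mathbb R)$) combined with Proposition \ref{Ghom}, with the closedness and uniformity of $GL(2,\mathbb R)$ noted in section \ref{defnot} as immediate. Your explicit verifications of these structural hypotheses and of the translation bookkeeping $\tau_{-q}\circ\psi\circ\tau_p = A$ simply fill in details the paper leaves to the reader.
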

More precisely, the method shows that a curve $\gamma$ which is locally homogeneous under affine transformations is the integral curve of an affine vector field. It does not follow, however, that $\gamma$ is a conic section, or even algebraic: for instance the curve $\gamma=\{y=e^x\}$ is homogeneous under affine transformations (one can see this by considering the affine diffeomorphisms $\psi_t(x,y)=(x+t, e^ty)$, as indeed $\gamma$ is an integral curve for the vector field $\frac{\partial}{\partial x} + y\frac{\partial}{\partial y}$).

Perhaps a bit surprisingly, these results do not extend to the full group ${\rm Diff}^\omega(\mathbb R^2,0)$: indeed, there exist nowhere analytic planar curves of class $C^\infty$, with divergent Taylor expansion, whose analytic isotropy group is infinite (in \cite{De}, examples of such curves are constructed and the conditions for this to happen are studied). Thus, the procedure we adopt here cannot be directly generalized to the full group of real-analytic diffeomorphisms. It seems likely, however, that a better understanding of the isotropies of nowhere analytic curves 
would be useful for this approach.

\

\subsection*{Organization of the paper}

\

\

The proofs of the main results are contained in section \ref{homo}. In section \ref{defnot} we introduce the notation and certain definitions that we will employ in the rest of the paper; some of these might not be completely standard, for instance our use of the term \lq\lq analytic shear\rq\rq. In section \ref{Cartan} we review some results from the paper by H. Cartan \cite{Ca} mentioned above, which will be needed for the construction of a non-trivial vector field. Finally, in section \ref{lcs} we discuss the case of analytically homogeneous, locally closed sets. As a consequence of Theorem \ref{Wilkin} from \cite{RSS}, \cite{Sk} and \cite{Wi} we obtain the following

\begin{theorem}\label{disopcur}
Let $\mathcal G\leq {\rm Diff}^\omega(\mathbb R^2,0)$ be a subgroup satisfying the assumptions of Proposition \ref{Ghom}. Then every $\mathcal G$-homogeneous, locally closed subset $K\subset \mathbb R^2$ is either discrete, an open subset, or a real-analytic curve.
\end{theorem}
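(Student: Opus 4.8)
The plan is to reduce the statement to Theorem \ref{Wilkin} together with Proposition \ref{Ghom}. First I would observe that every germ in $\mathcal G\leq {\rm Diff}^\omega(\mathbb R^2,0)$ is in particular the germ of a $C^r$ diffeomorphism for every $r\in\mathbb N$. Consequently, if $K$ is $\mathcal G$-homogeneous then the local diffeomorphisms realizing the homogeneity are real-analytic, hence $C^r$, and $K$ is $C^r$-homogeneous in the sense of Theorem \ref{Wilkin} for every $r$ (the germ constraint imposed by $\mathcal G$ only makes $\mathcal G$-homogeneity more restrictive, so it is inherited). Applying Theorem \ref{Wilkin} for each fixed $r$, I obtain that $K$ is a $C^r$ submanifold of $\mathbb R^2$; since near any of its points $K$ is then the graph of a $C^r$ function over its (intrinsically determined) tangent space for every $r$, that function is $C^\infty$, and $K$ is in fact a $C^\infty$ submanifold.

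Next I would use homogeneity to pin down the dimension. Since for any $p,q\in K$ there is a local diffeomorphism carrying a neighborhood of $p$ in $K$ onto a neighborhood of $q$ in $K$, all points of $K$ have locally diffeomorphic neighborhoods inside $K$; in particular $K$ is a submanifold of constant dimension $d\in\{0,1,2\}$. A mixed configuration, such as an isolated point together with an arc, is excluded, as no local diffeomorphism can map a point where $K$ is $1$-dimensional to one where it is $0$-dimensional. The three possible values of $d$ correspond exactly to the three alternatives in the statement: if $d=0$ then $K$ is a $0$-dimensional submanifold, hence discrete; if $d=2$ then $K$ is a $2$-dimensional submanifold of the plane, hence an open subset.

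The remaining case is $d=1$, where $K$ is a smooth ($C^\infty$) curve $\gamma$. Here I would invoke Proposition \ref{Ghom}: by hypothesis $\mathcal G$ is closed, uniform, and satisfies (ICP), so the proposition applies to the $\mathcal G$-homogeneous smooth curve $\gamma=K$ and yields that $\gamma$ is real-analytic. Combining the three cases completes the proof.

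As for the main obstacle, there is no deep new difficulty, since the theorem is essentially a packaging of the two cited inputs. The one point requiring genuine care is the faithful translation between the two notions of homogeneity: checking that $\mathcal G$-homogeneity (with its germ condition and real-analytic representatives) does fall under the hypotheses of Theorem \ref{Wilkin}, and, symmetrically, that the $C^\infty$ submanifold produced there is exactly the object to which Proposition \ref{Ghom} applies. This, together with the elementary but necessary argument that homogeneity forces the submanifold to be of pure dimension, is the whole content of the proof.
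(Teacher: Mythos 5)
Your proof is correct and follows exactly the route the paper takes: Theorem \ref{disopcur} is stated there as an immediate corollary of Theorem \ref{Wilkin} (applied for all $r$, giving a smooth submanifold of constant dimension by homogeneity) combined with Proposition \ref{Ghom} in the one-dimensional case. Your write-up merely makes explicit the dimension trichotomy and the passage from $C^r$ for all $r$ to $C^\infty$, both of which the paper leaves implicit.
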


\section{Definitions and notation} \label{defnot}
\subsection{Sets of analytic diffeomorphisms}
Let $U\subset \mathbb R^n$ be an open subset, and let $\psi: U \to \mathbb R^n$ be a map of class at least $C^1$. For any $p\in U$, we denote by $J\psi(p)$ the Jacobian determinant of $\psi$ at the point $p$. We shall denote by ${\rm Diff}(U)$ the set of all maps $\psi:U \to \mathbb R^n$ of class $C^1$ such that $J\psi(p)\neq 0$ for all $p\in U$. We will equip ${\rm Diff}(U)$ with the topology of uniform convergence on compact subsets of $U$.
For any $k\in \mathbb N \cup \{\infty, \omega\}$, we employ the notation ${\rm Diff}^k(U)$ for the set of maps $\psi\in {\rm Diff}(U)$ which are of class $C^k$. Moreover, ${\rm Diff}^k_*(U)$ will stand for the set of all the mappings $\psi\in {\rm Diff}^k(U)$ which admit an inverse $\psi^{-1}\in {\rm Diff}^k(\psi(U))$.

Given an open subset $U\subset \mathbb C^n$, we define the sets ${\rm Hol}(U)$, ${\rm Hol}_*(U)$ in an analogous way in the category of holomorphic maps $\psi: U\to \mathbb C^n$.

We will always denote the identity map by $Id$, whether in the context of analytic diffeomorphisms or as the corresponding element in a group of germs. Moreover, given a map $\psi$ and $k\in \mathbb N$, we denote by $\psi^{\circ k}$ the composition of $\psi$ with itself performed $k$ times (we are borrowing this notation from \cite{Br}). The domain of the iteration will in general depend on $k$, and we include the possibility of it being the empty set. If $\psi$ is invertible we also extend this notation to $k\in \mathbb Z$, with the convention $\psi^{\circ 0} = Id$.

\subsection{Groups of germs of analytic maps}\label{germanmaps}
Fix, now, $p\in \mathbb R^n$. We denote by ${\rm Diff}^\omega(\mathbb R^n,p)$ the group of germs at $p$ of real-analytic maps $\psi:U\to \mathbb R^n$, where $U$ is an open neighborhood of $p$ in $\mathbb R^n$, such that $\psi(p)=p$ and $J\psi(p)\neq 0$. If $p\in \mathbb C^n$, we define ${\rm Hol}(\mathbb C^n,p)$ in a similar way. 

Endow ${\rm Diff}^\omega(\mathbb R^n,p)$ with its topology as a direct limit, that is, given $\{h_j\}_{j\in \mathbb N}\subset {\rm Diff}^\omega(\mathbb R^n,p)$ and $h\in {\rm Diff}^\omega(\mathbb R^n,p)$ we have $h_j\to h$ if and only if there is a neighborhood $U$ of $p\in \mathbb R^n$ and analytic diffeomorphisms $\tilde h_j:U\to \tilde h_j(U)\subset \mathbb R^n$, $\tilde h:U\to \tilde h(U)\subset \mathbb R^n$ such that the germ induced by $\tilde h_j$ (resp. $\tilde h$) at $p$ is  $h_j$ (resp. $h$) and $h_j\to h$ uniformly on $U$. Let $\mathcal G\leq {\rm Diff}^\omega(\mathbb R^n,p)$ be a subgroup. Then $\mathcal G$ is a closed subgroup if the following property holds: 

\emph{Let $g\in {\rm Diff}^\omega(\mathbb R^n,p)$ be the germ at $p$ of a real-analytic map $g:U\to\mathbb R^n$, and suppose that there exist a domain $U'\subset U$ and a sequence $\{g_j\}_{j\in \mathbb N}$ of maps $U'\to \mathbb R^n$ such that
\begin{itemize}
\item the germ induced by $g_j$ at $p$ belongs to $\mathcal G$ for each $j\in \mathbb N$;
\item $g_j\to g$ as $j\to \infty$, uniformly over $U'$.
\end{itemize}
Then $g\in \mathcal G$.}

With this definition, if $n=2m$ and $p\in \mathbb R^n \cong \mathbb C^m$ the group $\mathcal G= {\rm Hol}(\mathbb C^m,p)$ can be naturally identified with a closed subgroup of ${\rm Diff}^\omega(\mathbb R^n,p)$.  Other natural examples of closed subgroups of ${\rm Diff}^\omega(\mathbb R^n,0)$ are provided by $GL(n,\mathbb R)$ and any closed (in the topology of Lie groups) $G\leq GL(n,\mathbb R)$.

We will also be interested in another, somewhat more artificial subgroup of analytic maps, depending on the choice of a special direction in $\mathbb R^n$. Fix coordinates in $\mathbb R^n$ of the form $(x,y)$, where $x=(x_1,\ldots, x_{n-1})\in \mathbb R^{n-1}$ and $y\in \mathbb R$. Given an open subset $U\subset \mathbb R^n$ of the form $U=U_1\times \mathbb R$, $U_1\subset \mathbb R^{n-1}_x$, we call a map $\psi\in {\rm Diff}^\omega(U)$ a \emph{(real-analytic) shear} if for all $(x,y)\in U$ we have
\[\psi(x,y)= (\phi(x), y + h(x))\]
where $\phi\in {\rm Diff}^\omega(U_1)$ and $h\in C^\omega(U_1)$. We denote the class of analytic shears on $U$ as ${\rm Shr}^\omega(U)$, and for $p\in \mathbb R^n$ we define the set of germs ${\rm Shr}^\omega(\mathbb R^n,p)$ accordingly. It is straightforward to check that the set ${\rm Shr}^\omega(\mathbb R^n,p)$ constitutes a subgroup of ${\rm Diff}^\omega(\mathbb R^n,p)$, and that moreover it is a closed subgroup in the sense specified above. Algebraically, ${\rm Shr}^\omega(\mathbb R^n,0)$ is isomorphic to the semidirect product ${\rm Diff}^\omega(\mathbb R^{n-1},0) \ltimes_{\varphi} C^\omega(\mathbb R^{n-1},0)$, where $C^\omega(\mathbb R^{n-1},0)$ is regarded as an additive group and the homomorphism $\varphi:{\rm Diff}^\omega(\mathbb R^{n-1},0) \to {\rm Aut}(C^\omega(\mathbb R^{n-1},0))$ is given by $\varphi(\psi) f = f \circ \psi$ for $\psi \in{\rm Diff}^\omega(\mathbb R^{n-1},0)$, $f \in C^\omega(\mathbb R^{n-1},0)$.

For any $q\in \mathbb R^n$, denote by $\tau_q:\mathbb R^n\to\mathbb R^n$ the translation $\tau_q(x) = x + q$. We will say that a subgroup $\mathcal G\leq {\rm Diff}^\omega(\mathbb R^n,0)$ is \emph{uniform} if the following condition is satisfied: for any domain $U\subset \mathbb R^n$, $0\in U$, and any $\psi \in {\rm Diff}^\omega(U)$ such that the germ of $\psi$ at $0$ belongs to $\mathcal G$, for all $q\in U$ the germ at $0$ of the map $ \tau_{-\psi(q)} \circ \psi \circ \tau_{q}$ also belongs to $\mathcal G$. In other words, we require the representatives of the germs of $\mathcal G$ to still induce a germ of $\mathcal G$ at any point of their domain (after suitable translations). It is immediate to check that ${\rm Hol}(\mathbb C^m,0)$, $GL(n,\mathbb R)$ and ${\rm Shr}^\omega(\mathbb R^n,p)$ all conform to this definition.

Finally, we will need to single out the subset $\mathcal R\subset {\rm Diff}^\omega(\mathbb R^n,0)$ of the germs of order two (or \emph{involutions}), i.e\ $\psi\in \mathcal R \Leftrightarrow \psi^{\circ 2} = Id$ (note that $\mathcal R$ is not  a subgroup of ${\rm Diff}^\omega(\mathbb R^n,0)$). It is well-known that each element of $\mathcal R$ is conjugated to its linear part: indeed, let $\psi\in \mathcal R$ and put $A = d\psi(0)$. We define the germ $\varphi$ of an analytic mapping around $0$ by putting $\varphi(x) = x + A\psi(x)$. Since $d\varphi(0) = Id + A^2 = 2Id$, $\varphi$ is actually a local diffeomorphism. Furthermore, $\varphi \circ \psi = \psi + A \psi^{\circ 2} = \psi + A$ and $A\circ \varphi = A(Id + A\psi) = A + A^2\psi = A + \psi$, so that $A = \varphi\circ \psi \circ \varphi^{-1}$. In particular, in dimension $n=2$ an element $\psi\in \mathcal R$ either coincides with the identity or is conjugated to $(x,y) \to (-x,y)$ or $(x,y)\to (-x,-y)$. In each case it is clear that $\psi$ admits nowhere analytic invariant curves of class $C^\infty$: that is the reason for which the elements of $\mathcal R$ need to be excluded in the lemmas of section \ref{aninvcur}.

\subsection{Local equivalence problem}\label{locequivpro}
Let $p=0\in \mathbb R^n$ and let $\mathcal G$ be a subgroup of ${\rm Diff}^\omega(\mathbb R^n,0)$. Let $(M_1,0)$, $(M_2,0)$ be germs at $0$ of smooth submanifolds of $\mathbb R^n$. We say that such germs are \emph{(locally) equivalent under $\mathcal G$}, and we write
\[(M_1,0) \stackrel{\mathcal G}{\sim}(M_2,0)\]
if there exist neighborhoods $U_1,U_2$ of $0$ in $\mathbb R^n$, representatives $M_1\subset U_1$, $M_2\subset U_2$ of the two germs of manifolds, and a map $\psi\in {\rm Diff}^\omega(U_1)$ whose germ induced at $0$ belongs to $\mathcal G$ such that $\psi(M_1)=M_2$. Since $\mathcal G$ is taken to be a subgroup, $\stackrel{\mathcal G}{\sim}$ is an equivalence relation on the set of germs $(M,0)$. If there is no ambiguity on the group which is referred to, we will just denote the equivalence relation by $\sim$ (typically in the case when $\mathcal G = {\rm Diff}^\omega(\mathbb R^n,0)$).

It is worth remarking that, considering the standard embedding $\mathbb R^n\subset \mathbb C^n$, two manifold germs $(M_1,0)$ and $(M_2,0)$ contained in $\mathbb R^n$ are equivalent under ${\rm Diff}^\omega(\mathbb R^n,0)$ if and only if (seen as germs of manifolds in $\mathbb C^n$) they are equivalent under ${\rm Hol}(\mathbb C^n,0)$. This can be deduced from the following more general lemma: consider, in $\mathbb R^n = \mathbb R^m \times \mathbb R^{n-m}$, coordinates $x=(x',x'')$, where $x'=(x_1,\ldots x_m)$, $x''=(x_{m+1},\ldots, x_n)$. Moreover, denote by $\pi', \pi''$ the projections defined by $\pi'(x) = x', \pi''(x)=x''$.

\begin{lemma}\label{subspace}
Let $(M_1,0)$, $(M_2,0)$ be germs of submanifolds of $\mathbb R^m$ around $0$. Then $M_1$ and $M_2$ are equivalent under ${\rm Diff}^\omega(\mathbb R^n,0)$ if and only if they are equivalent under ${\rm Diff}^\omega(\mathbb R^m,0)$.
\end{lemma}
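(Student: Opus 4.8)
The plan is to establish the two implications separately; the forward direction is immediate, and the reverse one carries all the content. For the ``if'' direction, given $\psi\in{\rm Diff}^\omega(\mathbb R^m,0)$ with $\psi(M_1)=M_2$, I would simply set $\Psi(x',x'')=(\psi(x'),x'')$. This is a real-analytic diffeomorphism germ of $\mathbb R^n$ fixing $0$ (its Jacobian at $0$ equals $J\psi(0)\neq 0$) and satisfying $\Psi(M_1)=M_2$, so that $M_1$ and $M_2$ are equivalent under ${\rm Diff}^\omega(\mathbb R^n,0)$.

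For the ``only if'' direction, suppose $\Psi\in{\rm Diff}^\omega(U_1)$ has germ in ${\rm Diff}^\omega(\mathbb R^n,0)$ and $\Psi(M_1)=M_2$. The obstruction to merely restricting $\Psi$ is that $d\Psi(0)$ need not preserve the subspace $\mathbb R^m$, so the naive candidate $\pi'\circ\Psi|_{\mathbb R^m}$ may fail to be a local diffeomorphism of $\mathbb R^m$. The idea is to correct this by first straightening the image $N:=\Psi(\mathbb R^m)$, a real-analytic $m$-dimensional submanifold germ through $0$, back onto $\mathbb R^m$. The crucial observation is that $M_2=\Psi(M_1)\subseteq\Psi(\mathbb R^m)=N$, so $M_2$ and its straightened copy will both remain inside $\mathbb R^m$.

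Concretely, I would construct a real-analytic diffeomorphism germ $\Phi$ of $\mathbb R^n$ with $\Phi(N)=\mathbb R^m$ and with the additional property that the upper-left $m\times m$ block of $d\Phi(0)$ (equivalently $\pi'\circ d\Phi(0)|_{\mathbb R^m}$) is invertible, i.e.\ $d\Phi(0)(\mathbb R^m)$ is a graph over $\mathbb R^m$. This is possible because the differential at $0$ of a straightening map may be prescribed to be any linear isomorphism $\ell_0$ sending $W:=T_0N$ onto $\mathbb R^m$: one applies such an $\ell_0$ and then subtracts the graphing function of $\ell_0(N)$ over $\mathbb R^m$ (whose differential at $0$ vanishes, so the linear part is unchanged). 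It remains to choose $\ell_0$ with $\ell_0(\mathbb R^m)$ transverse to $\{0\}\times\mathbb R^{n-m}$. This reduces to the elementary fact that the two $m$-dimensional subspaces $W$ and $\mathbb R^m$ admit a common complement $W'$ in $\mathbb R^n$: mapping $\mathbb R^m$ onto $W$ and $\mathbb R^{n-m}$ onto $W'$ defines $\ell_0^{-1}$, and $W'\cap\mathbb R^m=0$ yields exactly the required transversality. Arranging the straightening to be compatible with the splitting in this way is the main point of the argument.

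With such a $\Phi$ in hand, I would conclude by combining two restrictions. On one hand, $F:=\Phi\circ\Psi$ preserves the germ of $\mathbb R^m$ (indeed $F(\mathbb R^m)=\Phi(N)=\mathbb R^m$), so $f:=F|_{\mathbb R^m}$ is an element of ${\rm Diff}^\omega(\mathbb R^m,0)$ with $f(M_1)=\Phi(M_2)\subseteq\mathbb R^m$. On the other hand, $g:=\pi'\circ\Phi|_{\mathbb R^m}$ has $dg(0)$ equal to the invertible upper-left block of $d\Phi(0)$, hence $g\in{\rm Diff}^\omega(\mathbb R^m,0)$; and since $\Phi$ maps $M_2\subseteq N$ into $\mathbb R^m$, one has $g(M_2)=\Phi(M_2)$. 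Therefore $\psi:=g^{-1}\circ f$ lies in ${\rm Diff}^\omega(\mathbb R^m,0)$ and satisfies $\psi(M_1)=g^{-1}(\Phi(M_2))=M_2$, proving that $M_1$ and $M_2$ are equivalent under ${\rm Diff}^\omega(\mathbb R^m,0)$. I expect the only delicate point to be the transversal straightening above; the rest is bookkeeping of restrictions under the identification $\mathbb R^m\cong\mathbb R^m\times\{0\}$.
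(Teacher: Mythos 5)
Your proof is correct, but the reverse implication takes a genuinely different route from the paper's. Where you straighten the image surface $N=\Psi(\mathbb R^m)$ by a nonlinear map $\Phi$ (built from a linear isomorphism $\ell_0$ chosen via a common complement of $T_0N$ and $\mathbb R^m$, followed by subtracting a graphing function) and then factor the equivalence as $g^{-1}\circ f$, the paper never straightens anything: it post-composes $\psi_0:=\Psi|_{\mathbb R^m}$ with the linear transvection $\sigma_A(x)=(x'+Ax'',x'')$, whose restriction to $\mathbb R^m$ is the identity, and chooses $A$ explicitly (a $0$-$1$ matrix determined by a row-selection argument from the rank-$m$ condition on $d\psi_0(0)$) so that the upper block $(d\psi_0)'(0)+A\,(d\psi_0)''(0)$ is invertible. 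Because $\sigma_A$ and $\pi'$ fix $M_2\subset\mathbb R^m$ pointwise, the single map $\phi=\pi'\circ\sigma_A\circ\psi_0$ already satisfies $\phi(M_1)=M_2$, and your corrective factor $g$ is simply the identity in that setup. Both arguments hinge on the same transversality requirement --- making the relevant $m$-plane a graph over $\mathbb R^m$, which you achieve with the common-complement lemma and the paper achieves with the shear --- but the paper's version is purely linear-algebraic and avoids the real-analytic implicit function theorem that your graphing step needs. Your version buys a more geometric picture (and incidentally shows a bit more, namely that the whole image $\Psi(\mathbb R^m)$ can be compatibly rectified), at the cost of heavier machinery; the paper's buys brevity and explicitness. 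In short: correct, complete modulo standard facts, and a legitimate alternative --- just note that the choice of a shear fixing $\mathbb R^m$ pointwise is exactly what collapses your two-factor bookkeeping into one projection.
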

\begin{proof}
If $\phi\in {\rm Diff}^\omega(\mathbb R^m,0)$ is such that $\phi(M_1) = M_2$, then the map $\psi(x) = (\phi(x'),x'')$ belongs to ${\rm Diff}^\omega(\mathbb R^n,0)$ and is a local equivalence between $M_1$ and $M_2$.

Vice versa, let $\psi$ be an element of ${\rm Diff}^\omega(\mathbb R^n,0)$ that sends $M_1$ to $M_2$, and let $\psi_0 = \psi_{|{\mathbb R^m}}: \mathbb R^m \to \mathbb R^n$. We write the differential of $\psi_0$ at $0$ in the following way:
\[
d\psi_0(0)=\begin{pmatrix}
			(d\psi_0)'(0) \\
			(d\psi_0)''(0)
\end{pmatrix}, 
\]
where $(d\psi_0)'(0)$ and $(d\psi_0)''(0)$ are, respectively, the blocks of $d\psi_0(0)$ of dimensions $m\times m$ and $(n-m)\times m$. We note that, since $\psi$ is a diffeomorphism, $d\psi_0(0)$ has rank $m$.

For any $m\times(n-m)$-matrix $A$ with real coefficients, we consider the linear map $\sigma_A(x) = (x' + Ax'', x'')$; notice that $\sigma_A\in SL(n,\mathbb R)$, its restriction to $\mathbb R^m$ is the identity, and that $\sigma_{A_2}\sigma_{A_1} = \sigma_{A_1 + A_2}$ for any two $m\times(n-m)$-matrices $A_1,A_2$. The differential of $\sigma_A\circ \psi_0$ at the origin can be expressed as follows:
\[
d(\sigma_A\circ \psi_0)(0) = \begin{pmatrix}
			(d\psi_0)'(0) + A\cdot (d\psi_0)''(0)  \\
			(d\psi_0)''(0) 
\end{pmatrix}.
\]
Since $d\psi_0(0)$ has rank $m$  we can choose $A$ in such a way that $(d\psi_0)'(0) + A\cdot (d\psi_0)''(0)$ is non-singular. Indeed, suppose that $(d\psi_0)'(0)$ has rank $\ell<m$; up to permutation of the coordinates, we can assume that the first $\ell$ rows of $(d\psi_0)'(0)$, together with the first $m-\ell$ rows of $(d\psi_0)''(0)$, are $m$ independent vectors. The matrix $A = (a_{ij})$ ($1\leq i \leq m$, $1\leq jÊ\leq n-m$) can be defined by setting $a_{ij} = 1$ whenever $j -i = \ell$ and $a_{ij} = 0$ otherwise.

With such a choice of $A$, we define $\phi:\mathbb R^m\to \mathbb R^m$ by $\phi = \pi' \circ \sigma_A \circ \psi_0$. Since $d\phi(0) = (d\psi_0)'(0) + A\cdot (d\psi_0)''(0)$ is non-singular, $\phi$ belongs to ${\rm Diff}^\omega(\mathbb R^m,0)$; moreover, since $\pi'_{|{\mathbb R^m}} = (\sigma_A)_{|{\mathbb R^m}} = Id$, we have (locally) $\phi(M_1) = M_2$, thus $M_1$ and $M_2$ are locally equivalent under ${\rm Diff}^\omega(\mathbb R^m,0)$.
\end{proof}
If now two germs $(M_1,0),(M_2,0)\subset \mathbb R^n\subset \mathbb C^n$ are ${\rm Hol}(\mathbb C^n,0)$-equivalent, they are in particular ${\rm Diff}^\omega(\mathbb R^{2n},0)$-equivalent, hence by Lemma \ref{subspace} they are equivalent under ${\rm Diff}^\omega(\mathbb R^n,0)$. On the other hand, a germ of map $\psi\in {\rm Diff}^\omega(\mathbb R^n,0)$ can always be locally extended to a $\Psi\in {\rm Hol}(\mathbb C^n, 0)$ such that $\Psi|_{\mathbb R^n}=\psi$. So, in this sense, the local equivalence problem under ${\rm Diff}^\omega(\mathbb R^n,0)$ is no more general than the problem in the holomorphic category.

\subsection{Equivalence loci}
Let $M\subset U \subset \mathbb R^n$ be a smooth submanifold of codimension $d$, and let $\rho:U\to \mathbb R^d$, $\rho(x) = (\rho_1(x),\ldots, \rho_d(x))$ be a (vector-valued) defining equation for $M$ of class $C^\infty$. For any $p\in M$, we define $\rho_p(x)=\rho(x+p)$ and $M_p = \{\rho_p = 0\}$. The manifold $M_p$ is of course just the image of $M$ through the translation $\tau_{-p}: x\to x-p$ taking $p$ to $0$, and the germ $(M_p,0)$ of $M_p$ at $0$ is the translate of the germ $(M,p)$ of $M$ at $p$. Therefore, the definition of $(M_p,0)$ does not depend on the choice of the defining equation $\rho$.

Fix $p\in M$ and a subgroup $\mathcal G\subset {\rm Diff}^\omega(\mathbb R^n,0)$. We are interested in the following set:
\[E_p^{\mathcal G} = \{q\in M : (M_q, 0)\stackrel{\mathcal G}{\sim} (M_p,0)  \}. \]
We call $E_p^{\mathcal G}$ the \emph{$\mathcal G$-equivalence locus} of $p$ in $M$. In other words, $q\in E_p^{\mathcal G}$ if and only if there exist a neighborhood $U$ of $p$ in $\mathbb R^n$ and a map $\psi\in {\rm Diff}^\omega(U)$ such that $\psi(M)\subset M$, $\psi(p) = q$ and the germ induced at $0$ by the map $\tau_{-q} \circ \psi \circ \tau_p$ belongs to $\mathcal G$.

If for any (and hence for all) $p\in M$ we have $E_p^{\mathcal G} = M$, we say that $M$ is \emph{$\mathcal G$-homogeneous}. When $\mathcal G = {\rm Diff}^\omega(\mathbb R^n,0)$ (or when there is no confusion on the group $\mathcal G$ under consideration) we simply write $E_p$ and we call it the \emph{equivalence locus} of $p$; accordingly, we say that a manifold $M$ is \emph{homogeneous} if it is ${\rm Diff}^\omega(\mathbb R^n,0)$-homogeneous. If $\mathcal G = {\rm Diff}^k(\mathbb R^n,0)$, $k\in \mathbb N$, we also refer to the $\mathcal G$-homogeneity as \emph{$C^k$-homogeneity}.

If $\mathcal G = \{Id\}$, the locus $E_p^{\{Id\}}$ just corresponds to the set of the translations $\tau_q$ ($q\in M$) which bring a neighborhood $U_q$ of $p$ in $M$ into a neighborhood of $q$ in $M$. The structure of the equivalence locus can be, nevertheless, non-trivial even in this case (examples of \lq\lq complicated behavior\rq\rq\ of the equivalence locus are studied in \cite{De2}); this is due to the fact that the neighborhood $U_q$ is allowed to shrink as $q$ varies in $M$.

We point out that a generalization of this notion of equivalence locus, referred to the biholomorphic equivalence up to the $k$-th order of  real-analytic germs for every $k\in \mathbb N$, was introduced in \cite{Za} under the name of \emph{weak equivalence orbit}. For a real-analytic manifold, homogeneity according to this weaker notion is sufficient to recover the strongest possible homogeneity properties like the existence of a local transitive action of a Lie group by CR automorphisms (see \cite[Theorem 1.4]{Za}). This is no longer true in the $C^\infty$ case: any smooth curve $\gamma\subset \mathbb R^2\subset \mathbb C^2$ is homogeneous in the weak sense but, of course, needs not be ${\rm Diff}^\omega(\mathbb R^2,0)$-homogeneous in general.

\section{Local groups of transformations}\label{Cartan}
In this section we are going to briefly outline some results that we will need later for the proof of Proposition \ref{Ghom}, coming from the work of H.Cartan \cite{Ca} on (pseudo)groups of holomorphic mappings. One reason for doing so (as opposed to just citing the paper) is that the language employed in \cite{Ca} does not completely coincide with the modern terminology, in particular concerning some topological or complex-analytic concepts. The main aim of the paper in question is to prove that the set of holomorphic automorphisms of a bounded domain $D\subset \mathbb C^n$ is a finite dimensional Lie group; we will, however, be interested in some of the intermediate results, which involve mappings which are not necessarily automorphisms of $D$ (or even valued in $D$) and thus fit into our framework.

Let $\mathcal E$ be a topological manifold admitting a metric $\delta$ (usually $\mathcal E=\mathbb R^n, \mathbb C^m$) and let $D\subset \mathcal E$ be a subdomain. Consider a set $G$ of continuous mappings $D\to \mathcal E$ satisfying the following condition:
\begin{itemize}
\item[(a)] Fix two relatively compact (\lq\lq completely interior\rq\rq\ in the terminology of \cite{Ca}) subdomains $\Delta,\Delta' \Subset D$. Then for every $\epsilon>0$ there exists $\epsilon'>0$ such that, for all $\psi_1,\psi_2\in G$, $\sup\{\delta(\psi_1(x),\psi_2(x)): x\in \Delta\}<\epsilon$ whenever $\sup\{\delta(\psi_1(x), \psi_2(x)): x\in \Delta'\}<\epsilon'$.
\end{itemize}
In this case, we consider in $G$ the topology of uniform convergence in $\overline \Delta_0$ for an arbitrary, fixed subdomain $\Delta_0\Subset D$, and for any $\psi_1,\psi_2\in G$, $\Delta\Subset D$ we define $\delta_{\Delta}(\psi_1,\psi_2) := \sup\{\delta(\psi_1(x),\psi_2(x)) : x\in \Delta\}$, $\delta_\Delta(\psi_1) := \delta_\Delta(\psi_1,Id)$. The set $G$ is then called a \emph{transformation group} on $D$, seen \emph{from the local point of view}, if, furthermore, $Id\in G$ and the following conditions are fulfilled:
\begin{itemize}
\item[(b)] There is a neighborhood $G'$ of $Id$ in $G$ and a law that associates to any pair $(\psi_2,\psi_1)\in G'^2$ an element $\phi = \phi(\psi_2,\psi_1)\in G$, satisfying the following properties. For every domain $\Delta\Subset D$ there exists $\eta(\Delta)>0$ such that, for all $\psi_1\in G$ for which $\delta_\Delta(\psi_1)<\eta(\Delta)$ we have
					\begin{itemize}
						\item[(1)] $\psi_1\in G'$;
						\item[(2)] $\psi_1(\Delta)\subset D$;
						\item[(3)] for all $x\in \Delta$ and all $\psi_2\in G'$, $\phi(x) = \psi_2(\psi_1(x))$.
					\end {itemize}
We write $\phi(\psi_2,\psi_1) = \psi_2\circ \psi_1$.
\item[(c)] There is a neighborhood $G''\subset G'$ of $Id$ in $G$ such that, for all $\psi_1\in G''$, there exists $\psi_2\in G'$ such that $\psi_2\circ \psi_1 = Id$. We write $\psi_2 = \psi_1^{-1}$. 
\end{itemize}

Suppose that $D\subset \mathbb C^n$ and let $G$ be a subset of ${\rm Hol}(D)$ which satisfies the conditions (b) and (c) above and, moreover, is uniformly bounded over $D$. Then by Montel's theorem $G$ also satisfies (a) and it is thus a local group of transformations according to the previous definition (a local group of \emph{pseudo-conformal} transformations in the language of Cartan).

Let then $G$ be a (local) group of analytic transformations on a domain $D$. We say that $G$ satisfies the \emph{property} [\emph{P}] if 
\begin{itemize}
\item[{[P]}] for every sequence $\{S_i\}_{i\in \mathbb N}\subset G$ such that $S_i\to Id$ as $i\to \infty$, there exists a subsequence $\{S_{k_i}\}$ and positive integers $m_i$ such that $m_i(S_{k_i} - Id)$ converges uniformly on compact subsets of $D$ to a non-identically-vanishing, analytic map $\psi$.
\end{itemize}
The importance of the previous condition lies in the fact that it allows to construct one-parameter groups of analytic transformations contained in $G$. In fact, the map $\psi$ turns out to be to be an infinitesimal generator, in the sense that its components can be interpreted as the components of a vector field whose flow, computed for small real parameters, is contained in (the closure of) $G$:
\begin{theorem}[{\cite[Th\'eor\`eme 3]{Ca}}]\label{Th3}
Let $G$ be a locally closed group of analytic transformations. If there exist sequences $\{S_i\}_{i\in \mathbb N}\subset G$, $S_i\to Id$, and  $\{m_i\}_{i\in \mathbb N}\subset \mathbb N$ such that $m_i(S_i - Id)\to \psi$ uniformly (where $\psi$ is a  non-identically-vanishing analytic map) then the group $G$ admits the infinitesimal transformation $\varphi(\xi,t)$ defined by
\begin{equation} \label{infitra}
 \frac{\partial \varphi}{\partial t}(\xi,t) = \psi(\varphi(\xi,t)),
 \end{equation}
 i.e.\ $\varphi(\cdot,t)\in G$ for $t$ in a neighborhood of $0$ in $\mathbb R$.
\end{theorem}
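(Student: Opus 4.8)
The plan is to realize the flow of $\psi$ as a limit of iterates of the $S_i$ and then invoke local closedness. First observe that, since $S_i\to Id$ while $m_i(S_i-Id)\to\psi\not\equiv 0$, the multipliers must satisfy $m_i\to\infty$. Writing $\psi_i:=m_i(S_i-Id)$, so that $S_i=Id+\tfrac{1}{m_i}\psi_i$ with $\psi_i\to\psi$ uniformly on compact subsets of $D$, the assignment $\xi\mapsto S_i(\xi)$ is precisely one step, of size $h=1/m_i$, of the explicit Euler scheme for the autonomous system $\dot x=\psi(x)$. The idea is therefore to prove that, for fixed small $t\geq 0$, the iterate $S_i^{\circ \lfloor m_i t\rfloor}$ converges uniformly on compacts to the time-$t$ flow $\varphi(\cdot,t)$ of $\psi$; since each $S_i^{\circ \lfloor m_i t\rfloor}$ lies in $G$ and $G$ is locally closed, the limit $\varphi(\cdot,t)$ will then belong to $G$ as well.

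Before the estimate I would record the analytic preliminaries. As $\psi$ is analytic, hence locally Lipschitz, the flow $\varphi(\cdot,s)$ exists, is analytic in $(\xi,s)$, and is a local diffeomorphism for $|s|$ small; I then fix a subdomain $\Delta_0\Subset D$ and a time $t$ small enough that the trajectories $s\mapsto\varphi(\xi,s)$ with $\xi\in\Delta_0$, $|s|\leq t$, remain inside some fixed compact set $\Delta_1\Subset D$. On $\Delta_1$ let $L$ be a Lipschitz constant for $\psi$ and put $\epsilon_i:=\sup_{\Delta_1}|\psi_i-\psi|\to 0$.

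The heart of the argument is a discrete Gr\"onwall estimate. Set $s_j:=j/m_i$ and $e_j:=\sup_{\xi\in\Delta_0}|S_i^{\circ j}(\xi)-\varphi(\xi,s_j)|$, where one checks inductively that, for $j\leq\lfloor m_i t\rfloor$, the iterates are well defined, belong to $G$, and keep their images in $\Delta_1$. Comparing one Euler step with a Taylor expansion of the flow, and splitting $\psi_i(S_i^{\circ j})-\psi(\varphi(\cdot,s_j))$ into $(\psi_i-\psi)(S_i^{\circ j})$ plus a term controlled by $L\,e_j$, one obtains, for a constant $C$ depending only on $\psi$ and $\Delta_1$,
\[
e_{j+1}\leq\Bigl(1+\tfrac{L}{m_i}\Bigr)e_j+\tfrac{1}{m_i}\Bigl(\epsilon_i+\tfrac{C}{m_i}\Bigr).
\]
Since $e_0=0$, iterating this recursion gives $e_{\lfloor m_i t\rfloor}\leq\tfrac{1}{L}(\epsilon_i+C/m_i)(e^{Lt}-1)\to 0$. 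As $s_{\lfloor m_i t\rfloor}\to t$ and $\varphi$ is continuous in time, this yields $S_i^{\circ \lfloor m_i t\rfloor}\to\varphi(\cdot,t)$ uniformly on $\Delta_0$.

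To conclude, $S_i^{\circ \lfloor m_i t\rfloor}\in G$ and the local closedness of $G$ force $\varphi(\cdot,t)\in G$ for every small $t\geq 0$. For $t<0$ I would rerun the argument with $S_i^{-1}$ in place of $S_i$: condition (c) gives $S_i^{-1}\in G$, and $m_i(S_i^{-1}-Id)\to-\psi$, so the same reasoning produces $\varphi(\cdot,t)$ for small $t\leq 0$. Hence $\varphi(\cdot,t)\in G$ throughout a neighborhood of $0$, which is the assertion. I expect the main obstacle to be the domain bookkeeping hidden in the inductive step: one must guarantee, uniformly in $i$ and in $j\leq\lfloor m_i t\rfloor$, that the long iterates $S_i^{\circ j}$ remain genuine elements of $G$ \emph{represented by the pointwise $j$-fold composition} on the \emph{common} subdomain $\Delta_0$, so that the uniform bounds and the single Lipschitz constant $L$ apply at every step. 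This is exactly what the local group axioms (b) and (c), combined with $S_i\to Id$ and the a priori confinement of all iterates to the fixed compact $\Delta_1\Subset D$, are designed to supply, and it is the delicate point where the purely local nature of the composition law must be carefully reconciled with the arbitrarily long products $S_i^{\circ\lfloor m_i t\rfloor}$.
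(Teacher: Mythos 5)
Your proposal is correct and follows essentially the same route as the source: the paper does not reprove this statement but cites Cartan's Th\'eor\`eme 3, whose proof is exactly this argument --- realizing the flow as the locally uniform limit of the iterates $S_i^{\circ\lfloor m_i t\rfloor}$ (an Euler scheme with step $1/m_i$, controlled by a Gr\"onwall-type estimate), handling $t<0$ via the inverses supplied by axiom (c), and concluding by local closedness, with the local-group axioms (b) and (c) doing precisely the domain bookkeeping you flag. Your inductive coupling of the error bound $e_j$ with membership of $S_i^{\circ j}$ in the neighborhood $G'$ (valid for $t$ small enough that $\sup_{|s|\le t}\delta_\Delta(\varphi(\cdot,s))$ plus the error stays below $\eta(\Delta)$) is the right way to close the delicate point, so no gap remains.
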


\begin{remark}
In the previous statement, the assumption that $G$ is locally closed regards $G$ as a subset of ${\rm Diff}^\omega(D)$, with the topology of uniform convergence on compact subsets of $D$. In Cartan's paper, the definition of \lq\lq locally closed\rq\rq\ actually corresponds to the current notion of local compactness. An inspection of the proof of Theorem \ref{Th3}, however, shows that the relevant property (only for the purpose of proving that result) is local closeness. In any case, our aim is to apply this result in the context of local groups of \emph{holomorphic} transformations, in which, by Montel's theorem, locally closed implies locally compact.
\end{remark}

\begin{remark}
The precise meaning of (\ref{infitra}) is the following. $G$ is a group of transformations on a domain $D\subset \mathbb R^n$ or $\mathbb C^n$, and the variable $\xi=(\xi_1,\ldots, \xi_n)$ represents a $n$-uple of real or complex coordinates on $D$. We think of the map $\psi = (\psi_1,\ldots, \psi_n)$, whose components are real-analytic or holomorphic in $\xi$, as the vector-field $\Psi = \sum_j \psi_jÊ\partial/\partial \xi_j$. Then (\ref{infitra}) represents the system $\partial \varphi_j(\xi,t)/\partial t = \psi_j(\varphi(\xi,t))$ ($j=1,\ldots, n$) with initial condition $\varphi(\xi,0)=\xi$, where $\varphi = (\varphi_1,\ldots,\varphi_n)$ and $t$ is a \emph{real} variable. For every $K\Subset D$, there exists $\delta>0$ such that the (unique) solution $\varphi$ of this system is defined for $\xi\in K$ and $-\delta<t<\delta$; $\varphi(\xi,t)$ is real-analytic or holomorphic with respect to $\xi$ and real-analytic in $t$. 

In the holomorphic case (where $D\subset \mathbb C^n$, $\xi_j = x_j + i y_j$), breaking down (\ref{infitra}) in its real and imaginary part one can check that $\varphi$ is the flow of the vector field ${\rm Re} \Psi = \sum_j {\rm Re} \psi_j \partial/\partial x_j + {\rm Im} \psi_j \partial/\partial y_j$ (note that $\Psi$ itself is a section of $\mathbb C \otimes T(D)$ rather than $T(D)$). In this case, by classical results we can also integrate the complex version of the system, i.e. we can locally find a holomorphic solution $\Phi$ to the system $\partial \Phi(\xi,\tau)/\partial \tau = \psi(\Phi(\xi,\tau))$, where $\tau = t + is$ is now a complex variable. This produces the same outcome, that is $\Phi(\xi,t) = \varphi(\xi,t)$ for $t\in \mathbb R$; indeed, since by the Cauchy-Riemann conditions $\partial \Phi/ \partial t = -i \partial \Phi/\partial s$, we have $\partial \Phi/ \partial \tau = \frac{1}{2} (\partial \Phi/ \partial t - i\partial \Phi/ \partial s) = \partial \Phi/ \partial t$ and thus $\Phi(\xi,t)$ is, too, a solution to (\ref{infitra}). 
\end{remark}
The second ingredient that we recall from \cite{Ca} is the following result:

\begin{theorem}[{\cite[Th\'eor\`eme 10]{Ca}}]\label{Th10}
All the groups of holomorphic transformations seen from the local point of view satisfy property [P].
\end{theorem}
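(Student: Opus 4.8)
The plan is to verify property [P] directly: given an arbitrary sequence $\{S_i\}\subset G$ with $S_i\to Id$, I must exhibit integers $m_i$ and a limit map $\psi\not\equiv 0$. Since by axiom (a) the topology on $G$ is independent of the chosen reference subdomain, the hypothesis $S_i\to Id$ in fact means $S_i\to Id$ uniformly on every $\Delta\Subset D$. Fix $\Delta_0\Subset D$, discard the terms with $S_i=Id$ (which vanish identically on $D$ by the identity principle), and set $\rho_i:=\delta_{\Delta_0}(S_i)\to 0$, $m_i:=\lfloor 1/\rho_i\rfloor$, $T_i:=m_i(S_i-Id)$, so that $\delta_{\Delta_0}(T_i)=m_i\rho_i\to 1$. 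The clarifying first observation is that [P] is then equivalent to a \emph{comparability estimate}: if $\{T_i\}$ is uniformly bounded on every compact $\Delta\Subset D$, then being a family of holomorphic maps it has, by Montel, a subsequence converging uniformly on compacts to a holomorphic $\psi$, and $\delta_{\Delta_0}(\psi)=1$ forces $\psi\not\equiv 0$; conversely such a limit forces the bound. Thus it suffices to prove
\[
\delta_{\Delta}(S_i-Id)\le C_\Delta\,\delta_{\Delta_0}(S_i-Id)\qquad (\Delta\Subset D),
\]
possibly along a subsequence: the size of $S_i-Id$ on a large subdomain must be controlled by its size on $\Delta_0$.

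To establish this comparability I would exploit the group structure through the orbit of iterates. Fixing a small $\kappa>0$ and $\Delta_0\Subset\Delta_1\Subset D$ so that $\delta_{\Delta_1}(\phi)\le\kappa$ forces $\phi(\Delta_0)\subset\Delta_1$, axioms (b)--(c) show that the iterates $S_i^{\circ k}$ are legitimate elements of $G$ for $0\le k\le m_i$ (a composition of sufficiently-near-identity maps stays in $G$), hence are uniformly bounded by the defining bound of $G$. By axiom (a) --- equivalently, by the Hadamard three-domains inequality applied to the uniformly bounded holomorphic differences --- the smallness of these iterates on $\Delta_0$ propagates to every compact, confining the whole orbit to a fixed compact $K\subset D$ on which $S_i-Id$ and $D(S_i-Id)$ tend to $0$. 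The exact telescoping identity
\[
S_i^{\circ m_i}-Id=\sum_{k=0}^{m_i-1}(S_i-Id)\circ S_i^{\circ k}
\]
then writes the \emph{bounded} group element $S_i^{\circ m_i}-Id$ as a sum of $m_i$ terms, each close to the single increment $S_i-Id$; this is the mechanism linking $T_i=m_i(S_i-Id)$ to a genuine, uniformly bounded element of $G$.

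The heart of the matter, and the step I expect to be the \textbf{main obstacle}, is the two-sided control of this telescoped sum. The lower bound that prevents degeneration of the limit is built into the normalization, since $\delta_{\Delta_0}(T_i)\to 1$. The upper bound --- the comparability estimate itself --- is the delicate point: for an \emph{arbitrary} small holomorphic perturbation the size on $\Delta$ is not comparable to the size on $\Delta_0$ (the three-domains inequality yields only a H\"older bound, which is too weak once multiplied by $m_i\approx 1/\rho_i$), so the possible cancellation among the summands must be excluded using the rigidity of the local group rather than holomorphy alone. Concretely, the fact that no partial orbit $S_i^{\circ k}$ ($k\le m_i$) may leave the bounded range of $G$ has to be leveraged to force the increments $S_i-Id$ to accumulate essentially linearly, and hence to have comparable size throughout $D$. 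Once this is in hand, the reduction of the first paragraph produces a subsequence with $m_i(S_{k_i}-Id)\to\psi$ uniformly on compacts, with $\psi$ holomorphic and not identically zero, which is exactly property [P]; the resulting $\psi$ is the infinitesimal generator then fed into Theorem \ref{Th3}.
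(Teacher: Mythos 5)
First, a point of orientation: the paper offers no proof of Theorem \ref{Th10} at all --- it is imported verbatim from Cartan \cite{Ca} (Th\'eor\`eme 10), so the benchmark is Cartan's original argument, whose overall shape (normalize $S_i-Id$, pass to iterates, telescope, extract a limit by Montel) your outline does reproduce. Your opening reduction is also sound as far as it goes: if $T_i=m_i(S_i-Id)$ is uniformly bounded on compacts, Montel together with the normalization $\delta_{\Delta_0}(T_i)\to 1$ yields a non-identically-vanishing holomorphic limit along a subsequence, i.e.\ property [P].

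However, the proposal has a genuine, self-acknowledged gap: the comparability estimate $\delta_\Delta(S_i-Id)\le C_\Delta\,\delta_{\Delta_0}(S_i-Id)$, which you correctly identify as the heart of the matter, is never proved --- \lq\lq has to be leveraged\rq\rq\ and \lq\lq once this is in hand\rq\rq\ are placeholders exactly where the work lies, and boundedness of the orbit $\{S_i^{\circ k}\}_{k\le m_i}$ in the range of $G$ does not by itself exclude cancellation among the summands. What actually closes the argument (and is, in substance, Cartan's mechanism) is the following: define $m_i$ as an \emph{exit time} --- the largest $m$ with $\delta_{\Delta_0}(S_i^{\circ k}-Id)\le\eta$ for all $k\le m$ --- rather than a priori as $\lfloor 1/\rho_i\rfloor$; use axiom (a) to convert this into uniform smallness $\delta_\Delta(S_i^{\circ k}-Id)\le \eta'(\eta)$ on every compact; use Cauchy estimates to bound $\|D(S_i-Id)\|$ on a compact \emph{linearly} by $\delta_{\Delta'}(S_i)$ on a slightly larger one, so that each term of your telescoping identity satisfies $(S_i-Id)\circ S_i^{\circ k}=(S_i-Id)+O(\eta'\,\delta_{\Delta'}(S_i))$. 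Summing gives $S_i^{\circ m_i}-Id=m_i(S_i-Id)+O(\eta'\,m_i\,\delta_{\Delta'}(S_i))$, and for $\eta$ small this estimate is self-improving: it yields the two-sided bound $m_i\,\delta_\Delta(S_i)\asymp\eta$ on all compacts simultaneously, which is precisely your missing comparability (with the identity principle ruling out a limit of iterates equal to $Id$ near $\Delta_0$ but not globally). Without this closure your sketch is circular: with $m_i=\lfloor 1/\rho_i\rfloor$ fixed in advance, merely keeping the orbit inside a fixed compact already requires controlling $m_i\,\delta_{\Delta'}(S_i)$ on larger domains, i.e.\ the very estimate being sought. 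Two smaller inaccuracies: axiom (a) is not \lq\lq equivalent\rq\rq\ to the Hadamard three-domains inequality (as you note yourself, three-domains gives only a H\"older bound; in this setting (a) follows from uniform boundedness via Montel), and membership $S_i^{\circ k}\in G$ for all $k\le m_i$ is not a one-shot consequence of (b)--(c) but requires an induction interleaved with the orbit estimates, since (b) only licenses composition while the partial iterates remain in the near-identity neighborhood $G'$.
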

Putting together Theorems \ref{Th3} and \ref{Th10} we deduce immediately
\begin{cor}\label{puttog}
Let $G$ be a locally closed group of holomorphic transformations, seen from the local point of view, such that $Id$ is not isolated. Then $G$ admits a non-trivial infinitesimal transformation.
\end{cor}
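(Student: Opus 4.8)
The plan is to obtain Corollary \ref{puttog} as a direct combination of Theorem \ref{Th10} and Theorem \ref{Th3}; these two results package almost the entire statement, and the only genuine work is to extract from the hypothesis that $Id$ is not isolated a suitable sequence to which property [P] applies, and then to feed the output of [P] into Theorem \ref{Th3}.

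First I would unwind the meaning of ``$Id$ is not isolated'' in the topology on $G$ (uniform convergence on $\overline\Delta_0$ for a fixed $\Delta_0\Subset D$). This says precisely that every neighborhood of $Id$ in $G$ meets $G\setminus\{Id\}$, so there exists a sequence $\{S_i\}_{i\in\mathbb N}\subset G$ with $S_i\neq Id$ for all $i$ and $S_i\to Id$. It is essential to record that the $S_i$ are genuinely different from the identity: otherwise $S_i-Id\equiv 0$, and the non-vanishing conclusion in property [P] could not possibly hold.

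Next, since $G$ is a group of holomorphic transformations seen from the local point of view, Theorem \ref{Th10} guarantees that $G$ satisfies property [P]. Applying [P] to the sequence $\{S_i\}$ just constructed yields a subsequence $\{S_{k_i}\}$ and positive integers $m_i$ such that $m_i(S_{k_i}-Id)$ converges, uniformly on compact subsets of $D$, to some non-identically-vanishing analytic map $\psi$.

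Finally I would invoke Theorem \ref{Th3}. The subsequence $\{S_{k_i}\}$ inherits convergence to $Id$ from $\{S_i\}$, and together with the integers $m_i$ and the limit $\psi$ it satisfies exactly the hypotheses of that theorem; here one uses that $G$ is assumed locally closed. Theorem \ref{Th3} then produces the infinitesimal transformation $\varphi(\xi,t)$ solving $\partial\varphi/\partial t=\psi(\varphi)$, with $\varphi(\cdot,t)\in G$ for $t$ in a neighborhood of $0$ in $\mathbb R$; since $\psi\not\equiv 0$, this infinitesimal transformation is non-trivial, which is the desired conclusion. I do not expect any real obstacle beyond bookkeeping: the corollary is designed to be a formal consequence of the two cited theorems, and the single point requiring care is ensuring the approximating sequence avoids the identity, so that property [P] is genuinely applicable.
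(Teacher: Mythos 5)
Your proposal is correct and is exactly the argument the paper intends: the paper deduces Corollary \ref{puttog} ``immediately'' by combining Theorems \ref{Th3} and \ref{Th10}, and your write-up simply makes explicit the two routine steps (extracting a sequence $S_i\to Id$ with $S_i\neq Id$ from non-isolation, then feeding the output of property [P] into Theorem \ref{Th3} using local closedness). Your observation that the $S_i$ must be chosen different from $Id$ for the non-vanishing conclusion of [P] to be meaningful is a sensible point of care, and nothing further is needed.
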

Corollary \ref{puttog} will be used in Section \ref{proofpro} for the proof of Proposition \ref{Ghom}.

\section{Homogeneous curves}\label{homo}

\subsection{Uniformity of the domain}

Let $M\subset\mathbb R^n$ be a ($\mathcal G$-)homogeneous submanifold; for every $p,q\in M$ there is a real-analytic diffeomorphism $\psi$, defined on a neighborhood $U$ of $p$ in $\mathbb R^n$, such that $\psi(p)=q$ and $\psi(M\cap U) = M\cap \psi(U)$. However, the domain $U$ on which $\psi$ is defined depends on $p$ and $q$. Our first step is to show that suitably selected families of transformations are defined on a fixed domain. We will prove this fact in Lemma \ref{Baire} using  Baire's theorem; however we first need a (probably well-known) lemma about the existence of the inverse of a map which is the uniform limit of holomorphic diffeomorphisms.

\begin{lemma}\label{limitinv}
Let $U\subset \mathbb C^n$ be a relatively compact domain, and let $\{\psi_n\}_{n\in \mathbb N}$ be a sequence of holomorphic mappings $\psi_n: U\to \mathbb C^n$ such that 
\begin{itemize}
\item each $\psi_n$ admits a holomorphic inverse $\psi_n^{-1}:\psi_n(U)\to U$;
\item there exists $\epsilon>0$ such that $|J\psi_n(z)|\geq \epsilon$ for all $n\in \mathbb N$, $z\in U$;
\item $\{\psi_n\}$ converges, uniformly over $U$, to a mapping $\psi:U\to \mathbb C^n$.
\end{itemize}
Then $\psi$ admits a holomorphic inverse $\psi^{-1}:\psi(U)\to U$.
\end{lemma}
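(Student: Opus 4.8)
The plan is to first transfer the non-degeneracy of the $\psi_n$ to the limit, and then to reduce the whole statement to the global injectivity of $\psi$, which is the crux.

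First I would observe that, being a uniform limit of holomorphic maps on $U$, the map $\psi$ is itself holomorphic. By Cauchy's estimates the convergence $\psi_n\to\psi$ forces $D\psi_n\to D\psi$, and hence $J\psi_n\to J\psi$, uniformly on compact subsets of $U$; combined with the hypothesis $|J\psi_n|\geq\epsilon$ this yields $|J\psi(z)|\geq\epsilon$ for every $z\in U$. In particular $\psi$ is everywhere a local biholomorphism, and therefore an open map. Granting for a moment that $\psi$ is injective, it is then a homeomorphism onto the open set $\psi(U)$ whose local inverses are holomorphic by the inverse function theorem; these patch together to a global holomorphic inverse $\psi^{-1}\colon\psi(U)\to U$, which is exactly the assertion.

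It remains to prove injectivity, and this is where I expect the real work to lie, since it is the several-variable analogue of Hurwitz's theorem: a locally uniform limit of injective holomorphic maps is injective or has identically degenerate Jacobian, the latter being precisely what the bound $|J\psi|\geq\epsilon$ excludes. Suppose, for contradiction, that $\psi(a)=\psi(b)=:w_0$ with $a\neq b$. Since $\psi$ is locally biholomorphic, I can pick disjoint balls $B_a\ni a$, $B_b\ni b$, with $\overline{B_a},\overline{B_b}\Subset U$, on whose closures $\psi$ is injective; then $a$ (resp.\ $b$) is the only preimage of $w_0$ in $\overline{B_a}$ (resp.\ $\overline{B_b}$), and $d:=\min\{|\psi(z)-w_0|\colon z\in\partial B_a\cup\partial B_b\}>0$. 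For $n$ large enough that $\sup_U|\psi_n-\psi|<d/2$, the straight-line homotopy $H_t=(1-t)\psi+t\psi_n$ satisfies $|H_t-w_0|\geq d/2$ on $\partial B_a$ and on $\partial B_b$ for all $t\in[0,1]$. Homotopy invariance of the Brouwer degree then gives $\deg(\psi_n,B_a,w_0)=\deg(\psi,B_a,w_0)=1$, the last equality because $J\psi(a)\neq 0$ makes $a$ a simple preimage; an identical computation holds on $B_b$. As the local multiplicities at preimages of a holomorphic map are strictly positive, a degree equal to $1$ produces an actual solution $z_n^a\in B_a$ of $\psi_n(z)=w_0$, and likewise $z_n^b\in B_b$. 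Since $B_a$ and $B_b$ are disjoint, $z_n^a\neq z_n^b$ are two distinct preimages of $w_0$ under $\psi_n$, contradicting the injectivity of $\psi_n$. Hence $\psi$ is injective and the lemma follows.

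The main obstacle is thus the injectivity step: one must rule out that distinct points are identified in the limit. The degree-theoretic argument above handles this uniformly in all dimensions, but two points are essential. First, the Jacobian bound must survive to the limit, so that $\psi$ is a genuine local biholomorphism with simple fibers and the degenerate alternative in Hurwitz's theorem (namely $\det D\psi\equiv 0$) cannot occur; this is exactly the role played by the hypothesis $|J\psi_n|\geq\epsilon$. Second, one must invoke the positivity of the local intersection multiplicities of holomorphic maps to pass from $\deg=1$ to the \emph{existence} of a preimage, since a purely topological degree count would otherwise permit cancellations of opposite signs.
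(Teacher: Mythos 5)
Your proof is correct, but it takes a genuinely different route from the paper's. You reduce everything to injectivity of the limit map and establish it directly by a degree-theoretic Hurwitz argument: two disjoint balls around putative preimages of a common value, the straight-line homotopy keeping $w_0$ off the boundary images, $\deg(\psi,B_a,w_0)=\deg(\psi,B_b,w_0)=1$ since holomorphic maps with non-vanishing Jacobian are orientation-preserving, and persistence of solutions for $\psi_n$ contradicting injectivity; the inverse is then assembled by patching local inverses. The paper instead constructs the inverse as a limit of the inverses: it first shows (via $\phi_n=\psi^{-1}\circ\psi_n\to Id$ and the several-variables Rouch\'e/Hurwitz theorem) that $\psi_n(U)$ eventually contains any compact subset of $\psi(U)$, then uses Montel's theorem with a diagonal argument over an exhaustion of $\psi(U)$ to extract $\psi_n^{-1}\to\phi$ locally uniformly, and finally verifies $\phi\circ\psi=Id$ pointwise, so injectivity of $\psi$ falls out as a byproduct rather than being proved head-on. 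Both proofs use the same two pillars (the Jacobian bound passing to the limit via Cauchy estimates, and a Rouch\'e/Hurwitz-type persistence statement), but yours is more self-contained and avoids normal families altogether, while the paper's yields the slightly stronger by-product that $\psi^{-1}$ is a locally uniform limit of the $\psi_n^{-1}$. One small inaccuracy in your closing commentary: the appeal to positivity of local multiplicities is not needed to pass from $\deg=1$ to existence of a preimage --- for any continuous map, nonzero Brouwer degree already guarantees solvability; positivity of multiplicities would only matter if you wanted to count preimages, and the feared \lq\lq cancellations\rq\rq\ would have made the degree zero, which you have already excluded by computing it to be $1$.
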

\begin{proof}
Note that by the Cauchy estimates follows that $\psi_n\to \psi$ uniformly with all the derivatives on compact subsets of $U$. Thus, by continuity we have that $|J\psi(z)|\geq \epsilon$ for all $z\in U$, hence $\psi$ is a local diffeomorphism. In particular $\psi(U)$ is an open domain of $\mathbb C^n$. Let $K\subset \psi(U)$ be any compact subset; we claim that there exist a domain $V\supset K$, $V\Subset \psi(U)$, and $n_0\in \mathbb N$ such that $\psi_n(U)\supset V$ for all $n\geq n_0$. 

Let $q\in K$; by compactness, to verify the claim it is sufficient to show that for a small enough $r>0$ there exists $n_0\in \mathbb N$ such that $B_r(q)\subset \psi_n(U)$ for all $n\geq n_0$. Choose $p\in U$ such that $\psi(p)=q$; since $\psi$ is a local diffeomorphisms, there exists $U_1\subset U$, $p\in U_1$ such that $\psi|_{U_1}$ has a local inverse $\psi^{-1}: \psi(U_1)\to U_1$. If now we choose $r_1>0$ such that $B_{r_1}(p) \subset U_1$, for $n$ large enough we have $\psi_n(B_{r_1}(p))\subset \psi(U_1)$, hence we can define a mapping $\phi_n: B_{r_1}(p)\to \mathbb C^n$ as $\phi_n = \psi^{-1}\circ \psi_n$. By assumption we have $\phi_n\to Id$ uniformly as $n\to \infty$, thus it follows (for example by applying the several-variables version of Rouch\'e's theorem, see \cite{Ll}, or Hurwitz's theorem) that $\phi_n(B_{r_1}(p))$ contains a fixed neighborhood $U_2$ of $p$ for all $n\geq n_0$ for a certain $n_0\in \mathbb N$. We deduce that $\psi_n(B_{r_1}(p))\supset \psi(U_2)$ for $n\geq n_0$, which implies the claim.

Choose an exhaustion $\{K_j\}_{j\in \mathbb N}$ of $\psi(U)$ by compact subsets, and let $\{V_j\}_{j\in \mathbb N}$, $K_j\subset V_j$, be the associated domains as found above; moreover, define $U_j = \psi^{-1}(V_j)\subset U$. Note that, since each mapping $\psi_k^{-1}$ is valued in $U$, for any $k_0\in \mathbb N$ the family $\{\psi_k^{-1}\}_{k\geq k_0}$ is uniformly bounded on any common domain of definition $D$ (i.e.\ any domain $D\subset \bigcap_{k\geq k_0} \psi_k(U)$).  
 Then, using Montel's theorem and a suitable diagonal argument, we can select a subsequence of $\{\psi_{n}^{-1}\}$, which we still denote in the same way, such that $\psi_n^{-1}$ is eventually defined on any $V_j$ and $\psi_n^{-1}\to \phi$ as $n\to\infty$ uniformly on compact subsets of $\psi(U)$, where $\phi: \psi(U)\to U$ is a holomorphic mapping.
 
Let $p\in U$, and pick $j\in \mathbb N$ such that $p\in U_j$. Since the mappings $\psi_n$ converge to $\psi$, we also have $\psi_n(p)\in V_j$ for all large enough $n$. Then from the fact that $\psi_n^{-1}\to \phi$ uniformly on $V_j$ follows that $\psi_n^{-1} \circ \psi_n(p) \to \phi\circ \psi(p)$ as $n\to \infty$, implying $\phi\circ \psi(p) = p$. Hence the mapping $\psi$ is injective and $\phi=\psi^{-1}$ is its holomorphic inverse.
\end{proof}

\begin{lemma}\label{Baire}
Let $M\subset \mathbb R^n$ be a homogeneous submanifold. Then there exist $p_0\in M$, a neighborhood $U$ of $p_0$ in $\mathbb C^n$ and a family $\{\phi_q\}\subset {\rm Hol}_*(U)$ ($q\in M\cap U$) of holomorphic mappings with the following properties:
\begin{itemize}
\item the restriction of $\phi_q$ to $\mathbb R^n$ is an element of ${\rm Diff}_*^\omega(U\cap \mathbb R^n)$;
\item $\phi_q(M\cap U)\subset M$ for all $q\in M\cap U$;
\item $\phi_q(p_0)=q$ for all $q\in M\cap U$, and $\phi_{p_0}\equiv Id$;
\item the maps $\phi_q$ are uniformly bounded, and $|J\phi_q|$ bounded below by a positive constant, as $q\in M\cap U$.
\end{itemize} 
If, moreover, $M$ is $\mathcal G$-homogeneous for a closed subgroup $\mathcal G \leq {\rm Diff}^\omega(\mathbb R^n,0)$, the family $\{\phi_q\}$ can be chosen in such a way that, for all $q\in M\cap U$, the germ induced at $0$ from the map $\tau_{-q} \circ (\phi_q|_{\mathbb R^n}) \circ \tau_{p_0}$ belongs to $\mathcal G$. Here we denote by $\tau_p: \mathbb R^n\to \mathbb R^n$ the translation $\tau_p(x) = x + p$.
\end{lemma}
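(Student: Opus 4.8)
The plan is to run a Baire category argument to produce a common complex domain carrying a whole family of uniformly controlled $M$-preserving equivalences, and then to move the base point to the right place by a single composition. Throughout I work with holomorphic representatives: by the remark at the end of Section \ref{locequivpro}, every germ in ${\rm Diff}^\omega(\mathbb R^n,0)$ extends to a holomorphic germ, so each real-analytic local equivalence of $M$ furnished by homogeneity extends to a holomorphic map on a complex neighborhood, with nonvanishing Jacobian and bounded near the relevant point.

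Fix a reference point $r_0\in M$. For each integer $N$ let $C_N$ be the set of $q\in M$ admitting a holomorphic $\psi:B_{1/N}(r_0)\to\mathbb C^n$ (the ball taken in $\mathbb C^n$) with $\psi(r_0)=q$, $\psi$ invertible, $\psi(M\cap B_{1/N}(r_0))\subset M$, $\sup_{B_{1/N}(r_0)}|\psi|\le N$ and $\inf|J\psi|\ge 1/N$ (and, in the $\mathcal G$-homogeneous case, with the germ of $\tau_{-q}\circ\psi\circ\tau_{r_0}$ at $0$ lying in $\mathcal G$). The decisive point is that \emph{all} maps counted in $C_N$ live on the same ball $B_{1/N}(r_0)$, so no uniformity of domain has to be arranged by hand. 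Homogeneity together with the holomorphic extension shows that every $q$ in a small neighborhood of $r_0$ in $M$ lies in some $C_N$, so the $C_N$ cover $M$ near $r_0$. I would then check that each $C_N$ is relatively closed in $M$: given $q_j\to q$ with witnesses $\psi_j$, Montel's theorem yields a subsequence converging locally uniformly on $B_{1/N}(r_0)$ to a holomorphic $\psi$; the bound $|J\psi|\ge 1/N$ passes to the limit by the Cauchy estimates, invertibility of $\psi$ follows from Lemma \ref{limitinv}, the inclusion $\psi(M\cap\cdots)\subset M$ survives because $M$ is locally closed, and $\psi(r_0)=q$. In the $\mathcal G$-case the germ condition is inherited by the limit precisely because $\mathcal G$ is closed. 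Since $M$ is a manifold, hence a Baire space, some $C_{N_0}$ has nonempty relative interior; I fix a point $p_0$ therein and a relative neighborhood $W$ of $p_0$ with $W\subset C_{N_0}$.

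Now I relocate the base point by one composition. For $q\in W$ let $\psi_q:B_{1/N_0}(r_0)\to\mathbb C^n$ be a witness, so $\psi_q(r_0)=q$, and in particular $\psi_{p_0}(r_0)=p_0$. Put $\phi_q:=\psi_q\circ\psi_{p_0}^{-1}$. Since $\psi_{p_0}^{-1}(p_0)=r_0$ lies in the interior of $B_{1/N_0}(r_0)$, there is a fixed neighborhood $U$ of $p_0$, independent of $q$, with $\psi_{p_0}^{-1}(U)\subset B_{1/N_0}(r_0)$; shrinking $U$ so that $M\cap U\subset W$, every $\phi_q$ is holomorphic on the common domain $U$. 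The stated properties follow directly: $\phi_q(p_0)=\psi_q(r_0)=q$ and $\phi_{p_0}=\mathrm{Id}$; $\phi_q$ preserves $M$ since both factors do; $|\phi_q|\le N_0$ on $U$ because $\psi_{p_0}^{-1}(U)$ lies in the ball where $|\psi_q|\le N_0$; and $|J\phi_q|=|J\psi_q|\,|J\psi_{p_0}^{-1}|\ge (1/N_0)\,m$, where $m>0$ is the minimum of the fixed Jacobian $|J\psi_{p_0}^{-1}|$ on $\overline U$. The restriction to $\mathbb R^n$ is a real-analytic diffeomorphism because the original equivalences are real. In the $\mathcal G$-case, the factorization $\tau_{-q}\circ\phi_q\circ\tau_{p_0}=\bigl(\tau_{-q}\circ\psi_q\circ\tau_{r_0}\bigr)\circ\bigl(\tau_{-r_0}\circ\psi_{p_0}^{-1}\circ\tau_{p_0}\bigr)$ exhibits the germ at $0$ as the product of the germ of $\psi_q$ (in $\mathcal G$ by construction of $C_{N_0}$) with the inverse of the germ $\tau_{-p_0}\circ\psi_{p_0}\circ\tau_{r_0}$ (in $\mathcal G$ by construction), and hence it lies in $\mathcal G$ because $\mathcal G$ is a group.

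The main obstacle I anticipate is the relative closedness of the sets $C_N$, which is what makes Baire's theorem applicable: here one must pass an entire package — boundedness, the Jacobian lower bound, invertibility, the inclusion into $M$, and in the $\mathcal G$-case the germ membership — simultaneously to a Montel limit, and this is exactly where Lemma \ref{limitinv} and the closedness of $\mathcal G$ are used. By contrast, once the outer factors $\psi_q$ have been placed on a single fixed ball, the relocation of the base point and the verification of the uniform estimates are comparatively soft.
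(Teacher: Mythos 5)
Your proposal is correct and matches the paper's own argument essentially step for step: a Baire category argument on countably many relatively closed sets of points reachable by uniformly bounded, Jacobian-controlled holomorphic equivalences on a fixed ball (closedness via Montel plus Lemma \ref{limitinv}, and the closedness of $\mathcal G$ for the germ condition), followed by relocating the base point through composition with $\psi_{p_0}^{-1}$ and the same group-theoretic factorization $\tau_{-q}\circ\phi_q\circ\tau_{p_0}=\bigl(\tau_{-q}\circ\psi_q\circ\tau_{r_0}\bigr)\circ\bigl(\tau_{-r_0}\circ\psi_{p_0}^{-1}\circ\tau_{p_0}\bigr)$. The only difference is cosmetic: you index the exhaustion by a single integer $N$ tying together radius, bound and Jacobian lower bound, where the paper uses pairs $(r,\epsilon)\in\mathbb Q_+^2$.
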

\begin{proof}
Let $p\in M$; we are first going to concentrate on the case $\mathcal G = {\rm Diff}^\omega(\mathbb R^n,0)$. The statement we want to prove is local, thus we we can just consider $M\cap B_1(p)$, where $B_1(p)$ is the ball of center $p$ and radius $1$. Let $\mathbb Q_+ = \{x\in \mathbb Q: x>0\}$. For any pair $(r,\epsilon)\in \mathbb Q_+^2$, we define
\[A_{r,\epsilon}=\{q\in B_1(p)\cap M: \exists \psi\in {\rm Hol}_*(B_r(p)) {\rm s.t.} |\psi-p|\leq 2\ {\rm and}\ |J\psi|\geq\epsilon\  {\rm on}\ B_r(p), \]
\[\psi_{|B_r(p)\cap \mathbb R^n}\in {\rm Diff}^\omega_*(B_r(p)\cap \mathbb R^n), \psi(p)=q, \psi(M)\subset M \}. \]

Then we have
\[\bigcup_{(r,\epsilon)\in \mathbb Q_+^2} A_{r,\epsilon} = B_1(p)\cap M;\]
indeed, let $q\in B_1(p)\cap M$, and let $\psi$ be a real-analytic diffeomorphism preserving $M$, defined in a neighborhood of $p$ in $\mathbb R^n$, such that $\psi(p)=q$. We can extend $\psi$ to a holomorphic mapping (still denoted by $\psi$) defined in some ball $B_r(p)$ in $\mathbb C^n$; since $|q-p|<1$, by shrinking $r$ we can achieve that $|\psi(z)-p|<2$ for $z\in B_r(p)$. Moreover, since the Jacobian $J\psi(p)$ is non-vanishing, by shrinking $r$ again if necessary we obtain that $|J\psi|$ is never vanishing on $\overline B_r(p)$ and, moreover, $\psi$ admits an inverse defined on $\psi(B_r(p))$. Taking $(r',\epsilon)\in \mathbb Q_+^2$ such that $r'<r$ and $\epsilon<\inf_{B_r(p)} |J\psi|$ we have that $q\in A_{r',\epsilon}$. 

We will now show that $A_{r,\epsilon}$ is a closed subset of $B_1(p)\cap M$ for any fixed $(r,\epsilon)\in \mathbb Q_+^2$. Let $\{q_j\}_{j\in \mathbb N}$ be a sequence contained in $A_{r,\epsilon}$ such that $q_j\to q\in B_1(p)\cap M$, and let $\{\psi_j\}_{j\in \mathbb N}$ be a sequence of mappings in ${\rm Hol}_*(B_r(p))$, satisfying the properties required in the definition of $A_{r,\epsilon}$, such that $\psi_j(p) = q_j$. Since $|\psi_j - p|\leq 2$ on $B_r(p)$ for all $j$, the sequence $\{\psi_j\}$ is uniformly bounded and by Montel's theorem we have (up to selecting a subsequence) that $\psi_j\to \psi \in {\rm Hol}(B_r(p))$ as $j\to \infty$, where the convergence is uniform on compact subsets of $B_r(p)$. By Lemma \ref{limitinv} we deduce that, in fact, $\psi\in {\rm Hol}_*(B_r(p))$. The fact that $\psi(p)=q$, as well as all the properties defining the elements of $A_{r,\epsilon}$, follow by continuity; we conclude that $q\in A_{r,\epsilon}$.

Applying Baire's theorem, we deduce that there exists $(r_0,\epsilon_0)\in \mathbb Q_+^2$ such that $A_{r_0,\epsilon_0}$ has non-empty interior; let $A'$ be an open subset contained in $A_{r_0,\epsilon_0}$, and for any $q\in A'$ denote by $\psi_q$ an element of ${\rm Hol}_*(B_{r_0}(p))$ such that $\psi_q(p)=q$ and satisfying the conditions defining $A_{r_0,\epsilon_0}$. Let $p_0\in A_{r_0,\epsilon_0}$; taking $U=\psi_{p_0}(B_{r_0}(p))$,
the family of mappings $\phi_q = \psi_q \circ \psi_{p_0}^{-1}$, $q\in A'\cap U$, fulfills the requirements of the lemma.

If $\mathcal G$ is a general closed subgroup of ${\rm Diff}^\omega(\mathbb R^n,0)$ we follow the same proof, the only change being that we add to the definition of $A_{r,\epsilon}$ the requirement that the germ of $\tau_{-q} \circ (\psi|_{\mathbb R^n})\circ \tau_p$ at $0$ belongs to $\mathcal G$. The fact that $\cup A_{r,\epsilon} = B_1(p)\cap M$ follows with the same argument as before, by applying the definition of $\mathcal G$-homogeneity. In the proof that $A_{r,\epsilon}$ is closed, considering the same sequence $\{\psi_j\}\subset {\rm Hol}_*(B_r(p))$ as above, we have that $\tau_{-q_j}\circ (\psi_j|_{\mathbb R^n})\circ \tau_p$ converge uniformly to $\tau_{-q}\circ(\psi|_{\mathbb R^n})\circ \tau_p$ on a fixed neighborhood of $0$ in $\mathbb R^n$. The fact that the germ of $\tau_{-q}\circ(\psi|_{\mathbb R^n})\circ \tau_p$ at $0$ belongs to $\mathcal G$ is then a direct consequence of the definition of closed subgroup of ${\rm Diff}^\omega(\mathbb R^n,0)$. Finally, at the conclusion of the proof we have that 
\[\tau_{-q}\circ(\phi_q|_{\mathbb R^n})\circ \tau_{p_0} = \left (\tau_{-q}\circ(\psi_q|_{\mathbb R^n})\circ  \tau_p \right ) \circ \left ( \tau_{-p}\circ(\psi^{-1}_{p_0}|_{\mathbb R^n})\circ \tau_{p_0} \right )\]
thus, since $\mathcal G$ is a subgroup, the germ at $0$ of $\tau_{-q}\circ(\phi_q|_{\mathbb R^n})\circ \tau_{p_0}$ belongs to $\mathcal G$.
\end{proof}

\subsection{Real-analyticity of certain $\mathcal G$-homogeneous curves}\label{proofpro}

From now on we are going to only consider the case of planar curves, that is we will assume that $M=\gamma\subset \mathbb R^2$ is a $1$-dimensional curve of class $C^\infty$ and we will use Theorem \ref{Baire} for $n=2$. Suppose that $\gamma$ is $\mathcal G$-homogeneous for some closed subgroup $\mathcal G\leq {\rm Diff}^\omega(\mathbb R^2,0)$. The results of the previous section show that it is possible to choose $p\in \gamma$, a neighborhood $U$ of $p$ in $\mathbb R^2$ and a family $\{\phi_q\}_{q\in \gamma\cap U}$ of real-analytic automorphisms of $\gamma$, all defined and uniformly bounded on $U$, such that $\phi_q(p) = q$. We have, however, no information about the dependence of the map $\phi_q$ on $q$, which needs not be smooth and might in principle be wildly discontinuous.

The purpose of this section is to show that, under a certain additional assumption on the group $\mathcal G$, the behavior of the family $\{\phi_q\}$ must be very rigid. This allows to apply the theory outlined in Section \ref{Cartan}, obtaining a non-trivial analytic vector field whose flow leaves $\gamma$ invariant, which in turn implies that $\gamma$ is real-analytic.

Let $\psi\in {\rm Diff}^\omega(\mathbb R^2,0)$, and let $\lambda$ be a $1$-dimensional curve of class $C^\infty$ around $0\in \mathbb R^2$, that is a closed $1$-dimensional submanifold of an open neighborhood of $0$ in $\mathbb R^2$ such that $0\in \gamma$. The curve $\lambda$ is called an \emph{invariant curve} for $\psi$ if $\psi(q) \in \lambda$ for any  $q\in \lambda$ close enough to $0$. Given, now, a subgroup $\mathcal G\leq {\rm Diff}^\omega(\mathbb R^2,0)$, we are interested in the following condition (we recall that we denote by $\mathcal R$ the set of germs of order 2, see section \ref{defnot}):
\begin{itemize}
\item[(ICP)] for every $g\in \mathcal G$, $g\not\in \mathcal R$, and for any curve $\lambda$ of class $C^\infty$ which is invariant under $g$ we have that $\lambda\setminus \{0\}$ is real-analytic, that is for any $p\in \lambda$, $p\neq 0$, there is a neighborhood $U$ of $p$ in $\mathbb R^2$ such that $\lambda\cap U$ is real-analytic.
\end{itemize}
which we call the \emph{invariant curve property}.

\begin{lemma} \label{onlyone}
Let $\mathcal G$ be a group satisfying (ICP), and let $\gamma$ be a nowhere analytic curve of class $C^\infty$, $0\in \gamma$. Then there exists at most one element $g\in \mathcal G\cap \mathcal R$, $g\neq Id$, such that $\gamma$ is $g$-invariant. 
\end{lemma}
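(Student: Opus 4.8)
The plan is to argue by contradiction: assuming that two distinct non-identity involutions $g_1,g_2\in\mathcal G\cap\mathcal R$ both leave $\gamma$ invariant, I will derive a contradiction with the nowhere-analyticity of $\gamma$. The object to examine is the product $h=g_1\circ g_2$, which lies in $\mathcal G$ (a group) and again leaves $\gamma$ invariant; moreover $h\neq Id$, since $h=Id$ would force $g_2=g_1^{-1}=g_1$. The proof then splits according to whether $h$ is itself an involution. If $h\notin\mathcal R$, then $h$ is a non-involutive element of $\mathcal G$ admitting the invariant curve $\gamma$, so property (ICP) applies directly and yields that $\gamma\setminus\{0\}$ is real-analytic; as $\gamma$ has points other than $0$ and is nowhere analytic, this is already a contradiction. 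It therefore remains to rule out the case in which $h$ is an involution, and this is where the main work lies.

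To handle the involution case I would first establish the following structural fact, which I expect to be the heart of the argument: \emph{any non-identity involution $g\in\mathcal G\cap\mathcal R$ leaving the nowhere-analytic curve $\gamma$ invariant must restrict to $\gamma$ as an orientation-reversing involution}, i.e.\ identifying the germ $(\gamma,0)$ with $(\mathbb R,0)$ via a smooth parametrization, the derivative of $g|_\gamma$ at $0$ equals $-1$. Indeed, $g|_\gamma$ is a smooth involution germ of $(\mathbb R,0)$ fixing $0$, so differentiating $(g|_\gamma)^{\circ 2}=Id$ gives $(g|_\gamma)'(0)=\pm1$. If the derivative were $+1$, then $g|_\gamma$ would be an increasing involution of an interval fixing $0$, and a short monotonicity argument (if $\tau(s)>s$ then applying the increasing map $\tau$ yields $s>\tau(s)$, a contradiction, and symmetrically for $\tau(s)<s$) forces $g|_\gamma=Id$, i.e.\ $g$ fixes $\gamma$ pointwise. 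But then $\gamma\subset\mathrm{Fix}(g)$, and since $g$ is analytic its fixed-point set is an analytic variety; using the classification of involutions in dimension two recalled in Section~\ref{defnot} (every non-identity $g\in\mathcal R$ is conjugate either to $(x,y)\mapsto(-x,-y)$ or to $(x,y)\mapsto(-x,y)$), $\mathrm{Fix}(g)$ is either $\{0\}$ or a smooth analytic curve. The first option contradicts $\dim\gamma=1$, and in the second $\gamma$ would coincide near each of its points with an analytic curve, contradicting nowhere-analyticity. Hence the derivative is $-1$, as claimed.

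With this fact in hand the involution case closes at once. Applying it to $g_1$ and $g_2$ shows that each of $g_1|_\gamma$ and $g_2|_\gamma$ has derivative $-1$ at $0$; by the chain rule their composition $h|_\gamma=(g_1|_\gamma)\circ(g_2|_\gamma)$ has derivative $(-1)(-1)=+1$, i.e.\ $h$ restricts to $\gamma$ as an orientation-preserving map. On the other hand $h$ is, in this case, a non-identity involution leaving $\gamma$ invariant, so the same structural fact forces $h|_\gamma$ to have derivative $-1$ — a contradiction. Thus the involution case cannot occur either, and in both cases the assumption of two distinct non-identity involutions is untenable; at most one can exist.

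I expect the main obstacle to be the structural fact of the second paragraph, and specifically the step ruling out that $g$ fixes $\gamma$ pointwise: this is where one must combine the elementary dynamics of one-dimensional involutions with the analyticity of $\mathrm{Fix}(g)$ and the hypothesis that $\gamma$ is nowhere analytic. Once the involutions are known to act with derivative $-1$ along $\gamma$, the parity obstruction (a product of two orientation-reversing maps is orientation-preserving, whereas a non-trivial invariant involution must reverse orientation) finishes the proof cleanly, and property (ICP) is used only to dispose of the complementary, non-involutive case.
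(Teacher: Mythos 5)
Your proof is correct and follows essentially the same route as the paper: form $h=g_1\circ g_2$, use (ICP) to exclude the non-involutive case, and exploit the fact that a non-trivial involution preserving a nowhere-analytic curve must act with derivative $-1$ along $T_0(\gamma)$, so that the product of two such maps would have derivative $+1$ along the curve, a contradiction. The only (harmless) divergence is in how that key fact is established: the paper linearizes the involution to $(x,y)\mapsto(x,-y)$ and notes that invariance of the graph forces $\gamma=\{y=0\}$, whereas you argue via one-dimensional dynamics that an increasing involution of $(\mathbb R,0)$ is the identity and then invoke the analyticity of $\mathrm{Fix}(g)$ --- both steps resting on the linearizability of involutions recalled in section \ref{defnot}.
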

\begin{proof}
Let $\phi\in \mathcal R$, and assume that $\gamma$ is $\phi$-invariant. Let $\mu=\pm 1$ be the eigenvalue of $d\phi(0)$ corresponding to the eigenvector $T_0(\gamma)$. We claim that either $\mu = - 1$ or $\phi=Id$. Note that if both the eigenvalues of $d\phi(0)$ are equal to $1$ then $\phi=Id$ (as the only element of $\mathcal R$ with this property is the identity). Thus if $\mu=1$ and $\phi\neq Id$ there exists an analytic change of coordinates such that $\phi(x,y) = (x,-y)$ and $T_0(\gamma) = \partial/\partial x$. It follows immediately that, in these coordinates, $\gamma = \{y=0\}$, contradicting the fact that $\gamma$ is nowhere analytic.

Suppose that there exist $Id\neq g_1,g_2\in \mathcal G\cap \mathcal R$ leaving $\gamma$ invariant and let $\phi = g_1\circ g_2 = g_1\circ g_2^{-1}$. Since $\gamma$ is $\phi$-invariant and nowhere analytic, by (ICP) we must have $\phi\in \mathcal G\cap \mathcal R$. On the other hand, by the first paragraph the eigenvalue of both $dg_1(0)$ and $dg_1(0)$ relative to the eigenvector $T_0(\gamma)$ is equal to $-1$, which implies that the corresponding eigenvalue for $d\phi(0)$ is $1$. Again by the first paragraph, it follows that $\phi = Id$, hence $g_1 = g_2$. 
\end{proof}

We are now in the position to provide a proof of Proposition \ref{Ghom}:
\begin{proof}[Proof of Proposition \ref{Ghom}]
Let $\{\phi_q\}$ be the family of holomorphic diffeomorphisms described in Lemma \ref{Baire} (where we can assume that $p_0 =0$), and suppose by contradiction that $\gamma$ is not real-analytic. By homogeneity, $\gamma$ must then be nowhere real-analytic. Applying Lemma \ref{onlyone}, we have that the subgroup of the germs $\psi\in\mathcal G$ such that $\gamma$ is $\psi$-invariant in a neighborhood of $0$ is either trivial or is of the form $\{g,Id\}$ for some $g\in \mathcal G\cap \mathcal R$. After possibly shrinking the open set $U$, the set of maps $H = \{\phi_q\} \cup \{\phi_q \circ g\}$ clearly satisfies all the properties of Lemma \ref{Baire} with the only exception that for $p\in \gamma\cap U$ there are two elements of $H$ mapping $0$ to $p$ instead of one. 

By a suitable affine transformation, we can assume that $U$ contains the ball $B_1(0)$ and restrict $H$ to the family $\{\phi_q\}_{q\in \gamma\cap B_1(0)}\cup \{\phi_q\circ g\}_{q\in \gamma\cap B_1(0)}$. By Lemma \ref{Baire}, the maps of $H$ are uniformly bounded by a constant $C>0$, hence by Montel's theorem $H$ fulfills property (a) (see the second paragraph of Section \ref{Cartan}). We can thus define neighborhoods of $Id$ in $H$ of the form $H_\epsilon = \{\phi\in H : \delta_{\overline B_{1/2}(0)}(\phi)\leq \epsilon \}$.

Our next step is to prove the following fact. Let  $\{q_i\}_{i\in \mathbb N}\subset \gamma$ with $q_i \to q_0\in \gamma$ and $\{\varphi_i\}_{i\in \mathbb N} \subset H$ such that $\varphi_i(0) = q_i$. Then any convergent subsequence of $\{\varphi_i\}$ (note that by Montel's theorem there is at least one) converges to an element of $H$. Indeed, let $\psi\in {\rm Hol}(B_1(0))$ be the limit of a convergent subsequence of $\{\varphi_i\}$: by continuity, $\psi(0) = q_0$ and $\psi(\gamma)\subset \gamma$. Furthermore, the fact that $\mathcal G$ is a closed subgroup of ${\rm Diff}^\omega(\mathbb R^2,0)$ implies that the germ of $\tau_{-q_0} \circ (\psi|_{\mathbb R^2})$ at $0$ belongs to $\mathcal G$. Let $V$ be a small neighborhood of $0$ in $\mathbb R^2$ such that $\psi(V) \subset \phi_{q_0}(B_1(0))$; we can thus define a map $\Psi\in {\rm Diff}^\omega(V)$ such that $\Psi(0)=0$ as $\Psi = \phi_{q_0}^{-1} \circ \psi$. Since the germ induced by $\Psi$ at $0$ belongs to $\mathcal G$, and $\gamma$ is nowhere analytic and $\Psi$-invariant, by (ICP) and Lemma \ref{onlyone} we have that this germ is either the identity or $g$. By analytic continuation we conclude that either $\psi \equiv \phi_{q_0}$ or $\psi \equiv  \phi_{q_0}\circ g$ on $B_1(0)\cap \mathbb R^2$ and hence on $B_1(0)$; in both cases we have $\psi\in H$.

We will now show that for $\epsilon_0< \delta_{\overline B_{1/2}(0)}(g)$ the set $H_{\epsilon_0}$ fulfills  the following property: there is a neighborhood $F$ of $0$ in $\gamma$ such that, for any $q\in F$, there exists exactly one $\varphi_q\in H_{\epsilon_0}$ such that $\varphi_q(0) = q$. Indeed, otherwise there would either exist a sequence $q_i\to 0$ such that $\phi_{q_i}$ and $\phi_{q_i}\circ g$ both lie in $H_{\epsilon_0}$, or a sequence $q_i\to 0$ such that none of them does. In the first case, taking a convergent subsequence as above we can find a sequence $\{\varphi_i\}\subset H_{\epsilon_0}$ such that either $\varphi_i\to g$ (this is an immediate contradiction since $g\not\in H_{\epsilon_0}$) or $\varphi_i\to Id$, but then we have $\{\varphi_i\circ g\}\subset H_{\epsilon_0}$ and $\varphi_i\circ g \to g$, again a contradiction. The second case leads to a contradiction by the same procedure.

For any $q\in F$, we now define $\varphi_q=\phi_q$ if $\phi_q\in H_{\epsilon_0}$ and $\varphi_q =  \phi_q\circ g$ otherwise. Set $G=\{\varphi_q\}_{q\in F}$. We claim that $G$ is a locally closed, local group of holomorphic transformations, in the sense defined in Section \ref{Cartan}. As before, we put $G_\varepsilon = \{\varphi\in G : \delta_{\overline B_{1/2}(0)}(\varphi)\leq \varepsilon \}$

We can verify that the map $\gamma\ni q \to \varphi_q \in G$ is continuous for $q$ close to $0$. Indeed, suppose it is not so, and let $q_0\in F$, $\{q_i\}_{i\in \mathbb N}\subset F$ and $c>0$ be chosen in such a way that $q_i \to q_0$ as $i\to \infty$ but $\delta_{B_{1/2}(0)}(\varphi_{q_i},\varphi_{q_0}) > c$  for $i\in \mathbb N$. As already shown, up to taking a subsequence we have that $\{\varphi_{q_i}\}$ converges uniformly on the compact subsets of $B_1(0)$ to a map $\psi\in H_{\epsilon_0}$ such that $\psi(0)=q_0$. By construction $\psi = \varphi_{q_0}$, which gives a contradiction. 

A completely analogous argument shows that $G$ is locally compact, and thus locally closed. Moreover, repeating the argument with $q_0 = 0$ we find a sequence $\{\varphi_i\}\subset G$ which  converges to $Id$, showing that $Id$ is not isolated in $G$. In order to see that $G$ is a local group of transformations we must now prove that $G$ satisfies (b) and (c).

We will check that $G$ satisfies property (b) with  $G' = G_{\kappa}$ with $\kappa< \min\{\frac{\epsilon_0}{2}, \frac{1}{4}{\rm diam}F \}$. Let $\varphi_{q_1}, \varphi_{q_2}\in G'$, and let $q_3=\varphi_{q_2}(q_1)$; note that $q_3\in F$ by definition of $G'$. Let $\varphi_{q_3}\in G$ be such that $\varphi_{q_3}(0) = q_3$ and consider the map $\Phi\in {\rm Hol}(\varphi_{q_1}^{-1}(B_1(0)))$ obtained as the composition $\Phi = \varphi_{q_2}\circ \varphi_{q_1}$. Then $\Phi(0) = q_3$ and the fact that $\varphi_{q_1}\in G'$ implies that $\Phi$ is defined (at least) over $B_{1/2}(0)$. Let $V$ be a neighborhood of $q_3$ in $\mathbb R^2$ which is contained $\varphi_{q_3}(B_1(0))$; then we can select a small enough neighborhood $W\subset B_{1/2}(0)$ of $0$ in $\mathbb R^2$ such that $\Phi(W)\subset V$. With such a choice, we have that the map $\varphi_{q_3}^{-1}\circ \Phi$ is defined on $W$ and $\varphi_{q_3}^{-1}\circ \Phi(0) = 0$. Moreover, by Lemma \ref{Baire} and by the assumption that $\mathcal G$ is a uniform subgroup of ${\rm Diff}^\omega(\mathbb R^2,0)$, the germ induced by $\varphi_{q_3}^{-1}\circ \Phi$ at $0$ belongs to $\mathcal G$ and the germ of $\gamma$ at $0$ is a smooth, nowhere analytic invariant curve for it. By (ICP) and Lemma \ref{onlyone} the germ of $\varphi_{q_3}^{-1}\circ \Phi$ is either $Id$ or $g$, i.e.\ $\Phi$ either coincides with $\varphi_{q_3}$ or with $\varphi_{q_3}\circ g$ on $W$ (and by analytic continuation, everywhere). The second case is not possible since by construction $\delta_{B_{1/2}}(\Phi)<\epsilon_0$ and $\delta_{B_{1/2}}(\varphi_{q_3}\circ g)>\epsilon_0$. Thus we have $\Phi\equiv \varphi_{q_3}$; in particular, $\Phi$ extends holomorphically to $B_1(0)$ and is bounded by $C$.


Choose, now, $\Delta\Subset B_1(0)$ as required by condition (b), and select $\eta(\Delta)<{\rm dist}(\overline\Delta, bB_1(0))$ such that $\delta_{B_{1/2}}(\phi)<\kappa$ for all $\varphi\in G$ for which $\delta_{\Delta}(\phi)<\eta(\Delta)$. This choice is possible because $G$ satisfies condition (a), and immediately implies that (b)-(1) and (b)-(2) are fulfilled for $\eta(\Delta)$. To verify (b)-(3), pick $\varphi_{q_1}\in G$ such that $\delta_{\Delta}(\varphi_{q_1})<\eta(\Delta)$ and $\varphi_{q_2}\in G'$. Then the composition of $\varphi_{q_1}$ and $\varphi_{q_2}$ requested in (b)-(3) is given by $\varphi_{q_3}\in G$, with $q_3 = \varphi_{q_2}(q_1)$. By the discussion of the paragraph above, $\varphi_{q_3}$ is the analytic continuation of the map $\Phi = \varphi_{q_2} \circ \varphi_{q_1}$, which is a priori defined on $\varphi_{q_1}^{-1}(B_1(0))$. On the other hand, by construction $\Delta \subset \varphi_{q_1}^{-1}(B_1(0))$, hence for all $z\in \Delta$ we have $\varphi_{q_3}(z) = \Phi(z) = \varphi_{q_2}(\varphi_{q_1}(z))$. This shows that also (b)-(3) is fulfilled.

The verification of (c) is quite similar. We start by choosing a ball $B_r(0)$, with the radius $r<1/2$ small enough, so that the following points hold:
\begin{itemize}
\item $bB_{r}(0)\cap \gamma$ consists of exactly two points $p_1,p_2$,  belonging to different connected components $\gamma'$, $\gamma''$ of $\gamma\setminus \{0\}$;
\item  $B_{r/2}(p_1)\cap \gamma \subset \gamma'$, $B_{r/2}(p_2)\cap \gammaÊ\subset \gamma''$, and $[p_1,p_2]\subset \overline B_{r}(0)$ where $[p_1,p_2]$ is the segment of $\gamma$ comprised between $p_1$ and $p_2$;
\item $\varphi_q\in G'$ for all $q\in [p_1,p_2]$.
\end{itemize}
The choice of $r>0$ satisfying the first two condition is possible because of the smoothness of $\gamma$. The third point is verified for $r$ small enough because of continuity of the map $q\to \varphi_q$. With this choice of $r$, we define the neighborhood $G''\subset G'$ required by condition (c) as $G'' = G_{r/2}$. Then, given $\varphi\in G''$ we have $\varphi(p_1)\in \gamma'$, $\varphi(p_2)\in \gamma''$, from which follows that $\varphi([p_1,p_2])\subset\gamma$ is a connected subset intersecting both $\gamma'$ and $\gamma''$, hence $0\in \varphi([p_1,p_2])$. (An alternative way of checking that $0\in \phi(B_1(0))$ for $\phi\in G''$ is by applying the several-variables version of the Rouch\'e theorem \cite{Ll}).

Let, then, $\varphi_{q_1}\in G''$;  by the arguments above we have that $q_2 = \varphi_{q_1}^{-1}(0) \in B_r(0)\cap \gamma$. Let $\varphi_{q_2}\in G$ such that $\varphi_{q_2}(0) = q_2\in [p_1,p_2]$; by construction, this implies in particular $\varphi_{q_2} \in G'$. The fact that $\varphi_{q_1}^{-1}$ coincides with $\varphi_{q_2}$ follows from (ICP) and Lemma \ref{onlyone} by considering the map $\Phi= \varphi_{q_1}\circ \varphi_{q_2}$ (defined on a neighborhood of $0$ in $\mathbb R^2$) and applying the same argument used in the verification of (b). This concludes the proof of the claim.

We are now in the position to apply Corollary \ref{puttog} to the local group $G$. We obtain a non-trivial holomorphic vector field $\Psi = \sum_j \psi_j \partial / \partial z_j$ on $B_1(0)$ such that, denoting by $\psi_t(z) = \psi(z,t)$ the flow of ${\rm Re}\Psi$, we have $\psi_t\in G$ for all $t$ in a neighborhood of $0$ in $\mathbb R$. We observe here that, defined $\psi = (\psi_1,\psi_2)$, $\psi|_{\mathbb R^2}$ is valued in $\mathbb R^2$; this is true because $\psi$ is obtained as the limit of maps of the kind $m_j(\varphi_{q_j}-Id )$ (see Theorem \ref{Th3}). In particular the fact that $\psi$ does not vanish identically implies that ${\rm Re}\psi|_{\mathbb R^2} = ({\rm Re}\psi_1|_{\mathbb R^2}, {\rm Re}\psi_2|_{\mathbb R^2})= (\psi_1|_{\mathbb R^2}, \psi_2|_{\mathbb R^2})$ does not vanish identically, because the latter would imply $\psi|_{\mathbb R^2}\equiv 0$. We deduce that the vector field ${\rm Re}\Psi|_{\mathbb R^2}$ is a non-identically-vanishing, real analytic vector field on $\mathbb R^2\cap B_1(0)$.

We claim that $0\not\in \{ {\rm Re}\Psi|_{\mathbb R^2}=0 \}$. Indeed, that would imply that $\psi_t(0) = 0$ for all small $t\in \mathbb R$, but since $\psi_t\in G$ we would have $\psi_t = \varphi_0 = Id$ for all $t$, which contradicts the fact that ${\rm Re}\Psi|_{\mathbb R^2}$ is not identically vanishing. Since $0$ belongs to the set where ${\rm Re}\Psi|_{\mathbb R^2}$ is non-singular, from the fact that $\psi_t\in G$ -- hence $\psi_t(0)\in \gamma$ -- for all small enough $t\in \mathbb R$ we deduce that $\gamma$ locally coincides with the flow line of ${\rm Re}\Psi|_{\mathbb R^2}$ through $0$. It follows that $\gamma$ is real-analytic.
\end{proof}

\begin{remark}
In the previous proposition, we can replace the assumption \lq\lq $\gamma$ is $\mathcal G$-homogeneous, with $\mathcal G$ satisfying (ICP)\rq\rq\ with an assumption regarding the curve alone: \lq\lq $\gamma$ is ${\rm Diff}^\omega(\mathbb R^2,0)$-homogeneous and its only analytic isotropies are involutions\rq\rq\ (i.e.\ any analytic germ $\psi$ such that $\psi(0)=0$ which leaves $\gamma$ invariant belongs to $\mathcal R$). In this case, the argument of Lemma \ref{onlyone} still implies that the isotropy group of $\gamma$ (should it be nowhere analytic) consists of a single element of $\mathcal R$, and this is the only fact (together with Lemma \ref{Baire}, which is valid for the whole group ${\rm Diff}^\omega(\mathbb R^2,0)$) which is actually needed in the proof of Proposition \ref{Ghom}.
\end{remark}

\subsection{Analyticity of invariant curves outside the origin}\label{aninvcur}

In this section, we are going to show that the property (ICP) is satisfied, in various forms, for some interesting subgroups $\mathcal G\leq {\rm Diff}^\omega(\mathbb R^2,0)$. In connection with Proposition \ref{Ghom}, this shows that for these subgroups $\mathcal G$ the curves which are $\mathcal G$-homogeneous are real-analytic.

We start with the case $\mathcal G = {\rm Hol}(\mathbb C,0)$ as mentioned in Lemma \ref{leaufatou}:


\begin{proof}[Proof of Lemma \ref{leaufatou}]
Write the expansion of $f$ around $0$ as $f(z) = \lambda z + O(z^2)$, $\lambda\in \mathbb C$. Since the differential of $f$ at $0$ must preserve the direction $T_0(\gamma)$, it follows that $\lambda\in \mathbb R\setminus \{0\}$. Up to replacing $f$ with $f^{\circ 2} = f\circ f$ (which is not the identity since $f\not \in \mathcal R$), we can assume $\lambda >0$, and possibly considering $f^{-1}$ instead of $f$ we can further suppose $\lambda \leq 1$.

If $0<\lambda< 1$, then $f$ is holomorphically conjugated to its linearization $\widetilde f(z) = \lambda z$ (see for example \cite[Theorem 2.1]{Br}); let $\widetilde \gamma$ be the image of $\gamma$ under the linearizing change of coordinates. Up to a rotation, we can assume that $T_0(\widetilde \gamma)$ is horizontal (i.e.\ generated by $\partial/\partial x$). Then $\widetilde \gamma$ coincides with the $x$-axis; otherwise, choose $p\in \widetilde \gamma$ such that $\arg T_p(\widetilde \gamma)\neq 0$ (here and in the rest of the proof, we are going to improperly apply the function $\arg$ to linear subspaces $T\subset\mathbb C$, defined modulo multiples of $\pi$ as the argument of a vector generating $T$). By invariance of $\widetilde \gamma$ under  $\widetilde f$  we have that $\arg T_{p_j}(\widetilde \gamma)=\arg T_p(\widetilde \gamma)\neq 0$ for all $j\in \mathbb N$, where $p_j= \widetilde f^{\circ j}(p) = \lambda^jp$: it follows that $\widetilde \gamma$ is not of class $C^1$ at $0$, a contradiction. Therefore, in this case the curve $\widetilde\gamma$ (and hence $\gamma$) is actually real-analytic around $0$.

To treat the case when $\lambda =1$ (the \emph{parabolic} case) we will use the Leau-Fatou flower theorem, which provides a description of the dynamics of such a germ $f$. Since we need to examine the proof of this result rather than its statement alone, we shall refer to the proof which is contained in \cite[Theorem 2.12]{Br}, and employ the notation set up in there.

We first recall the essential features of the theorem. We can write the Taylor expansion of $f$ as $f(z) = z + a_{k+1}z^{k+1} + O(z^{k+2})$ with $k\geq 1$, $a_{k+1}\in \mathbb C\setminus\{0\}$. The $k$ directions $v^+_1,\ldots,v^+_k\in bD \cong S^1$ which solve the equation $\frac{a_{k+1}}{|a_{k+1}|} v^k= -1$ are called \emph{attracting directions} for $f$. Then there exist simply connected domains $P_{v^+_1},\ldots, P_{v^+_k}\subset \mathbb C$ with the following properties:
\begin{itemize}
\item[(1)] $0\in bP_{v_j^+}$ and $f(P_{v_j^+})\subset P_{v_j^+}$;
\item[(2)] $\lim_{n\to \infty} f^{\circ n}(z) = 0$ and $\lim_{n\to \infty} \frac{f^{\circ n}(z)}{|f^{\circ n}(z)|} = v^+_j$ for all $z\in P_{v_j^+}$.
\end{itemize}
The domain $P_{v_j^+}$ is called \emph{attracting petal} centered at the direction $v_j^+$. The \emph{repelling directions} $v_j^-$ and the \emph{repelling petals} $P_{v_j^-}$ are the attracting directions/petals associated to the germ $f^{-1}$. The attracting and repelling petals can be so chosen that their union (plus the point $0$) is a neighborhood of $0$ in $\mathbb C$; also, from the proof follows that each petal locally contains an open sector centered at $0$. Moreover, for any $j=1,\ldots,k$ the action $f_{| P_{v^+_j}}$ is holomorphically conjugated to the map $\zeta \to \zeta + 1$, defined on a half-plane of the form $\{\zeta\in \mathbb C : {\rm Re}\zeta >C \}$ for some $C>0$.

Let $P$ be a petal which intersects $\gamma$; without loss of generality (possibly considering $f^{-1}$ in place of $f$ and conjugating with a complex linear transformation) we can suppose that $P$ is an attracting petal, centered at $v=1$. From the property (2) above we deduce that $T_0(\gamma)$ is the $x$-axis; it also follows that $\gamma\cap \{x>0\}$ is locally contained in $P$.

Let $\Psi$ be a map conjugating $f$ to $\zeta \to \zeta + 1$ (such a $\Psi$ is called \emph{Fatou coordinate}), and let $\widetilde \gamma$ be the image of $\gamma$ under $\Psi$. Our aim is to show that $\widetilde\gamma$ is of the form $\{y=y_0\}$ for some $y_0\in \mathbb R$. If $\widetilde \gamma$ does not coincide with a horizontal line, there exists $p\in \widetilde\gamma$ such that $\arg T_p(\widetilde\gamma) = \alpha \neq 0$. This of course also implies $\arg T_{p+n}(\widetilde\gamma) = \alpha$ for all $n\in \mathbb N$.

We are thus lead to computing the differential $d(\Psi^{-1})$ at the point $p+n$, which is given by the multiplication by a certain $\xi_n\in \mathbb C\setminus\{0\}$. We are going to show that $\arg \xi_n \to \pi$ as $n\to \infty$: posing $q_n = \Psi^{-1}(p+n)$, this would imply that $q_n\to 0$ and $\arg T_{q_n}(\gamma) = \arg \xi_n  + \alpha \to \pi + \alpha \neq \{0,\pi\}$ as $n\to \infty$, which would contradict the fact that $\gamma$ is of class $C^1$.

We turn then to the construction of the Fatou coordinate $\Psi$; as mentioned above, we follow the one given in \cite{Br}. The map $\Psi$ is obtained in two steps. First of all, the restriction of $f$ to a domain of the kind $\{|z^k - \delta| < \delta\}$ (for a small $\delta>0$) is conjugated, through the map $\psi(z) = 1/(kz^k)$, to a function $\varphi:H_\delta\to H_\delta$ of the kind 
\begin{equation}\label{firstcon}
\varphi(z) = z + 1 + b/z + R(z)
\end{equation}
 where $R(z) = O(1/z^2)$ and $H_\delta = \{ {\rm Re} w >1/2k\delta\}$. Afterwards, $\varphi$ is conjugated to the translation $\zetaÊ\to \zeta + 1$ through a holomorphic mapping $\sigma: H_\delta \to \mathbb C$ (that is to say, $\sigma\circ \varphi(z) = \sigma(z) + 1$), so that $\Psi = \sigma \circ \psi$.

Let $p\in \widetilde\gamma$ be the point chosen above. We set $r = \sigma^{-1}(p)$ and, for $n\in \mathbb N$, $r_n = \sigma^{-1}(p+n)$. It also follows that $r_n = \psi(q_n)$ (where the points $q_n=\Psi^{-1}(p+n)$ are defined above) and that $r_n = \varphi^{\circ n}(r)$. We have that $\arg r_n \to 0$ as $n\to \infty$. Indeed, for every $z\in H_\delta$ one has (see \cite[Th. 2.12, Eq. (2.18)]{Br}) $|\varphi^{\circ n}(z)|= O(n)$: by (\ref{firstcon}), this implies $r_{n+1}- r_n = 1 + O(1/n)$. It follows that for any $\epsilon>0$ there exists $n_0\in \mathbb N$ such that $|\arg (r_n - r_{n_0})| < \epsilon$ for all $n>n_0$ (in fact, for all large $n$, $r_{n+1}$ is contained in a sector centered at $r_n$ with opening angle less than $\epsilon$), which in turn implies that $|\arg r_n| < 2\epsilon$ for all large enough $n$, as claimed.

Computing, now, the derivative of $\psi^{-1}(z) = 1/(kz)^{1/k}$ gives $\frac{\partial}{\partial z}\psi^{-1}(z) = - 1/(kz)^{\frac{k+1}{k}}$. It follows that $\arg (\frac{\partial}{\partial z}\psi^{-1}(r_n)) = \pi - \frac{k+1}{k}\arg r_n \to \pi$ as $n\to \infty$. Since $\Psi^{-1} = \psi^{-1} \circ \sigma^{-1}$, with $\xi_n$ as previously defined we get $\xi_n = \frac{\partial}{\partial z}\psi^{-1}(r_n) \cdot \frac{\partial}{\partial z}\sigma^{-1}(p+n)$, thus to show that $\arg \xi_n \to \pi$ it is sufficient to show that $ \frac{\partial}{\partial z}\sigma^{-1}(p+n)\to 1$ or, equivalently, that $\frac{\partial}{\partial z}\sigma(r_n)\to 1$ as $n\to \infty$. We will concentrate on the latter.

The mapping $\sigma$ is constructed as the limit of the functions $\sigma_n(z) = \varphi^{\circ n}(z) - n - b\log n$; it can be shown that the sequence $\{\sigma_n\}_{n\in \mathbb N}$ is uniformly convergent on compact subsets of $H_\delta$. We will prove that, for any $\epsilon>0$, we can fix a sufficient large $n_0$ such that $|\frac{\partial}{\partial z}\sigma_j(r_n)-1| = |\frac{\partial}{\partial z}\varphi^{\circ j}(r_n) - 1| <\epsilon$ for all $n\geq n_0$ and $j\in \mathbb N$. This will imply that $|\frac{\partial}{\partial z}\sigma(r_n) - 1|<\epsilon$ for $n\geq n_0$, which is the desired conclusion.

In order to do this, we need to estimate the derivatives of $\varphi^{\circ j}$. Let $R(z)$ be the function appearing in the expression (\ref{firstcon}); then $R$ is not obtained as a convergent series in $1/z$ (indeed, one should not expect $\varphi$ to extend meromorphically to a neighborhood of $\infty$). However, from the computation performed in the proof of the Leau-Fatou theorem follows that there is a convergent series $S\in \mathbb C\{x\}$, $S(x)=\sum_{i\geq 2k} s_i x^i$, such that $R(z) = S(1/z^{\frac{1}{k}})$. Since $S'(x)\leq C_0|x|^{2k-1}$ for some $C_0>0$ and 
\[\frac{\partial}{\partial z}R(z) =  S' \left(\frac{1}{z^{\frac{1}{k}}} \right)\cdot \left(- \frac{1}{z^{\frac{k+1}{k}}}\right),\]
we get $| \frac{\partial}{\partial z}R(z)| \leq C_0 /|z|^{ \frac{2k-1}{k} + \frac{k+1}{k}} = C_0 /|z|^{3}$. Posing $T(z) = -b/z^2 + \frac{\partial}{\partial z}R(z)$, we deduce that $|T(z)|\leq C_1/|z|^2$ for some $C_1>0$ and $\frac{\partial}{\partial z} \varphi(z) = 1 + T(z)$. Now from (\ref{firstcon}) we get, for all $z\in H_\delta$,
\[\varphi^{\circ (j+1)}(z) = \varphi^{\circ j}(z) + 1 + \frac{b}{\varphi^{\circ j}(z)} + R(\varphi^{\circ j}(z)) \]
differentiating which we obtain
\[ \frac{\partial}{\partial z}\varphi^{\circ(j+1)}(z) - \frac{\partial}{\partial z}\varphi^{\circ j}(z) = T(\varphi^{\circ j}(z))\cdot \frac{\partial}{\partial z}\varphi^{\circ j}(z)  \]
so that
\begin{equation}\label{sothat}
\left | \frac{\partial}{\partial z}\varphi^{\circ(j+1)}(z) - \frac{\partial}{\partial z}\varphi^{\circ j}(z)\right | \leq \frac{C_1}{|\varphi^{\circ j}(z)|^2}\cdot \left |\frac{\partial}{\partial z}\varphi^{\circ j}(z)\right |.
\end{equation}
Fix any small $\epsilon>0$. Then we can select $n_1\in \mathbb N$ such that $\sum_{n=n_1}^\infty  1/n^2 <\epsilon/8C_1$. Choose a point $z_1\in H_\delta$ such that ${\rm Re \,}z_1\geq n_1/2$. Again from the proof of the Leau-Fatou theorem in \cite{Br} (see Th. 2.12, Eq. (2.15)), we get that 
\begin{equation}\label{again} 
{\rm Re \,}\varphi^{\circ j}(z) >  {\rm Re \,}z + j/2 \ \ \ (\Rightarrow |\varphi^{\circ j}(z)| > {\rm Re \,}z + j/2)
\end{equation}
for all $z\in H_\delta$, $j\in\mathbb N$. We will now prove by induction that $|\frac{\partial}{\partial z}\varphi^{\circ j}(z_1) - 1|< \epsilon$ for all $j\in \mathbb N$. For $j=1$, by definition of $T$ we have $\frac{\partial}{\partial z}\varphi(z_1) - 1 = T(z_1)$, hence $|\frac{\partial}{\partial z}\varphi(z_1) - 1| = |T(z_1)| \leq C_1/|z_1|^2 \leq 4C_1/(n_1)^2 < \epsilon/2$. Suppose, then, that for some $j\in \mathbb N$ we have $|\frac{\partial}{\partial z}\varphi^{\circ j}(z_1) - 1| \leq 8 C_1\sum_{i=n_1}^{n_1 + j-1} 1/i^2 < \epsilon$; in particular, this implies $|\frac{\partial}{\partial z}\varphi^{\circ j}(z_1)|<2$. Using (\ref{sothat}) and (\ref{again}), we get
\[\left | \frac{\partial}{\partial z}\varphi^{\circ(j+1)}(z_1) -  1\right | \leq \left | \frac{\partial}{\partial z}\varphi^{\circ(j+1)}(z_1) - \frac{\partial}{\partial z}\varphi^{\circ j}(z_1)\right |  + \left | \frac{\partial}{\partial z}\varphi^{\circ j}(z_1) - 1\right | \leq\]
\[\leq  \frac{4C_1}{(n_1+j)^2}\cdot 2 +  8 C_1\sum_{i=n_1}^{n_1 + j-1} \frac{1}{i^2} = 8C_1 \sum_{i=n_1}^{n_1 + j} \frac{1}{i^2}< \epsilon,\]
which gives the inductive step. Summing up, the previous argument provides the estimate $|\frac{\partial}{\partial z}\sigma(z_1) - 1|<\epsilon$ for all $z_1$ satisfying ${\rm Re \,}z_1\geq n_1/2$. On the other hand, by (\ref{again}) follows immediately that ${\rm Re \,}r_n = {\rm Re \,}\varphi^{\circ n}(r) \to \infty$ as $n\to \infty$. Together with the previous statement, this implies $\frac{\partial}{\partial z} \sigma(r_n) \to 1$ as $n\to \infty$, which concludes the proof of the lemma.
\end{proof}

Combining Lemma \ref{leaufatou} and Proposition \ref{Ghom}, we obtain Theorem \ref{holhomo}.


\

Next, we deal with $\mathcal G = {\rm Shr}^\omega(\mathbb R^2,0)$. In this case, we actually prove a slightly weaker form of (ICP) (for curves $\gamma$ which are a graph over $\mathbb R_x$), see Lemma \ref{invshr}. However, this is of course still sufficient to prove Proposition \ref{Ghom} -- if $\gamma$ is nowhere a graph over $\mathbb R_x$, then it must be of the form $\gamma=\{x=c\}$, hence it is real-analytic anyway.

\begin{proof}[Proof of Lemma \ref{invshr}]
The shear $\phi$ has the following form:
\[\phi(x,y) = (h(x), y + g(x))\]
where $h, g$ are germs of functions real-analytic around $0$ in $\mathbb R$ and $d h(0)/dx \neq 0$. We can write $h(x) = \lambda x + O(x^2)$, where $\lambda\in \mathbb R \setminus \{0\}$. Replacing $\phi$ by $\phi^{\circ 2}$ or $\phi^{\circ (-2)}$ (none of them is the  identity since $\phi\not \in \mathcal R$) if necessary, we can further assume that $0<\lambda\leq 1$.

Suppose, first, that $0<\lambda <1$.  The local holomorphic extension $h(z)$ of $h$ to a neighborhood of $0$ in $\mathbb C$ has the form $h(z)=\lambda z + O(z^2)$, hence it is linearizable by a local holomorphic change of coordinates $\eta(z)$. Since the restriction $h(x)$ of $h(z)$ to the real axis is real-valued, the same holds for the restriction $\eta(x)$ of $\eta(z)$ (cfr. \cite[Proposition 1.9]{Br}, where the coefficients of the series defining $\eta$ are explicitly computed). Conjugating $\phi$ by the map $(x,y)\to (\eta(x),y)$, it follows that in the new coordinates we can assume it to take the form $\phi(x,y)=(\lambda x, y + g_1(x))$ for a certain $g_1\in C^\omega(\mathbb R,0)$. Let $\{y=f_1(x)\}$ be the expression of $\gamma$ in these coordinates. The invariance of $\gamma$ under $\phi$, then, translates into the following identity
\begin{equation}\label{translates}
f_1(x) + g_1(x) = f_1(\lambda x),
\end{equation}
holding for $x$ in a small enough neighborhood of $0$ in $\mathbb R$. We can show by a direct power series computation that (\ref{translates}) admits, locally, a real-analytic solution $\widetilde f_1$. Indeed, if $g_1(x)=\sum_{j=1}^\infty a_j x^j$, looking for $\widetilde f_1$ of the form $\widetilde f_1(x) = \sum_{j=1}^\infty b_j x^j$ we get
\[ \sum_{j=1}^\infty (1 - \lambda^j)b_j x^j= -\sum_{j=1}^\infty a_j x^j\]
which has the unique solution $b_j = - a_j/(1-\lambda^j)$, $j\in \mathbb N$. From $0<\lambda<1$ follows that the factor $(1-\lambda^j)^{-1}$ is uniformly bounded in $j$, thus the series $\widetilde f_1$ has a positive radius of convergence. Let $\widetilde \gamma$ be the germ of real-analytic curve defined by $\{y = \widetilde f_1(x)\}$; we claim that $\gamma = \widetilde \gamma$.

In order to verify the claim, fix $C>0$ such that $|g_1(x)|\leq C|x|$ in a neighborhood of $0$, and let $p_0\in \gamma$, $p_0=(x_0,y_0)$. Then, if $\{p_j\}_{j\in \mathbb N}$ is the orbit of $p_0$ under $\phi$ (i.e.\ $p_j = \phi^{\circ j}(p_0)$) we also have $\{p_j\}\subset \gamma$ by invariance. One verifies by induction that 
\[p_j=(x_j,y_j) = (\lambda^j x_0, y_0 + \sum_{k=0}^{j-1} g_1(\lambda^k x_0))\] 
for all $j\geq 1$. Now, since $p_j\in \gamma$ and $x_j\to 0$ as $j\to \infty$, from the fact that $0\in \gamma$ follows that we must also have $y_j\to 0$ as $j\to \infty$. It follows that $\Sigma(x_0)=\sum_{k=0}^\infty g_1(\lambda^k x_0)$ converges and that $y_0 = -\Sigma(x_0)$ is uniquely determined by the abscissa $x_0$ and by the components of the shear $\phi$. We deduce that $f_1(x) = -\Sigma(x)$ so that $\gamma$ is in turn uniquely determined, hence $\gamma=\widetilde \gamma$ is real-analytic. We also observe that, since $\sum_k |g_1(\lambda ^k x)| \leq C|x| (1-\lambda)^{-1}$, the series defining $\Sigma(x)$ is in fact absolutely convergent; one can also check by a straightforward power series computation that $\sum_k g_1(\lambda^k x) = \sum_j b_j x^j$ (with $b_j$ as above).

We turn now to the case $\lambda = 1$. We note that we cannot have $h(x)\equiv x$; otherwise, in view of (\ref{translates}) (with $\lambda = 1$) we would also get $g(x)\equiv 0$, against the assumption that $\phi$ is a non-trivial germ. After a linear change of coordinates, and possibly taking $\phi^{-1}$ in place of $\phi$, we can thus assume that $h$ has the expression $h(x) = x - x^{k+1} + O(x^{k+2})$ for some $k\geq 1$. A further real-analytic conjugation of $h$ allows to put it in the form $h(x) = x - x^{k+1} + ax^{2k+1} + O(x^{2k+2})$ (see \cite[Remark 1.14]{Br}). The equation expressing the invariance of $\gamma$ under $\phi$ now reads
\begin{equation}\label{reads}
f(x) + g(x) = f(h(x)) = f(x + O(x^{k+1})).
\end{equation}
Denote by $g(w)$ the holomorphic extension of $g$ to a neighborhood $U$ of $0$ in $\mathbb C_w$, and let $g(w) = g_\ell w^\ell + O(w^{\ell +1})$, $\ell\geq 1$ be the Taylor expansion of $g$. Note that, in the case when $f$ is of class $C^\infty$, taking the $k$-order Taylor expansion about $0$ of both sides in (\ref{reads}) we get that $\ell\geq k+1$; we will show that the same conclusion can be drawn if $f$ is just assumed to be continuous.
 For a large enough $C>0$, we have $\frac{1}{C}|w|^\ell\leq |g(w)|\leq C|w|^{\ell}$ for all $w\in U$.

As before, let $h(z) = z - z^{k+1} + az^{2k+1} + O(z^{2k+2})$ be the local holomorphic extension of $h$ to a neighborhood of $0$ in $\mathbb C_z$, $z = x + iu$. In what follows we recycle the terminology and the notation employed in Lemma \ref{leaufatou}. Since the coefficient of $z^{k+1}$ is $-1$ we have that $v=1\in S^1$  is an attracting direction for the parabolic germ $h$. By the Leau-Fatou flower theorem, the positive  $x$-axis is the center of (hence locally contained in) an attracting petal $P\subset \mathbb C$. Consider the map $\psi(z) = 1/k z^k$, conjugating $h|_P$ to a function $\varphi:H_\delta\to H_\delta$ of the kind $\varphi(z) = z + 1 + b/z + R(z)$. We recall, from equation (\ref{again}), that  $|\varphi^{\circ j}(z)| > {\rm Re \,}z + j/2 > j/2$ for all $z\in H_\delta$, $j\in \mathbb N$.

We can choose a small enough $R>0$ such that $h^{\circ j}(z)\in U$ for all $z\in B_R(0)\cap P$ and $j\in \mathbb N$. Let $B_r(x)$ (for small $x,r>0$) be a ball contained in $B_R(0)\cap P$; then $\psi(B_r(x))\subset H_\delta$, which in view of the previous paragraph implies $|\varphi^{\circ j}(\psi(z))| >  j/2$ for all $z\in B_r(x)$, $j\in \mathbb N$. Composing with the inverse of $\psi$ we get
\begin{equation}\label{withinv}
|h^{\circ j}(z)| = |\psi^{-1} \circ \varphi^{\circ j}\circ \psi(z)| = \frac{1}{(k |\varphi^{\circ j}(\psi(z))| )^{\frac{1}{k}}} \leq \frac{D}{j^{\frac{1}{k}}}
\end{equation}
for all $z\in B_r(x)$, $j\in \mathbb N$, where $D = (2/k)^{1/k}$. On the other hand, for any $z\in H_\delta$ we have $|\varphi^{\circ j}(z)| = O(j)$ (see Lemma \ref{leaufatou}), so that with the same argument we get $h^{\circ j}(x') \geq D' / j^{1/k}$  for all small enough $x'\in \mathbb R^+$  and $j\in \mathbb N$, where $D'>0$ is a constant depending on $x'$ -- note that $h^{\circ j}(x')>0$ for small $x'>0$. 

Suppose first that $\ell\leq k$; by the choice of the constant $C$ above, we get for any small $x'>0$ $|g(h^{\circ j}(x'))| \geq D'^{\ell} / C j^{\frac{\ell}{k}} \geq D'' / j$. Moreover, the sign of $g(h^{\circ j}(x'))$ is constant for $j\in \mathbb N$, depending only on the sign of $g_\ell$. It follows that in the case $\ell \leq k$ the series $\sum_{j=0}^\infty g(h^{\circ j}(x'))$ is divergent for any small $x'>0$.

If instead $\ell \geq k+1$, by (\ref{withinv}) and the choice of $C$ we have that $|g(h^{\circ j}(z))| \leq CD^{k+1} / j^{\frac{k+1}{k}}$ for all $z\in B_r(x)$, $j\in \mathbb N$. It follows that the series $\sum_{j=0}^\infty g(h^{\circ j}(z))$ converges uniformly over $B_r(x)$ to a holomorphic function $\Sigma(z)$. Since for any $x>0$ small enough there exists $r>0$ such that $B_r(x)\subset B_R(0)\cap P$, we conclude that in the case $\ell \geq k+1$ the series $\Sigma(x) = \sum_{j=0}^\infty g(h^{\circ j}(x))$ converges and defines a real-analytic function on a neighborhood of $0$ in $\mathbb R^+$.

We will now show that $\gamma\cap \{x>0\}$ is real-analytic in a neighborhood of $0$ (the treatment of $\gamma\cap \{x<0\}$ is similar). Fix $p_0=(x_0,y_0)\in \gamma$ with $x_0>0$ small enough, and let $\{p_j = \phi^{\circ j}(p_0)\}$ be the orbit of $p_0$ under the shear map $\phi$. In the same way as before, we can inductively compute
\[p_j=(x_j,y_j) = (h^{\circ j}(x_0), y_0 + \sum_{k=0}^{j-1} g(h^{\circ k} (x_0)) \]
for all $j\in \mathbb N$. By the Leau-Fatou theorem, since $x_0$ belongs to the attracting petal $P$ for $h(z)$, we have $x_j\to 0$ as $j\to \infty$; since $\{p_j\}\subset \gamma$ and $0\in \gamma$, we must again have $y_j\to 0$. It follows that the series $\sum_{k=0}^\infty g(h^{\circ k} (x_0))$ converges -- hence, by the discussion above, $\ell \geq k+1$ -- and that $y_0 = -  \sum_{k=0}^\infty g(h^{\circ k} (x_0)) = - \Sigma(x_0)$. In conclusion, we have that $f(x) = -\Sigma(x)$ is real-analytic for $x>0$, hence $\gamma \cap \{x>0\}$ is real-analytic and, furthermore, it is univocally determined by the germ $\phi$ (since the series defining $\Sigma(x)$ only depends on $g$ and $h$).
\end{proof}

\begin{remark} In general, even when a shear $\psi\in {\rm Shr}^\omega(\mathbb R^2,0)$ admits a (unique) invariant curve, this needs not be real-analytic around $0$. For instance, defining
\[ \psi(x,y) = \left( \frac{x}{1-x}, y + x^2 \right)\]
then one can verify that $\psi$ admits an (at least continuous) invariant curve $\gamma$, but $\gamma$ is not real-analytic -- although $\gamma\setminus \{0\}$ is. Indeed, following the proof of Lemma \ref{invshr} we can define $f(0) = 0$ and
\[f(x) = - \sum_{k=0}^\infty \left ( \frac{x}{1 - kx}\right )^2  {\rm for} \ x<0, \ f(x) = \sum_{k=1}^\infty \left ( \frac{x}{1 + kx}\right )^2 {\rm for} \ x>0 \]
so that $\gamma = \{y = f(x)\}$ is the unique invariant curve for $\psi$. Clearly $\gamma\setminus \{0\}$ is real-analytic, but a straightforward computation shows that $f$ is not of class $C^2$ around $0$. One can also check that $\psi$ admits a unique formal invariant curve of the form $\{y = \widehat f(x)\}$ with $\widehat f\in \mathbb R[[x]]$: it follows that $\widehat f$ cannot be convergent, otherwise by Lemma \ref{invshr} its sum would locally coincide with $f(x)$. As it turns out, the coefficients of $\widehat f$ are in fact given by the Bernoulli numbers (I thank H.C. Herbig for this observation).
\end{remark}

From the previous lemma and Proposition \ref{Ghom} follows Theorem \ref{shearhomo}.


An example of application of the previous result is Corollary \ref{diffquot}:

\begin{proof}[Proof of Corollary \ref{diffquot}]
The assumption is equivalent to the statement that the curve $\{y=f(x)\}$ is homogeneous with respect to analytic maps of the kind $(x,y) \to (x+t, y+g_t(x))$, which are (particular) elements of ${\rm Shr}^\omega(I_t \times \mathbb R)$.
\end{proof}
It is interesting to observe that the conclusion of the corollary above does not hold if we only assume that $g_t$ is real-analytic for $t$ belonging to a dense subset of $I$: an example of such a function is constructed in \cite{De2}.

\

We can also check elementarily that the invariant curve property  holds (in a stronger form) for $\mathcal G = GL(2,\mathbb R)$:

\begin{proof}[Proof of Lemma \ref{linicp}]
We first notice that  the linear space $T_0(\gamma)$ is invariant under $A$, hence $A$ admits a real eigenvalue. The roots of its characteristic polynomial are thus both real, which means that $A$ can be conjugated to either a diagonal transformation $(x,y)\to (\lambda_1 x, \lambda_2 y)$ or to $(x,y)  \to (\lambda x + y, \lambda y)$ for some $\lambda_1,\lambda_2, \lambda \in \mathbb R\setminus\{0\}$. 

In the first case, we can assume up to permutation of the variables that $\gamma$ is defined by $\{y=f(x)\}$; moreover, if needed we can replace $A$ with $A^2$ or $A^{-2}$ (which do not give the identity since $A\not \in \mathcal R$) to achieve $0< \lambda_1\leq 1$ and $\lambda_2>0$. The invariance of $\gamma$ under $A$ corresponds to the equation $f(\lambda_1 x) = \lambda_2 f(x)$, holding for $x$ in a neighborhood of $0$. 
From this identity we see that, if $\lambda_1=1$, either $f\equiv 0$ or $\lambda_2=1$; since by hypothesis $A\neq Id$, we can assume $0<\lambda_1<1$. We can thus fix $k\in \mathbb N$ such that $\lambda_1^k < \lambda_2$. Differentiating the invariance equation $k$ times yields $f^{(k)}(\lambda_1 x) = (\lambda_2 / \lambda_1^k) f^{(k)}(x)$; by iteration, we also have $f^{(k)}(\lambda_1^j x) = (\lambda_2/\lambda_1^k)^j f^{(k)}(x)$ for all $j\in \mathbb N$. Since the $k$-th derivative of $f$ is bounded around $0$, this implies $f^{(k)}\equiv 0$, so that $f$ must be a polynomial of degree at most $k-1$.

We turn now to the case $A(x,y) = (\lambda x + y, \lambda y)$; as before we can assume $0<\lambda\leq 1$ (this entails passing to $A^2$ or $A^{-2}$ if needed and then conjugating the resulting linear transformation again to its Jordan form). The powers of $A$ can be directly computed as $A^j(x,y) = (\lambda^j x + j\lambda^{j-1}y, \lambda^j y)$ for all $j\in \mathbb Z$.
Since the $\lambda$-eigenspace of $A$ is generated by $\partial/\partial x$, we must have $\gamma = \{y = f(x)\}$ with $f(0)=f'(0)=0$. The local invariance equation is 
\begin{equation}\label{inveq}
f(\lambda x + f(x)) = \lambda f(x).
\end{equation}
As usual, we first suppose that $0<\lambda<1$. In this situation, considering the expression of the powers of $A$ it is easy to check that $A^{j}(p) \to 0$ as $j\to +\infty$ for all $p\in \mathbb R^2$, uniformly on compact subsets. It follows that, defining $\rho(x) = \lambda x + f(x)$ for $x\in \mathbb R$, $\rho^{\circ j}(x)\to 0$ (also uniformly on compact subsets of $\mathbb R$) as $j\to \infty$; the action of $\rho$ represents in fact the projection to $\mathbb R_x$ of the restriction of the action of $A$ to $\gamma$.

Differentiating (\ref{inveq}) two times, we obtain
\begin{equation}\label{prime}
f'(\lambda x + f(x) )\cdot (\lambda + f'(x)) = \lambda f'(x),
\end{equation}
\begin{equation}\label{doubleprime}
f''(\lambda x + f(x))\cdot (\lambda + f'(x))^2 + f'(\lambda x + f(x))f''(x) = \lambda f''(x).
\end{equation}
Note that, since $f'(0) = 0$, if $x$ is close enough to $0$ the term $\lambda + f'(x)$ does not vanish. We can thus solve for $f'(\lambda x + f(x))$ in (\ref{prime}) and replace it in (\ref{doubleprime}):
\[ f''(\lambda x + f(x))\cdot (\lambda + f'(x))^3 + \lambda f'(x)f''(x) = \lambda(\lambda + f'(x)) f''(x) \Rightarrow\]
\begin{equation}\label{replace}
\Rightarrow f''(\lambda x + f(x)) = f''(x) \frac{\lambda^2}{(\lambda + f'(x))^3}.
\end{equation}
Let $V$ be a neighborhood of $0$ in $\mathbb R$ such that $\frac{\lambda^2}{(\lambda + f'(x))^3} > C > 1$ for all $x\in V$; the choice of such a neighborhood is possible because, since $f'(0)=0$, $\frac{\lambda^2}{(\lambda + f'(x))^3} \to 1/\lambda > 1$ as $x\to 0$. From (\ref{replace}) we get that $|f''(\rho(x))| \geq C |f''(x)|$ for all $x\in V$. Since $\rho^{\circ j}\to 0$ uniformly, we can also choose a neighborhood $V'$ of $0$ such that $\rho^{\circ j}(V')\subset V$ for all $j\in \mathbb N$. Suppose now that there exists $x_0\in V'$ such that $f''(x_0)\neq 0$, and let $x_j = \rho^{\circ j}(x_0)$, $j\in \mathbb N$. Then $\{x_j\}_{j\in \mathbb N}\subset V$ and since $|f''(x_{j+1})| \geq C |f''(x_j)|$ for all $j\in \mathbb N$ we have $|f''(x_j)|\to \infty$, a contradiction. It follows that $f''\equiv 0$ on $V'$ (and since $f(0)=f'(0)=0$, we have in fact $f\equiv 0$).

Last, assume that $\lambda=1$, i.e.\ $A(x,y) = (x+y, y)$. The powers of $A$ are given by $A^j(x,y) = (x + j y, y)$ for all $j\in \mathbb Z$. Using (\ref{inveq}) with $\lambda = 1$ and iterating, we get that 
\begin{equation} \label{inveq1}
f(x + j f(x)) = f(x)
\end{equation}
 for all $j\in \mathbb Z$.  Supposing that $f\not\equiv 0$ in a neighborhood of $0$, we can choose a sequence $\{x'_j\}_{j\in \mathbb N}$ such that $x'_j\to 0$ and $f(x'_j)\neq 0$ for all $j\in \mathbb N$; since $f(0)=0$, we also have $f(x'_j)\to 0$ as $j\to \infty$. Fix now $x_0\in \mathbb R$ (close enough to $0$). We define a sequence $\{x_j\}_{j\in \mathbb N}$ as
\[ x_j = x'_j + \left \lfloor \frac{x_0 - x'_j}{f(x'_j)}\right \rfloor f(x'_j) \]
where $\lfloor \cdot \rfloor:\mathbb R\to \mathbb Z$ is the floor function $\lfloor x \rfloor = \max\{k\in \mathbb Z : k\leq x \}$. It is readily seen that $|x_0 - x_j|\leq |f(x'_j)|$ for all $j\in \mathbb N$; by construction, this implies $x_j\to x_0$ as $j\to \infty$. Moreover, by (\ref{inveq1}) follows $f(x_j) = f(x'_j)$ for any $j$. Hence $f(x_0) = \lim_{j\to \infty} f(x_j) = \lim_{j\to \infty} f(x'_j) = 0$, a contradiction. We conclude that, also in this case, $f\equiv 0$ in a neighborhood of $0$.
\end{proof}

\begin{remark}\label{cannot}
We note that, in this case, the regularity assumption on $\gamma$ cannot be weakened to $C^k$ for some fixed $k\in \mathbb N$. Indeed, if for example the linear map $A$ is diagonalizable with eigenvalues $0<\lambda_1,\lambda_2<1$ satisfying $\lambda_1^k\geq \lambda_2$, it is easy to construct invariant curves of class $C^k$ for $A$ which are not real-analytic outside $0$. However, given a linear map $A$ there exists $k_0\in \mathbb N$ such that every invariant curve for $A$ of class $C^{k_0}$ is real-analytic.
\end{remark}
As in the previous cases, we obtain as a consequence Theorem \ref{linhomo} (which, however, can be proved by more elementary means).


\section{Locally closed subsets}\label{lcs}
Let $K\subset \mathbb R^n$ be a locally closed subset; from \cite{RSS}, \cite{Sk} follows that if $K$ is $C^1$-homogeneous then it is a submanifold of class $C^1$. We wish to give a relatively simple argument showing that this result implies that the same is true for $C^r$-homogeneity, $r\in \mathbb N$ (and thus also for $C^\infty$-homogeneity), see Theorem \ref{Wilkin}.  Assuming Theorem \ref{Wilkin} for $r=1$, it is enough to prove the statement:

\emph{Any $C^1$ submanifold which is $C^r$-homogeneous is of class $C^r$.}
 
 \noindent The inductive procedure we use is analogous to the one employed in \cite{Wi}; however, our argument to prove the claim above relies on the results in \cite{RSS}, \cite{Sk} as the basis for the induction (while in \cite{Wi} that claim is proved in a self-contained way) and thus is somewhat simpler.

\subsection*{Proof of Theorem \ref{Wilkin} for $r>1$}

\

 Let $M\subset \mathbb R^n$ be an $m$-dimensional submanifold of $\mathbb R^n$ of class $C^1$. Up to a linear transformation, we can choose coordinates  $(x,y)$ for $\mathbb R^n \cong \mathbb R^m(x)\times \mathbb R^{n-m}(y)$, $x = (x_1,\ldots,x_m)$, $y=(y_1,\ldots,y_{n-m})$, such that $0\in M$ and $T_0(M)= \mathbb R^m(x)$. By the implicit function theorem, there exists a neighborhood $\mathcal N$ of $0$ in $\mathbb R^m$ and a map $f:\mathcal N\to \mathbb R^{n-m}$, of class $C^1$, such that $M\cap (\mathcal N\times \mathbb R^{n-m})=\{(x,y)\in \mathcal N\times \mathbb R^{n-m}: y=f(x)\}$. Since the arguments are all local, from now on we restrict without further mention to this neighborhood of $0$.

Let $Gr(m,\mathbb R^n)$ be the Grassmannian of $m$-dimensional subspaces of $\mathbb R^n$. A chart for $Gr(m,\mathbb R^n)$ in a neighborhood of $T_0(M)= \mathbb R^m(x)$ is given by the space of real $(n-m)\times m$ matrices; we denote by $\xi = (\xi_{j\ell})_{1\leq \ell\leq m}^{1\leq j \leq n-m}$ the local coordinates for this chart. For any point $x_0\in \mathbb R^m$, denote by $df(x_0)$ the $(n-m)\times m$ Jacobian matrix giving the differential of $f$ at $x_0$. Then the tangent space $T_p(M)$ of $M$ at $p=(x_0,f(x_0))$ is the $m$-dimensional linear subspace $\{y=df(x_0)x\}$, hence the coordinates of $T_p(M)$ in the chart for $Gr(m,\mathbb R^n)$ introduced above are given by the matrix $df(x_0)$.

We define the \emph{prolongation} $\mathcal M$ of $M$ as the subset of $\mathbb R^n \times Gr(m,\mathbb R^n)$ defined by $\mathcal M = \{(p,T_p M):p\in M\} \subset \mathbb R^n \times Gr(m,\mathbb R^n)$. 
By the previous paragraphs we have 
\[\mathcal M \cap (\mathcal N\times \mathbb R^{n-m}\times Gr(n,\mathbb R^n))  = \{(x,y,\xi)\in \mathcal N\times \mathbb R^{n-m}\times Gr(n,\mathbb R^n) : y = f(x), \xi=df(x)\}.\] 
Since $f$ is of class $C^1$, it follows that around $0$ $\mathcal M$ is the graph of the continuous map $(f,df):\mathbb R^m\to \mathbb R^{n-m}\times Gr(m,\mathbb R^n)$, therefore it is a locally closed subset of $\mathbb R^n\times Gr(m,\mathbb R^n)$. 

Let $\pi$ be the projection $\pi: \mathbb R^n \times Gr(m,\mathbb R^n) \to \mathbb R^m$, $\pi(x,y,\xi) = x$, and let $\pi|_{\mathcal M}$ be the restriction $\pi|_{\mathcal M}:\mathcal M \to \mathbb R^m$.

\begin{lemma}\label{shortrem}
The map $\pi|_{\mathcal M}:\mathcal M \to \mathbb R^m$ is a (local) homeomorphism.
\end{lemma}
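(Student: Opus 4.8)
The plan is to exploit the fact, already established in the preceding discussion, that near $0$ the prolongation $\mathcal M$ is precisely the graph of the continuous map $g=(f,df):\mathcal N\to \mathbb R^{n-m}\times Gr(m,\mathbb R^n)$. Once this is in hand, the assertion reduces to the elementary observation that the graph of a continuous map is homeomorphic to its domain via the coordinate projection, so no serious analysis is required.

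First I would record the continuity of $g$. Since $M$ is of class $C^1$, the function $f$ is continuously differentiable, so both $f$ and its differential $df$ depend continuously on $x$; hence the pair $(f,df)$ is continuous. Consequently the graph map $\Gamma:\mathcal N\to \mathbb R^n\times Gr(m,\mathbb R^n)$, $\Gamma(x)=(x,f(x),df(x))$, is continuous, and by the coordinate description of $\mathcal M$ given above it is a bijection of $\mathcal N$ onto $\mathcal M$ in the neighborhood under consideration.

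Next I would check that $\pi|_{\mathcal M}$ and $\Gamma$ are mutually inverse. Indeed $\pi(\Gamma(x))=x$ for every $x\in\mathcal N$, so $\pi|_{\mathcal M}\circ\Gamma=\mathrm{Id}_{\mathcal N}$; conversely, every point of $\mathcal M$ has the form $(x,f(x),df(x))$ for a unique $x$, whence $\Gamma\circ\pi|_{\mathcal M}=\mathrm{Id}_{\mathcal M}$. As $\pi|_{\mathcal M}$ is the restriction of the continuous projection $\pi$ and $\Gamma$ is continuous, $\pi|_{\mathcal M}$ is a continuous bijection of $\mathcal M$ onto the open set $\mathcal N\subset\mathbb R^m$ whose inverse $\Gamma$ is also continuous. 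Thus $\pi|_{\mathcal M}$ is a homeomorphism onto its image and, in particular, a local homeomorphism.

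There is essentially no obstacle in this argument: the single point that must be handled with care is the continuity of the map $x\mapsto df(x)$, which is exactly the hypothesis that $M$ is of class $C^1$. (Were $f$ merely differentiable without continuity of its derivative, the map $\Gamma$ could fail to be continuous and $\mathcal M$ would not even be locally closed.) All the substantive work has already been carried out in identifying $\mathcal M$ with the graph of $(f,df)$.
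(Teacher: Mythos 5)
Your proof is correct and follows essentially the same route as the paper: the restriction of the continuous projection $\pi$ is continuous, and the graph map $x\mapsto(x,f(x),df(x))$, continuous because $f$ is of class $C^1$, serves as its (local) continuous inverse. The only difference is that you spell out the bijectivity check explicitly, which the paper leaves implicit.
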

\begin{proof}
Since $\pi: \mathbb R^n \times Gr(m,\mathbb R^n) \to \mathbb R^m$ is continuous, the restriction $\pi|_{\mathcal M}:\mathcal M \to \mathbb R^m$ is also continuous. Furthermore, by construction $\pi|_{\mathcal M}$ admits as inverse the continuous map $\mathbb R^m\to \mathcal M$ given by $x\to (x,f(x),df(x))$, defined around $0$.
\end{proof}

Suppose now that $M$ is a submanifold of class $C^{k+1}$ ($k\geq 1$). Then the map $f:\mathbb R^m\to \mathbb R^{n-m}$ is in turn of class $C^{k+1}$, and $(f,df):\mathbb R^m\to \mathbb R^{n-m}\times Gr(m,\mathbb R^n)$ is of class $C^k$: hence in this case $\mathcal M$ is a ($m$-dimensional) submanifold of class $C^k$.

The converse statement that $M$ is of class $C^{k+1}$ if $\mathcal M$ is $C^k$ is not true: $\mathcal M$ can be a real-analytic submanifold even if $M$ is not more than $C^1$-smooth (for example $M = \{y = x^{4/3}\}\subset \mathbb R^2 $, see the remark after Lemma 7.1 in \cite{Wi}). However, the converse is generically true in the following sense:

\begin{lemma}\label{Ckplus1} Suppose that $\mathcal M$ is a submanifold of class $C^k$, $k\geq 1$. Then there is a non-empty  open set $U\subset M$ such that $M\cap U$ is of class $C^{k+1}$.
\end{lemma}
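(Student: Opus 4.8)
The plan is to locate an open piece of $M$ on which the projection $\pi|_{\mathcal M}$ is not merely a homeomorphism but a genuine $C^k$ diffeomorphism, and then to read off the improved regularity of $f$ from the $C^k$-regularity of the inverse of this projection.

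First I would record the regularity of $\pi|_{\mathcal M}$ itself. Since $\pi$ is linear (hence smooth) and $\mathcal M$ is by hypothesis a $C^k$ submanifold, the restriction $\pi|_{\mathcal M}:\mathcal M\to\mathbb R^m$ is a $C^k$ map between $m$-dimensional $C^k$ manifolds; by Lemma \ref{shortrem} it is moreover a homeomorphism onto an open subset $\Omega\subset\mathbb R^m$ (a neighborhood of $0$, so of positive Lebesgue measure).

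The key step is to produce a \emph{regular} point for this map. Let $\Sigma\subset\mathcal M$ be the closed set of points at which $d(\pi|_{\mathcal M})$ is singular. Since $\dim\mathcal M=\dim\mathbb R^m=m$ and $k\geq 1$, Sard's theorem applies and yields that the set of critical values $\pi(\Sigma)$ has measure zero. As $\Omega=\pi(\mathcal M)$ is open and nonempty, the set $\Omega\setminus\pi(\Sigma)$ is nonempty; because $\pi|_{\mathcal M}$ is injective, any $x$ in this set has a unique preimage $q\in\mathcal M\setminus\Sigma$, i.e.\ a regular point. By the inverse function theorem, $\pi|_{\mathcal M}$ is then a $C^k$ diffeomorphism from a suitable neighborhood $W$ of $q$ in $\mathcal M$ onto an open set $V\subset\mathbb R^m$. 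Composing the $C^k$ inverse $(\pi|_{\mathcal M})^{-1}:V\to W$ with the $C^k$ inclusion $\mathcal M\hookrightarrow\mathbb R^n\times Gr(m,\mathbb R^n)$ shows that the map $x\mapsto\bigl(x,f(x),df(x)\bigr)$ is of class $C^k$ on $V$. Reading off the $Gr(m,\mathbb R^n)$-component in the chart coordinates $\xi$, this says precisely that $df$ is of class $C^k$ on $V$, hence $f$ is of class $C^{k+1}$ there. Taking $U$ to be the portion of $M$ lying over $V$ gives the desired nonempty open set on which $M$ is $C^{k+1}$.

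The main obstacle is exactly the middle step. A priori the continuous parametrization $x\mapsto\bigl(x,f(x),df(x)\bigr)$ of $\mathcal M$ is only $C^0$, so one cannot invert $\pi|_{\mathcal M}$ smoothly at every point; indeed the projection may genuinely fail to be a diffeomorphism where $M$ is merely $C^1$ (as in the example $M=\{y=x^{4/3}\}$ mentioned above). What saves the argument is that, by Sard's theorem, this failure is confined to a set of $x$-values of measure zero, which is also the reason the conclusion can only be the existence of \emph{some} open set $U$ rather than improved regularity everywhere.
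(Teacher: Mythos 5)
Your proof is correct, but it reaches the key point by a different route than the paper. Both arguments hinge on producing a single point $q_0\in\mathcal M$ at which $d(\pi|_{\mathcal M})$ is nonsingular, and from there the endgame is the same: invert (you via the inverse function theorem applied directly to $\pi|_{\mathcal M}$, the paper via the implicit function theorem applied to a $C^k$ defining function $\rho$ of $\mathcal M$, solving for $(y,\xi)$ in terms of $x$), identify the resulting $C^k$ graph map with $x\mapsto(f(x),df(x))$, and conclude $df\in C^k$, hence $f\in C^{k+1}$. Where you differ is in producing $q_0$: you invoke Sard's theorem for the $C^k$ map $\pi|_{\mathcal M}$ between $m$-dimensional manifolds (valid for $k\geq 1$ since source and target have equal dimension) to find a regular value, whose preimage under the injective projection is a regular point; the paper instead takes the open set $W\subset\mathcal M$ where $\dim\ker d(\pi|_{\mathcal M})$ attains its minimum $d$, applies the constant rank theorem there, and observes that $d\geq 1$ would force positive-dimensional fibers, contradicting that $\pi|_{\mathcal M}$ is a local homeomorphism (Lemma \ref{shortrem}). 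Your measure-theoretic argument is shorter and makes transparent why the conclusion is only generic -- the failure set of $x$-values is null -- while the paper's rank argument is purely differential-topological, avoiding measure theory, and its semicontinuity structure localizes immediately to yield the density of $U$ noted in the remark after the lemma (though Sard localizes just as well, since regular values are co-null in every open subset). One small point you assert without justification is that $\dim\mathcal M=m$: the hypothesis only says $\mathcal M$ is a $C^k$ submanifold of unspecified dimension, and as the paper notes this follows from Lemma \ref{shortrem} by invariance of domain; it is a one-line fix, not a genuine gap.
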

\begin{proof}
 By Lemma \ref{shortrem} $\mathcal M$ must be $m$-dimensional, since $\pi|_{\mathcal M}:\mathcal M \to \mathbb R^m$ is a local homeomorphism. In what follows, to simplify the notation we write $\pi$ in place of $\pi|_{\mathcal M}$.
 
  Put $d=\min\{\dim\ker \pi|_{T_q{\mathcal M}}: q\in \mathcal M \}$, and let $W \subset \mathcal M$ be the set $W=\{q\in \mathcal M:\dim\ker \pi|_{T_q{\mathcal M}}=d\}$. Then $W$ is an open subset of $\mathcal M$ and the restriction of $\pi$ to $W$ has constant rank $m-d$. By the rank theorem, for any $q\in W$ the fiber $\pi^{-1}(\pi(q))$ of $\pi$ through $q$ is locally a manifold of dimension $d$. Should we have $d\geq 1$, this would contradict the fact that $\pi$ is a local homeomorphism: it follows that $d=0$.
  
Choose now $q_0\in W$, so that $\ker \pi|_{T_{q_0}{\mathcal M}}=\{0\}$, and let $x_0 = \pi(q_0)$. Since $\mathcal M$ is an $m$-dimensional manifold of class $C^k$, there exist a neighborhood $\mathcal V'$ of $q_0$ in $\mathbb R^n \times Gr(m,\mathbb R^n)$ and a vector-valued $C^k$ defining function $\rho:\mathcal V'\to \mathbb R^{n-m+(n-m)m}$ such that $\mathcal M\cap\mathcal V'=\{\rho=0\}$. The differential $d_{(y,\xi)}\rho(q_0)$ of $\rho$ with respect to the variables $(y,\xi)$ at $q_0$ is represented by a $(n-m+(n-m)m) \times (n-m+(n-m)m)$ square matrix which is invertible. Indeed, any element of the kernel of $d_{(y,\xi)}\rho(q_0)$ is a vector $v$  belonging to the subspace spanned by the variables $(y,\xi)$ such that $v\in T_{q_0}\mathcal M$. By definition of $\pi$ we also have $v\in\ker \pi|_{T_{q_0}{\mathcal M}}$, and since this space is trivial it follows that $v=0$.

Since $d_{(y,\xi)}\rho(q_0)$ is invertible, we can apply the implicit function theorem to obtain neighborhoods $V$ of $x_0$ in $\mathbb R^m$ and $\mathcal V$ of $q_0$ in $\mathbb R^n \times Gr(m,\mathbb R^n)$, and a map $F:V\to \mathbb R^{n-m} \times Gr(m,\mathbb R^n)$ of class $C^k$ such that $\mathcal M\cap \mathcal V=\{(y,\xi)=F(x)\}$. In other words, $\mathcal M$ can be locally expressed as the graph of a map $F$ of class $C^k$ defined around $x_0$. However we already know that $\mathcal M$ is also written as the graph of the map $\mathbb R^m\ni x\to (f(x),df(x))\in \mathbb R^{n-m} \times Gr(m,\mathbb R^n)$, i.e.\ $\mathcal M=\{(y,\xi)=(f(x),df(x))\}$. It follows that $F(x) = (f(x),df(x))$ for $x\in V$. Since $F$ is of class $C^k$, it follows in particular that $df$ is of class $C^k$ on $V$, which implies that $f$ is of class $C^{k+1}$ around $x_0$.
\end{proof}
\begin{remark} In fact, the open set $U$ in Lemma \ref{Ckplus1} is dense in $M$.
\end{remark}
To prove Theorem \ref{Wilkin}, we will now prove inductively the claim

\

\noindent {\bf P($m,n,k$):} An $m$-dimensional submanifold of $\mathbb R^n$ of class $C^1$ which is $C^k$-homogeneous is of class $C^k$.

\

Assume that {\bf P($m,n,k$)}  is true for all $k\leq k_0$ ($k_0\geq 1$), $m,n\in \mathbb N$, and let $M\subset \mathbb R^n$ be an $m$-dimensional $C^{k_0+1}$-homogeneous submanifold of class $C^1$. As observed above, the prolongation $\mathcal M\subset \mathbb R^n \times Gr(m,\mathbb R^n)$ is a locally closed subset. We will now check that 

\begin{lemma}
$\mathcal M$ is $C^{k_0}$-homogeneous.
\end{lemma}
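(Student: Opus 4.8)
The plan is to lift each local automorphism of $M$ to a local automorphism of its prolongation $\mathcal M$, using the canonical action of a diffeomorphism on the Grassmannian bundle. Fix two points $P=(p,T_pM)$ and $Q=(q,T_qM)$ of $\mathcal M$. By the $C^{k_0+1}$-homogeneity of $M$ there are a neighborhood $U$ of $p$ in $\mathbb R^n$ and a $C^{k_0+1}$-diffeomorphism $\psi:U\to\psi(U)$ with $\psi(p)=q$ and $\psi(U\cap M)=M\cap\psi(U)$. I would then define its \emph{prolongation}
\[
\Psi:U\times Gr(m,\mathbb R^n)\to \psi(U)\times Gr(m,\mathbb R^n),\qquad \Psi(x,V)=\bigl(\psi(x),\,d\psi_x(V)\bigr),
\]
where $d\psi_x(V)$ is the image of the $m$-plane $V\subset\mathbb R^n$ under the invertible linear map $d\psi_x$. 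Since $d\psi_x$ is invertible at every point, the fiber map $V\mapsto d\psi_x(V)$ is a diffeomorphism of $Gr(m,\mathbb R^n)$ with inverse $V\mapsto (d\psi_x)^{-1}(V)$; functoriality of the prolongation then shows that $\Psi$ is a local diffeomorphism of the manifold $\mathbb R^n\times Gr(m,\mathbb R^n)$ whose inverse is precisely the prolongation of $\psi^{-1}$, defined on a neighborhood of $P$.

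The regularity count is the crux, and is the one step that deserves care. Read in the matrix charts for $Gr(m,\mathbb R^n)$ introduced above, the coordinates of the image plane $d\psi_x(V)$ are a smooth (indeed rational) function of the entries of the Jacobian $d\psi_x$ and of the coordinates of $V$. Because $\psi$ is of class $C^{k_0+1}$, the map $x\mapsto d\psi_x$ is of class $C^{k_0}$ and no better; hence, although $\psi$ itself is $C^{k_0+1}$, the prolongation $\Psi$ is only of class $C^{k_0}$. This single loss of one derivative—forced by the appearance of $d\psi$—is exactly what makes the inductive scheme close up, passing from $C^{k_0+1}$-homogeneity of $M$ to $C^{k_0}$-homogeneity of $\mathcal M$.

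It then remains to verify that $\Psi$ preserves $\mathcal M$ and carries $P$ to $Q$. If $(x,T_xM)\in\mathcal M$ with $x\in U\cap M$, then $\psi(x)\in M$, and since $\psi$ maps $M$ into $M$ its differential carries $T_xM$ onto $T_{\psi(x)}M$; thus $\Psi(x,T_xM)=(\psi(x),T_{\psi(x)}M)\in\mathcal M$, and the reverse inclusion follows by applying the same argument to $\psi^{-1}$. In particular $\Psi(P)=\bigl(\psi(p),d\psi_p(T_pM)\bigr)=(q,T_qM)=Q$. Thus $\Psi$ is a $C^{k_0}$-diffeomorphism of a neighborhood of $P$ preserving $\mathcal M$ and taking $P$ to $Q$. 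Since $C^{k_0}$-homogeneity is a local notion, invariant under the real-analytic chart transitions of $\mathbb R^n\times Gr(m,\mathbb R^n)$, reading $\Psi$ in the matrix charts exhibits it as the required local $C^{k_0}$ equivalence, and therefore $\mathcal M$ is $C^{k_0}$-homogeneous.
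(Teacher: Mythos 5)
Your proposal is correct and follows essentially the same route as the paper: you lift the local $C^{k_0+1}$ equivalence $\psi$ of $M$ to the Grassmannian bundle via the action of $d\psi$ on $m$-planes, note the one-derivative loss (the paper makes this explicit by writing the fiber action in the matrix chart as $(h_y-\xi g_y)^{-1}(\xi g_x - h_x)$ in terms of the partials of $\psi^{-1}$, which depends $C^{k_0}$-smoothly on the base point), and verify invariance of $\mathcal M$ together with $\Psi(P)=Q$ exactly as the paper does.
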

\begin{proof}
Let $q_1,q_2\in \mathcal M$: then $q_1=(p_1,T_{p_1}M), q_2=(p_2,T_{p_2}M)$ for certain points $p_1,p_2\in M$. Since $M$ is $C^{k_0+1}$-homogeneous, there are neighborhoods $V_1,V_2$ of $p_1,p_2$ in $\mathbb R^n$ and a $C^{k_0+1}$-smooth diffeomorphism $\psi:V_1\to V_2$ such that $\psi(M\cap V_1)=M\cap V_2$ and $\psi(p_1)=p_2$.

We prolong now $\psi$ to a diffeomorphism $\widetilde\psi:V_1\times Gr(m,\mathbb R^n)\to V_2\times Gr(m,\mathbb R^n)$ by using the action of the differential $d\psi$ on the $m$-dimensional subspaces of $\mathbb R^n$. More precisely, write the components of $\psi^{-1}$ as $\psi^{-1}(x,y)=(g(x,y),h(x,y))$ (where $g:V_2\to \mathbb R^m(x)$ and $h:V_2\to \mathbb R^{n-m}(y)$ are maps of class $C^{k_0+1}$). At any point $(x,y)\in V_2$ the differentials $g_x(x,y),g_y(x,y),h_x(x,y),h_y(x,y)$ can be represented as matrices of dimension, respectively, $m\times m, m\times (n-m), (n-m)\times m$ and $(n-m)\times (n-m)$. We can write $\widetilde\psi:V_1\times Gr(m,\mathbb R^n)\to V_2\times Gr(m,\mathbb R^n)$ in the coordinates $(x,y,\xi)$ of our local chart as
\[\widetilde \psi(x,y,\xi)=\left(\psi(x,y), (h_y(\psi(x,y))  - \xi g_y(\psi(x,y)))^{-1}(\xi g_x(\psi(x,y)) - h_x(\psi(x,y))) \right);\]
here the map $(h_y  - \xi g_y)^{-1}(\xi g_x - h_x)$ acts rationally on the local chart of  $Gr(m,\mathbb R^n)$ (and extends as an algebraic diffeomorphism $Gr(m,\mathbb R^n)\to Gr(m,\mathbb R^n)$) for any fixed $(x,y)\in V_2$, and it depends $C^{k_0}$-smoothly on $(x,y)$. It follows that $\widetilde\psi:V_1\times Gr(m,\mathbb R^n)\to V_2\times Gr(m,\mathbb R^n)$ is a diffeomorphism of class $C^{k_0}$.

Since $\psi$ maps $M$ into $M$, it follows that for any $p\in M\cap V_1$ the differential of $\psi$ at $p$ sends $T_pM\in Gr(m,\mathbb R^n)$ to $T_{\psi(p)}M\in Gr(m,\mathbb R^n)$, so that $\widetilde\psi(p,T_pM)=(\psi(p),T_{\psi(p)}M)$. This shows that $\widetilde \psi(\mathcal M \cap (V_1\times Gr(m,\mathbb R^n)))\subset \mathcal M \cap (V_2\times Gr(m,\mathbb R^n))$, and $\widetilde \psi(q_1)=q_2$. On the other hand let $q\in \mathcal M \cap (V_2\times Gr(m,\mathbb R^n))$, $q=(p,T_pM)$ with $p\in M\cap V_2$. Then $p'=\psi^{-1}(p)\in M\cap V_1$, $q'=(p',T_{p'}M)\in \mathcal M \cap (V_1\times Gr(m,\mathbb R^n))$ and $\widetilde\psi(q') =(\psi(p'),T_{\psi(p')}M)=(p,T_pM)=q$. This shows that in fact $\widetilde \psi(\mathcal M \cap (V_1\times Gr(m,\mathbb R^n)))= \mathcal M \cap (V_2\times Gr(m,\mathbb R^n))$.

Since $q_1,q_2$ were arbitrary points of $\mathcal M$, we conclude that $\mathcal M$ is $C^{k_0}$-homogeneous.
\end{proof}
By the previous lemma, since $\mathcal M$ is locally closed, applying \cite{RSS}, \cite{Sk} we have that $\mathcal M$ is a submanifold of class $C^1$. The inductive assumption {\bf P($m,n + m(n-m),k_0$)} then implies that $\mathcal M$ is of class $C^{k_0}$. By Lemma \ref{Ckplus1} there is an open set $U\neq \emptyset$ such that $M\cap U$ is of class $C^{k_0+1}$: by homogeneity, then, $M$ is of class $C^{k_0 + 1}$ everywhere, which proves {\bf P($m,n,k_0+1$)} and concludes the proof of Theorem \ref{Wilkin}.

\

As a corollary of Theorem \ref{Wilkin} and of Proposition \ref{Ghom}, we immediately get Theorem \ref{disopcur}. In particular, the result applies to the subgroups considered in section \ref{aninvcur}, i.e.\ linear maps, analytic shears and holomorphic transformations.


\

\

\noindent {\bf Acknowledgments.}\  The author is very grateful to Bernhard Lamel for many discussions related to the topic of this paper; in particular, among other things, for pointing to Cartan's work \cite{Ca}.


\begin{bibdiv}
\begin{biblist}

\bib{BER}{book}{
   author={Baouendi, M. S.},
   author={Ebenfelt, P.},
   author={Rothschild, L. P.},
   title={Real submanifolds in complex space and their mappings},
   series={Princeton Mathematical Series},
   volume={47},
   publisher={Princeton University Press},
   place={Princeton, NJ},
   date={1999},
   pages={xii+404},
   isbn={0-691-00498-6},
   review={\MR{1668103 (2000b:32066)}},
}

\bib{BG}{article}{
   author={Bernal-Gonz{\'a}lez, L.},
   title={Lineability of sets of nowhere analytic functions},
   journal={J. Math. Anal. Appl.},
   volume={340},
   date={2008},
   number={2},
   pages={1284--1295},
   issn={0022-247X},
   review={\MR{2390929 (2009d:46047)}},
   doi={10.1016/j.jmaa.2007.09.048},
}

\bib{Bo}{article}{
   author={Boas, R. P.},
   title={When is a $C^\infty$ function analytic?},
   journal={Math. Intelligencer},
   volume={11},
   date={1989},
   number={4},
   pages={34--37},
   issn={0343-6993},
   review={\MR{1016104 (91k:26023)}},
   doi={10.1007/BF03025882},
}

\bib{Br}{article}{
   author={Bracci, F.},
   title={Local holomorphic dynamics of diffeomorphisms in dimension one},
   conference={
      title={Five lectures in complex analysis},
   },
   book={
      series={Contemp. Math.},
      volume={525},
      publisher={Amer. Math. Soc.},
      place={Providence, RI},
   },
   date={2010},
   pages={1--42},
   review={\MR{2683218 (2011m:37077)}},
   doi={10.1090/conm/525/10363},
}

\bib{Ca}{article}{
author={Cartan, H.},
title={Sur les groupes de transformations analytiques},
journal={Expos\'es Math\'ematiques publi\'es \`a la m\'emoire de Jacques Herbrand, Hermann, Paris,},
volume={IX},
pages={1-53},
date={1935}
}

\bib{Ca2}{book}{
   author={Cartan, H.},
   title={Sur les classes de fonctions d\'efinies par des in\'egalit\'es
   portant sur leurs d\'eriv\'ees successives},
   language={French},
   series={Actual. Sci. Ind., no. 867},
   publisher={Hermann et Cie., Paris},
   date={1940},
   pages={36},
   review={\MR{0006352 (3,292b)}},
}

\bib{De}{article}{
   author={Della Sala, G.},
   title={Nowhere analytic smooth curves with non-trivial analytic isotropy},
   journal={preprint},
   date={2013}
}

\bib{De2}{article}{
   author={Della Sala, G.},
   title={Analytic transformations of smooth submanifolds},
   journal={preprint},
   date={2013}
}
   
\bib{Ll}{article}{
   author={Lloyd, N. G.},
   title={Remarks on generalising Rouch\'e's theorem},
   journal={J. London Math. Soc. (2)},
   volume={20},
   date={1979},
   number={2},
   pages={259--272},
   issn={0024-6107},
   review={\MR{551453 (80m:32029)}},
   doi={10.1112/jlms/s2-20.2.259},
}

\bib{RSS}{article}{
   author={Repov{\v{s}}, D.},
   author={Skopenkov, A. B.},
   author={{\v{S}}{\v{c}}epin, E. V.},
   title={$C^1$-homogeneous compacta in ${\bf R}^n$ are $C^1$-submanifolds of ${\bf R}^n$},
   journal={Proc. Amer. Math. Soc.},
   volume={124},
   date={1996},
   number={4},
   pages={1219--1226},
   issn={0002-9939},
   review={\MR{1301046 (97f:58008)}},
   doi={10.1090/S0002-9939-96-03157-7},
}

\bib{Sk}{article}{
   author={Skopenkov, A. B.},
   title={A characterization of submanifolds by a homogeneity condition},
   journal={Topology Appl.},
   volume={154},
   date={2007},
   number={9},
   pages={1894--1897},
   issn={0166-8641},
   review={\MR{2319261 (2008f:57034)}},
   doi={10.1016/j.topol.2007.03.002},
}

\bib{Wi}{article}{
   author={Wilkinson, Amie},
   title={The cohomological equation for partially hyperbolic
   diffeomorphisms},
   language={English, with English and French summaries},
   journal={Ast\'erisque},
   number={358},
   date={2013},
   pages={75--165},
   issn={0303-1179},
   isbn={978-2-85629-778-0},
   review={\MR{3203217}},
}


\bib{Za}{article}{
   author={Zaitsev, D.},
   title={On different notions of homogeneity for CR-manifolds},
   journal={Asian J. Math.},
   volume={11},
   date={2007},
   number={2},
   pages={331--340},
   issn={1093-6106},
   review={\MR{2328898 (2008m:32065)}},
}

\end{biblist}
\end{bibdiv}

\end{document}